\newcommand{\R}{\ensuremath{\mathbb{R}}}
\newcommand{\N}{\ensuremath{\mathbb{N}}}
\newtheorem{theorem}{Theorem}[section]
\newtheorem{lemma}[theorem]{Lemma}
\newtheorem{proposition}[theorem]{Proposition}
\newtheorem{corollary}[theorem]{Corollary}
\theoremstyle{definition} 
\newtheorem{defn}{Definition}[section]
\newtheorem{Construction}{Construction}[section]
\newtheorem{remark}{Remark}[section] 
\newtheorem{Notation}{Notational Convention}[section]
\newcommand{\Th}{{\mathsf T}\!{\mathsf h}}
\newcommand{\MT}{{\mathsf M}{\mathsf T}}
\newcommand{\Mor}{\text{mor}}
\newcommand{\Spec}{\textbf{Spec}}
\newcommand{\Top}{\textbf{Top}}
\newcommand{\Ob}{\text{ob}}
\newcommand{\Grass}[3]{G(\Sigma_{k},{#2} , {#3}, {#1})}
\newcommand{\kset}[1]{\langle {#1} \rangle}
\newcommand{\kseto}[1]{\widehat{\langle {#1} \rangle}}
\newcommand{\Sig}[2]{\Sigma_{{#1}}^{{#2}}}
\newcommand{\Cob}{{\mathbf{Cob}}}
\DeclareMathOperator*{\tCofibre}{tCofibre}
\DeclareMathOperator*{\Cofibre}{Cofibre}
\DeclareMathOperator*{\colim}{colim}
\DeclareMathOperator*{\Diff}{Diff}
\DeclareMathOperator*{\BDiff}{BDiff}
\DeclareMathOperator*{\Maps}{Maps}
\DeclareMathOperator*{\Fib}{Fib}
\DeclareMathOperator*{\op}{op}
\author{Nathan Perlmutter}
\address{Department of Mathematics, University of Oregon, Eugene, OR,
  97403, USA} 
  \email{nperlmut@uoregon.edu}
\title[Cobordism categories of manifolds with
singularities]{Cobordism categories of manifolds \\ with Baas-Sullivan
  singularities, Part 2}
\begin{document}
\maketitle
  \begin{abstract} For a given list of closed manifolds $\Sigma_k=(P_1,\ldots,P_k)$, we construct a cobordism category $\mathbf{Cob}_{d}^{\Sigma_{k}}$ of manifolds with Baas-Sullivan singularities of type $\Sigma_k$. Our main results identify the homotopy type of the classifying space $B\mathbf{Cob}_{d}^{\Sigma_{k}}$, with that of the infinite loop-space of a certain spectrum, $\mathsf{MT}_{\Sigma_{k}}(d)$. Our results generalize of the work of Galatius, Madsen, Tillmann, and Weiss from \cite{GMTW 09}. It also generalizes the work of Genauer on the cobordism categories of manifolds with corners from \cite{G 08}.
    \end{abstract}

\section{Introduction and Statement of Main Results} 
\label{section: Introduction}

\subsection{Background and motivation} 
In \cite{GMTW 09}, Galatius, Madsen, Tillmann, and Weiss construct a
cobordism category $\mathbf{Cob}_{d}$ and identify the homotopy type
of its classifying space with that of the infinite loop-space
$\Omega^{\infty -1}\MT(d)$, where $\MT(d)$ is the Thom spectrum
associated to the inverse to the universal, $d$-dimensional vector
bundle $U_{d} \rightarrow BO(d)$. This fundamental result lead to a new
proof of the Mumford conjecture (given in \cite{GMTW 09}) and laid the foundations for the study of the moduli
spaces of smooth manifolds, see \cite{GRW 14}.

In \cite{P 13} we generalized the main result from \cite{GMTW 09} to the
case of manifolds with a single Baas-Sullivan singularity. 
In this paper we determine the homotopy type of a cobordism
category of manifolds with multiple Baas-Sullivan singularities. 
This work simultaneously generalizes the results from \cite{GMTW 09} and the work 
on cobordism categories of manifolds with corners from \cite{G 08}.

\subsection{Manifolds with Baas-Sullivan singularities}\label{motiv}
We fix a sequence $\Sigma=(P_{1}, \dots, P_{k}, \dots )$ of smooth,
closed manifolds and let $p_{i}$ denote the dimension of the manifold
$P_{i}$ for each $i \in \N$.  Then for $k \in \N$, we let $\Sigma_{k}$
denote the truncated list $(P_{1}, \dots, P_{k})$.  Following \cite{Ba
  73} and \cite{B 92}, a manifold with \textit{Baas-Sullivan}
singularities modeled on $\Sigma_{k}$ is a smooth manifold $W$
equipped with the following extra structure:
\begin{itemize} \itemsep.2cm
\item The boundary $\partial W$ is given a decomposition
$
\partial W \; = \; \partial_{0}W \cup \partial_{1}W \cup \cdots \cup
\partial_{k}W
$
with the property that for each subset $I \subseteq \{0, 1, \dots,
k\}$, the intersection
$\partial_{I}W \; := \; \bigcap_{i\in I}\partial_{i}W$ is a $(\dim(W)
-|I|)$-dimensional manifold, where $|I|$ is the order of
$I$.  
\item
For each subset $I \subseteq \{0, 1,
\dots, k\}$, the manifold $\partial_{I}W$ is equipped with a factorization,
$$\partial_{I}W \; =  \; \beta_{I}W\times P^{I},$$
where $P^{I} = \prod_{i\in I}P_{i}$ and $\beta_{I}W$ is a manifold of dimension, $\dim(W) - |I| - \sum_{i \in I}{p_{i}}$. 
\end{itemize}

The submanifold $\partial_{0}W$ is referred to as the \textit{boundary} of
the $\Sigma_{k}$-manifold $W$. 
A compact $\Sigma_{k}$-manifold $W$ is said to be \textit{closed} if $\partial_{0}W = \emptyset$.
Two closed $\Sigma_{k}$-manifolds $M_{1}$ and $M_{2}$ are said to be $\Sigma_{k}$-cobordant if there exists a compact $\Sigma_{k}$-manifold $W$ such that $\partial_{0}W = M_{1}\sqcup M_{2}$. 

We denote by $\Omega_{*}$ the graded cobordism group of unoriented
manifolds, and by $\Omega^{\Sigma_{k}}_{*}$ the graded cobordism group of
manifolds with \emph{Baas-Sullivan singularities of type}
$\Sigma_{k}$. We denote, $\Omega^{\Sigma_{0}}_{*} :=
\Omega_{*}$.
For varying
integer $k$, the groups $\Omega^{\Sigma_k}_{*}$ are related to each other
by the well-known \textit{Bockstein-Sullivan} exact couple:
\begin{equation} \label{eq: exact couple}
\xymatrix{
\Omega^{\Sigma_{k-1}}_{*} \ar[drr]_{i_k} &&&& 
\Omega^{\Sigma_{k-1}}_{*} \ar[llll]_{\times P_{k}}\\
&& \Omega^{\Sigma_{k}}_{*} \ar[urr]_{\beta_{k}} &&}
\end{equation}
The map $\times P_{k}$ is given by multiplication by the manifold
$P_{k}$ and is of degree $p_{k}$, $i_k$ is the map given by inclusion, and $\beta_{k}$ is the
degree $-(p_{k}+1)$ map given by sending a $\Sigma_{k}$-manifold
$W$ to the $\Sigma_{k-1}$-manifold $\beta_{k}W$.

We will construct a cobordism category of manifolds with Baas-Sullivan
singularities $\mathbf{Cob}_{d}^{\Sigma_{k}}$ (see also \cite{Ba 09}), and then proceed to determine
the homotopy-type of its classifying space, $B\mathbf{Cob}_{d}^{\Sigma_{k}}$. 
Roughly, the objects of $\mathbf{Cob}_{d}^{\Sigma_{k}}$ are given by embedded, closed, $\Sigma_{k}$-manifolds $M \subset \R^{\infty}$ of dimension $d-1$, and morphisms are given by embedded $\Sigma_{k}$-cobordisms 
$W \subset [0, 1]\times\R^{\infty}$ of dimension $d$, such that such that $\partial W  = W\cap(\{0, 1\}\times\R^{\infty})$ is an orthogonal intersection.
A more precise definition of the category $\mathbf{Cob}_{d}^{\Sigma_{k}}$ is given in Section \ref{section: the cobordism category} using important preliminary results about the differential topology of $\Sigma_{k}$-manifolds developed in the sections leading up to it. 
Following \cite{GMTW 09}, the category $\mathbf{Cob}_{d}^{\Sigma_{k}}$ is topologized in such a way so that there are homotopy equivalences,
 $$\xymatrix{
 \Ob(\mathbf{Cob}_{d}^{\Sigma_{k}}) \; \simeq \;
{\displaystyle \coprod_{[M]}}\BDiff_{^{\Sigma_{k}}}(M), \quad \quad
 \Mor(\mathbf{Cob}_{d}^{\Sigma_{k}}) \; \simeq \; \Ob(\mathbf{Cob}_{d}^{\Sigma_{k}})\amalg
{\displaystyle \coprod_{[W]}}\BDiff_{\Sigma_{k}}(W)
 },$$
  where $M$ varies over
 diffeomorphism classes of $(d-1)$-dimensional closed $\Sigma_{k}$-manifolds and $W$ varies over
 diffeomorphism classes of $d$-dimensional $\Sigma_{k}$-cobordisms. 
 The group $\Diff_{\Sigma_{k}}(W)$ is
 defined to be the group of all diffeomorphisms $g: W \longrightarrow W$ such that 
 $g(\partial_{I}W) = \partial_{I}W$ for all  subsets $I \subseteq \{0, \dots, k\}$, and with the
 additional property that
 the restriction of $g$ to $\partial_{I}W$ has the factorization
$g\mid_{\partial_{I}W} \; = \; g_{\beta_{I}}\times Id_{P^{I}}$, 
 where $g_{\beta_{I}}: \beta_{I}W \rightarrow \beta_{I}W$ is a
 diffeomorphism. 

 This construction is a generalization of the category $\mathbf{Cob}_{d}$ from \cite{GMTW 09} in the sense that $ \mathbf{Cob}^{\Sigma_{k}}_{d}$ is isomorphic (as a topological category) to $\mathbf{Cob}_{d}$ in the case that $\Sigma_{k} = (\emptyset, \dots, \emptyset)$. 
 Furthermore, in the case that $P_{i}$ is the single point space $\star$ for all $i \leq k$, then the category $\mathbf{Cob}^{\Sigma_{k}}_{d}$ coincides with the cobordism category of \textit{manifolds with corners} studied by Genauer in \cite{G 08}.
 
\subsection{First results}
We will need to compare the categories $\mathbf{Cob}_{d}^{\Sigma_{k}}$
to other similar cobordism categories consisting of manifolds with different singularity types. 
For a non-negative integer $\ell \leq k$, 
we denote by $\Sigma^{\ell}_{k}$ the list obtained from $\Sigma_{k}$ by replacing the manifold $P_{i}$ with a the single point space $\star$ if $i > \ell$, i.e.\ $\Sigma^{\ell}_{k} = (P_{1}, \dots, P_{\ell}, \star, \dots, \star)$. 
 Using these singularity lists, the categories
$\mathbf{Cob}_{d}^{\Sigma^{\ell}_{k}}$ are defined in the same way as before. 
The category $\mathbf{Cob}_{d}^{\Sigma^{0}_{k}}$ is the cobordism category of \textit{manifolds with corners} studied in \cite{G 08}. 
The categories $\mathbf{Cob}_{d}^{\Sigma^{\ell}_{k}}$ serve as intermediates between the category of manifolds with corners and the cobordism category of $\Sigma_{k}$-manifolds $\mathbf{Cob}_{d}^{\Sigma_{k}}$, which is our main object of interest.
For any pair of integers $\ell < k$ we have a commutative diagram,
\begin{equation} \label{equation: pull-back square}
\xymatrix{ 
\mathbf{Cob}_{d}^{\Sigma^{\ell+1}_{k}} \ar[d]^{\beta_{\ell+1}} \ar[rrr] &&& \mathbf{Cob}_{d}^{\Sigma^{\ell}_{k}}
  \ar[d]^{\partial_{\ell + 1}}\\ 
  \mathbf{Cob}_{d-p_{\ell+1}-1}^{\Sigma^{\ell}_{k-1}}
  \ar[rrr]^{\tau_{P_{\ell +1}}} &&&
  \mathbf{Cob}_{d-1}^{\Sigma^{\ell}_{k-1}}
  }
\end{equation}
where $\partial_{\ell+1}$
is the functor defined by sending a cobordism $W$ (which is a morphism) to
$\partial_{\ell+1}W$, $\beta_{\ell+1}$ sends a cobordism $W$ to 
$\beta_{\ell+1}W$,  the
map $\tau_{P_{\ell+1}}$ is the functor defined by sending a cobordism
$W$ to the product $W\times P_{\ell+1}$, and the top-horizontal map is the functor defined by considering a $\Sigma_{k}^{\ell+1}$-manifold a $\Sigma_{k}^{\ell}$-manifold. 
It follows from the construction of the categories $\mathbf{Cob}_{d}^{\Sigma^{\ell+1}_{k}}$ in Section \ref{section: the cobordism category} (and the preliminary results developed in the sections leading up to this), that the above diagram is a pull-back square. 
The fact that (\ref{equation: pull-back square}) is a pull-back square for all $\ell < k$ implies that the category $\mathbf{Cob}_{d}^{\Sigma^{\ell+1}_{k}}$ can be defined inductively as the iterated pull-back of the maps $\tau_{P_{\ell+1}}$ and $\partial_{\ell+1}$, starting with $\ell = 0$. 

Passing to classifying spaces, we obtain the commutative square,
\begin{equation} \label{eq: cartesian square intro} 
\xymatrix{
B\mathbf{Cob}_{d}^{\Sigma^{\ell+1}_{k}} \ar[rrr] \ar[d]^{B(\beta_{\ell+1})} &&& B\mathbf{Cob}_{d}^{\Sigma^{\ell}_{k}} \ar[d]^{B(\partial_{\ell + 1})}\\
B\mathbf{Cob}_{d-p_{\ell+1}-1}^{\Sigma^{\ell}_{k-1}} \ar[rrr]^{B(\tau_{P_{\ell+1}})} &&& B\mathbf{Cob}_{d-1}^{\Sigma^{\ell}_{k-1}}.
}
\end{equation}
This brings us to the statement of our first result, which is proven in Section \ref{section: A fibre sequence of classifying spaces}. 
\begin{theorem}  \label{thm: homotopy cartesian} The diagram {\rm (\ref{eq: cartesian square intro})} is homotopy-cartesian for all $\ell$ and $k$ with $0 \leq \ell < k$. 
\end{theorem}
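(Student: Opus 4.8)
The plan is to deduce the theorem from the pull-back square of topological categories (\ref{equation: pull-back square}) together with a homotopy-fibre criterion applied after passing to classifying spaces. The key issue is that taking classifying spaces does not in general carry a pull-back square of categories to a homotopy-cartesian square of spaces; one needs a Serre-fibration-type hypothesis on one of the legs, in the spirit of Galatius--Madsen--Tillmann--Weiss. So the first step is to analyze the functor $\partial_{\ell+1}\colon \mathbf{Cob}_{d}^{\Sigma^{\ell}_{k}} \to \mathbf{Cob}_{d-1}^{\Sigma^{\ell}_{k-1}}$ and show that after realization $B(\partial_{\ell+1})$ behaves like a fibration onto the relevant component; concretely, I would show that for a point $x$ in $\mathbf{Cob}_{d-1}^{\Sigma^{\ell}_{k-1}}$ the homotopy fibre of $B(\partial_{\ell+1})$ over $x$ is computed by the ``fibre category'' of $\Sigma^{\ell}_{k}$-manifolds $W$ equipped with an identification of $\partial_{\ell+1}W$, i.e.\ $\Sigma^{\ell}_{k}$-manifolds with a \emph{fixed collar-neighborhood germ} of the $(\ell+1)$-st face. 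This uses the description of objects and morphisms in terms of $\coprod \BDiff_{\Sigma_{k}}(-)$ recorded in the introduction.

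Next I would identify the homotopy fibre of the other leg, $B(\tau_{P_{\ell+1}})\colon B\mathbf{Cob}_{d-p_{\ell+1}-1}^{\Sigma^{\ell}_{k-1}} \to B\mathbf{Cob}_{d-1}^{\Sigma^{\ell}_{k-1}}$. Since $\tau_{P_{\ell+1}}$ is ``multiply by $P_{\ell+1}$'', its homotopy fibre over $x$ should be the space of $\Sigma^{\ell}_{k-1}$-manifolds $V$ of dimension $d-p_{\ell+1}-1$ equipped with an identification of $V\times P_{\ell+1}$ with the object underlying $x$; a manifold $V$ with $V\times P_{\ell+1} \cong$ (the given $(d-1)$-manifold) is exactly what the factorization structure on a $\Sigma^{\ell+1}_{k}$-face records. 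Comparing the two homotopy-fibre descriptions, the induced map
\[
\hofibre\big(B(\beta_{\ell+1})\big) \longrightarrow \hofibre\big(B(\partial_{\ell+1})\big)
\]
is then the map sending a $\Sigma^{\ell}_{k-1}$-manifold $V$ with $V\times P_{\ell+1}$ identified with $\partial_{\ell+1}W$ to $W$ together with the collar germ of that face. That this is a weak equivalence is precisely the statement that giving a $\Sigma^{\ell+1}_{k}$-structure near the $(\ell+1)$-st face is the same data as giving a $\Sigma^{\ell}_{k}$-structure together with a chosen product factorization $\beta_{\ell+1}W \times P_{\ell+1}$ of that face with a collar --- which is exactly the \emph{definition} of a Baas--Sullivan singularity of the $(\ell+1)$-st type, promoted to families. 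I would make this rigorous using the differential-topological lemmas about $\Sigma_{k}$-manifolds developed earlier in the paper (existence and contractible choice of collars respecting the product structure), so that the space of such structures on a fixed underlying manifold is contractible, forcing the fibre comparison map to be an equivalence.

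Finally I would assemble these pieces: having a pull-back square of categories whose right vertical leg realizes to a quasifibration (over each path component of the base), and a base-point-wise weak equivalence on homotopy fibres of the two vertical maps, implies the square of classifying spaces is homotopy-cartesian. One must check the component/basepoint bookkeeping --- that every component of $B\mathbf{Cob}_{d-1}^{\Sigma^{\ell}_{k-1}}$ in the image is hit and that the fibre comparison is an equivalence over each --- but the group-completion-free part of the argument is formal once the quasifibration property is in hand. The main obstacle I anticipate is the first step: verifying that $B(\partial_{\ell+1})$ is a quasifibration (or more precisely that the square does not merely commute but satisfies the hypotheses of a gluing/fibration lemma), since realization of a pull-back of categories is delicate; this is where the careful topology put on $\mathbf{Cob}_{d}^{\Sigma_{k}}$ in Section \ref{section: the cobordism category}, and the sheaf-theoretic or scanning-type model for these cobordism categories, will have to be invoked rather than pure category theory.
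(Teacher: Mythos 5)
Your overall plan---use the strict pull-back square (\ref{equation: pull-back square}) plus a fibration-type property of one vertical leg to conclude homotopy-cartesianness after realization---is the right shape, and you correctly diagnose that the whole difficulty is concentrated in establishing that property. But the proposal never supplies it: you state that $B(\partial_{\ell+1})$ ``behaves like a fibration'' and flag this as ``the main obstacle I anticipate,'' and the subsequent fibre identifications (the fibre category of $W$ with fixed collar germ, the space of factorizations $V\times P_{\ell+1}\cong \partial_{\ell+1}W$) all presuppose exactly that unproven quasifibration statement, since without it strict fibres of the pull-back square need not compute homotopy fibres. So the entire technical content of the theorem is deferred rather than proved. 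Moreover, attacking the quasifibration property directly on $B\mathbf{Cob}^{\Sigma^{\ell}_{k}}_{d}$ is not what the paper does and would be genuinely hard; maps of classifying spaces of cobordism categories are not fibrations in any obvious sense.

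The paper's actual mechanism is to first replace $B\mathbf{Cob}^{\Sigma^{\ell}_{k}}_{d}$ by the representing space $|\mathbf{D}^{\Sigma^{\ell}_{k}}_{d}|$ of the set-valued sheaf of Definition \ref{defn: D-sheaf}, via the zig-zag (\ref{zig zag 1}), and then to prove the sheaf-level analogue of the covering homotopy property: Lemma \ref{lemma: bockstein k-l concordance} shows by an explicit collar-stretching construction (the reparametrization $\widehat{\rho}$, the pull-back $\widehat{\rho}^{*}(W)$, and the gluing $\widehat{V}\cup W'$) that $\partial_{1}\colon \mathbf{D}^{\Sigma^{0}_{k}}_{d}\to\mathbf{D}^{\Sigma^{0}_{k-1}}_{d-1}$ has the \emph{concordance lifting property}. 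Proposition \ref{prop: concordance lifting property} (imported from Madsen--Weiss) then converts ``strict pull-back of sheaves with one leg having concordance lifting'' into a homotopy-cartesian square of representing spaces, and the general case follows by induction on $\ell$ (Corollary \ref{thm: homotopy cartesian sheaf}). To repair your argument you would need either to carry out this concordance-lifting construction (or an equivalent one), or to give an independent proof that $B(\partial_{\ell+1})$ is a quasifibration; neither is routine, and neither follows from the differential-topological collar lemmas you cite, which give contractibility of spaces of collars on a \emph{fixed} manifold but not the lifting of families over a varying base.
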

In Proposition \ref{proposition: homotopy invariance of times P}, we show that the homotopy class of the bottom-horizontal map $B(\tau_{P_{\ell + 1}})$ depends only the cobordism class of the closed manifold $P_{\ell + 1}$. 
Applying this fact to the homotopy cartesian diagram (\ref{eq: cartesian square intro}) yields the following corollary:
 \begin{corollary} \label{cor: cobordism independence}
Let $\widehat{\Sigma}$ be a sequence of manifolds obtained by
replacing each manifold $P_{i}$ from the sequence $\Sigma$, by a manifold
$\widehat{P}_{i}$ that is cobordant to $P_{i}$. 
Then for all $0 \leq \ell < k$, the classifying space
$B\mathbf{Cob}^{\widehat{\Sigma}^{\ell}_{k}}_{d}$ is weakly homotopy
equivalent to $B\mathbf{Cob}^{\Sigma^{\ell}_{k}}_{d}$.
\end{corollary}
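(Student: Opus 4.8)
The plan is to exhibit $B\mathbf{Cob}^{\Sigma^{\ell}_{k}}_{d}$ as the homotopy limit of a finite diagram into which the manifolds $P_{i}$ enter only through the operation ``multiply by $P_{i}$'', and then to quote Proposition~\ref{proposition: homotopy invariance of times P}. As observed in the discussion of \eqref{equation: pull-back square}, the category $\mathbf{Cob}^{\Sigma^{\ell}_{k}}_{d}$ is obtained from cobordism categories of manifolds with corners by iterating the pull-back squares~\eqref{equation: pull-back square} down from $\ell=0$; consequently $\mathbf{Cob}^{\Sigma^{\ell}_{k}}_{d}$ is the strict limit of a finite diagram $\mathbf D^{\Sigma}$ whose objects are the categories $\mathbf{Cob}^{\Sigma^{0}_{m}}_{n}$ — which involve none of the $P_{i}$ — and whose morphisms are the boundary-face functors $\partial_{i}$ and the product functors $\tau_{P_{i}}$ for $i\le\ell$, together with their composites. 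By Theorem~\ref{thm: homotopy cartesian} every square occurring in this presentation is homotopy-cartesian after applying the classifying-space functor $B$, so by the pasting lemma for homotopy-cartesian squares, $B\mathbf{Cob}^{\Sigma^{\ell}_{k}}_{d}$ is the homotopy limit of the diagram $B\mathbf D^{\Sigma}$ obtained by applying $B$ objectwise.

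Now compare $B\mathbf D^{\Sigma}$ with $B\mathbf D^{\widehat\Sigma}$. Since $\widehat P_{i}$ is cobordant to $P_{i}$ we have $\dim\widehat P_{i}=\dim P_{i}$, so the recursion producing the two diagrams branches identically: $\mathbf D^{\Sigma}$ and $\mathbf D^{\widehat\Sigma}$ have the same indexing shape and the same objects (the corner categories, and the functors $\partial_{i}$, being independent of the $P_{i}$), and they differ only in that each leg $B(\tau_{P_{i}})$ is replaced by $B(\tau_{\widehat P_{i}})$. By Proposition~\ref{proposition: homotopy invariance of times P} the maps $B(\tau_{P_{i}})$ and $B(\tau_{\widehat P_{i}})$ are homotopic, being determined up to homotopy by the common cobordism class of $P_{i}$ and $\widehat P_{i}$. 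Transporting $B\mathbf D^{\Sigma}$ along a coherent choice of these homotopies produces a weak equivalence of diagrams from $B\mathbf D^{\Sigma}$ to $B\mathbf D^{\widehat\Sigma}$, and passing to homotopy limits yields the asserted weak equivalence $B\mathbf{Cob}^{\Sigma^{\ell}_{k}}_{d}\simeq B\mathbf{Cob}^{\widehat\Sigma^{\ell}_{k}}_{d}$.

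The step that will take the most care — and which I regard as the main obstacle — is precisely this ``coherent choice'': to pass to homotopy limits one cannot merely know that the two diagrams have the same objects and pairwise-homotopic morphisms, but must assemble the homotopies $B(\tau_{P_{i}})\simeq B(\tau_{\widehat P_{i}})$ into an honest map of diagrams (or a zig-zag of such through a common refinement). I would secure this either by upgrading the homotopy of Proposition~\ref{proposition: homotopy invariance of times P} to a concordance of functors and propagating it through the iterated pull-back, or by the more pedestrian device of altering one $P_{i}$ at a time: since only the finitely many $P_{i}$ with $i\le\ell$ affect $\mathbf{Cob}^{\Sigma^{\ell}_{k}}_{d}$, it suffices to treat the case in which $\widehat\Sigma$ differs from $\Sigma$ in a single position $j$. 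For $\ell<j$ the categories coincide; for $\ell=j$ the equivalence $B\mathbf{Cob}^{\Sigma^{j}_{k}}_{d}\simeq B\mathbf{Cob}^{\widehat\Sigma^{j}_{k}}_{d}$ is immediate from Theorem~\ref{thm: homotopy cartesian} at parameter $j-1$, where the two cospans agree except in the single leg $B(\tau_{P_{j}})$ versus $B(\tau_{\widehat P_{j}})$ — homotopic maps between the same spaces, so that no coherence intervenes; and for $\ell>j$ one climbs the squares~\eqref{eq: cartesian square intro} at parameters $j,j+1,\dots$, whose new legs $B(\tau_{P_{\ell'}})$ with $\ell'>j$ are literally unchanged, so that each inductive step reduces to the naturality of $\partial_{\ell'+1}$ and $\tau_{P_{\ell'+1}}$ with respect to the equivalence constructed one parameter below. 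Chaining finitely many such single-position alterations then yields the general statement.
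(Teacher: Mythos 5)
Your argument is essentially the paper's: the corollary is deduced there in one sentence by combining the homotopy-cartesian squares of Theorem \ref{thm: homotopy cartesian} (iterated from the corner category $\mathbf{Cob}^{\Sigma^{0}_{k}}_{d}$, which does not involve the $P_{i}$) with Proposition \ref{proposition: homotopy invariance of times P}, exactly as you propose. Your discussion of the coherence needed to compare the two towers of homotopy pullbacks, and the one-position-at-a-time reduction, supplies detail the paper omits entirely; it is a sound way to close that gap.
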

We then identify the homotopy fibre of the vertical
maps of the above homotopy-cartesian square with the classifying space
$B\mathbf{Cob}_{d}^{\Sigma^{\ell-1}_{k-1}}$. 
\begin{corollary} \label{thm: homotopy fibre sequence classifying space} For all $k \in \N$, there is a homotopy fibre-sequence,
$$\xymatrix{
B\mathbf{Cob}_{d}^{\Sigma^{\ell}_{k-1}} \ar[rr] &&
    B\mathbf{Cob}_{d}^{\Sigma^{\ell+1}_{k}} \ar[rr]^{B(\beta_{\ell+1})} &&
    B\mathbf{Cob}_{d-1-p_{k}}^{\Sigma^{\ell}_{k-1}}.
    }$$
\end{corollary}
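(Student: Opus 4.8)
The plan is to derive the fibre sequence from Theorem~\ref{thm: homotopy cartesian} together with a direct identification of the homotopy fibre of the boundary-face functor. Since the square (\ref{eq: cartesian square intro}) is homotopy-cartesian, for a chosen basepoint the homotopy fibre of its left-hand vertical map $B(\beta_{\ell+1})$ is canonically weakly equivalent to the homotopy fibre of its right-hand vertical map
$$B(\partial_{\ell+1}) \colon B\mathbf{Cob}_{d}^{\Sigma^{\ell}_{k}} \lra B\mathbf{Cob}_{d-1}^{\Sigma^{\ell}_{k-1}}$$
over the image basepoint, which, being the class of the empty manifold, is carried to itself by $B(\tau_{P_{\ell+1}})$. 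So it suffices to show that the homotopy fibre of $B(\partial_{\ell+1})$ over the empty object is weakly equivalent to $B\mathbf{Cob}_{d}^{\Sigma^{\ell}_{k-1}}$, and that the equivalence is induced by the inclusion functor $\iota\colon \mathbf{Cob}_{d}^{\Sigma^{\ell}_{k-1}} \to \mathbf{Cob}_{d}^{\Sigma^{\ell}_{k}}$ which adjoins an empty $(\ell+1)$-st face; note that $\partial_{\ell+1}\circ\iota$ is the constant functor at $\emptyset$, so $B\iota$ factors through that homotopy fibre.

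To identify this homotopy fibre I would work with the parametrised, space-of-manifolds model of the cobordism categories underlying the construction in Section~\ref{section: the cobordism category} (in the sense of \cite{GMTW 09} and \cite{G 08}). In this model $\partial_{\ell+1}$ is represented by the restriction map sending a $d$-dimensional $\Sigma^{\ell}_{k}$-manifold to its $(\ell+1)$-st boundary face, a $(d-1)$-dimensional $\Sigma^{\ell}_{k-1}$-manifold. The crucial step is to prove that this restriction map is a quasifibration; equivalently, one may check it is a Serre microfibration with the expected fibres. The path-lifting is produced by a collar construction: given a family of $(\ell+1)$-st faces over a disc together with a lift of the value at the centre, one thickens the family by a collar neighbourhood of the $(\ell+1)$-st face and glues it onto the given lift, choosing the collars compatibly with the product factorisations $\partial_{I}W = \beta_{I}W\times P^{I}$ so that every other face $\partial_{i}W$ ($i\neq\ell+1$), every iterated intersection, and every map $\beta_{I}$ is preserved. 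This is a parametrised collar/isotopy-extension statement for $\Sigma_{k}$-manifolds, which I expect to reduce to the differential-topological results established in the sections preceding Section~\ref{section: the cobordism category}. Its fibre over $\emptyset$ is the space of $\Sigma^{\ell}_{k}$-manifolds whose $(\ell+1)$-st face is empty; discarding that empty face and reindexing the remaining faces identifies this space with the model for $\mathbf{Cob}_{d}^{\Sigma^{\ell}_{k-1}}$, compatibly with $\iota$. Combined with the first paragraph this yields the desired fibre sequence.

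The main obstacle is the quasifibration claim. In contrast with the single-boundary situation, a $\Sigma_{k}$-manifold comes with an entire cube of iterated faces, each carrying its own product decomposition, and the collars used to lift a path of $(\ell+1)$-st faces must be chosen coherently so as not to disturb any of the other faces or their factorisations; arranging such a compatible system of collars is the technical heart of the proof. Once it is available, the identification of the fibre and the reduction through Theorem~\ref{thm: homotopy cartesian} are formal. An alternative to the quasifibration argument is a direct application of Quillen's Theorem~B to the functor $\partial_{\ell+1}$, verifying that each morphism of $\mathbf{Cob}_{d-1}^{\Sigma^{\ell}_{k-1}}$ induces a weak homotopy equivalence on the relevant comma category; that verification again comes down to the same collaring statement.
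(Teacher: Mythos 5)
Your proposal is correct and takes essentially the same route as the paper: the paper likewise deduces the corollary from the homotopy-cartesian square together with a lifting property for the boundary-face map (the concordance lifting property of Lemma \ref{lemma: bockstein k-l concordance}, established in the sheaf model $\mathbf{D}^{\Sigma^{\ell}_{k}}_{d}$ by precisely the collar/reparametrisation construction you flag as the technical heart), and then identifies the fibre over $\emptyset$ with the sheaf for $\Sigma^{\ell}_{k-1}$-manifolds exactly as you do. The only cosmetic difference is that the paper applies its fibre-sequence criterion (Corollary \ref{concordance corollary}) directly to $\beta_{\ell+1}$, which inherits the lifting property as a pullback of $\partial_{\ell+1}$, rather than passing through the homotopy fibre of $\partial_{\ell+1}$.
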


\subsection{The main result}
To state our main theorem, we will need some more terminology. 
For an integer $k \in \N$, let $\langle k \rangle$ denote the set $\{1, \dots, k\}$ and then let $2^{\langle k \rangle}$ denote the category whose objects are given by the subsets of $\langle k \rangle$ and whose morphisms are given by the inclusion maps. 
We will need to consider functors $(X_{\bullet})^{\text{op}}: 2^{\langle k \rangle} \longrightarrow \Spec$, where $\Spec$ denotes the category of spectra. 
Such a functor $X_{\bullet}$ will be referred to as a \textit{$k$-cubic spectrum}. 
For any $k$-cubic spectrum $X_{\bullet}$, we denote by $t\Cofibre(X_{\bullet})$ the \textit{total homotopy cofibre} of the functor $X_{\bullet}$ (see Section \ref{subsection: total cofibres} for the definition, or see also \cite[Definition 3.5]{M 10}).

In Section \ref{section: k spaces} we construct a $k$-cubic spectrum $\MT_{\Sigma_{k}}(d)_{\bullet}$ such that for each subset $I \subseteq \langle k \rangle$, the spectrum $\MT_{\Sigma_{k}}(d)_{I}$ is equal to the spectrum 
$\Sigma^{-|I|}\MT(d-|I|-p_{I})$ from \cite{GMTW 09}, where $p_{I} = \sum_{i \in I}p_{i}$. 
We then let $\MT_{\Sigma_{k}}(d)$ denote the spectrum given by the total homotopy cofibre, $t\Cofibre(\MT_{\Sigma_{k}}(d)_{\bullet})$. 
The $k$-cubic spectrum $\MT_{\Sigma_{k}}(d)_{\bullet}$ is constructed in such a way so that in the case that $\Sigma_{k}$ is a list of single points, the spectrum $\MT_{\Sigma_{k}}(d)$ reduces to the spectrum considered in \cite{G 08}. 
In Section \ref{section: Main Theorem}, we prove the following theorem. 
\begin{theorem} \label{thm: weak homotopy equivalence} There is a weak homotopy equivalence,
$B\Cob^{\Sigma_{k}}_{d} \simeq \Omega^{\infty-1}\MT_{\Sigma_{k}}(d)$. 
\end{theorem}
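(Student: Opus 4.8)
The plan is to prove Theorem~\ref{thm: weak homotopy equivalence} by induction on $k$, using Theorem~\ref{thm: homotopy cartesian} and Corollary~\ref{thm: homotopy fibre sequence classifying space} to reduce the statement to the already-established case $k=0$, which is precisely the theorem of Galatius--Madsen--Tillmann--Weiss from \cite{GMTW 09}: $B\Cob_{d} \simeq \Omega^{\infty-1}\MT(d)$. The base case $k=0$ (and, more generally, the case $\Sigma_k = (\star, \dots, \star)$ of manifolds with corners) is Genauer's theorem from \cite{G 08}. For the inductive step, I would first prove the statement for the intermediate categories $\mathbf{Cob}_{d}^{\Sigma^{\ell}_{k}}$, again by induction, this time on $\ell$, holding $k$ fixed; the case $\ell = 0$ is Genauer's theorem, and the passage from $\ell$ to $\ell+1$ uses the homotopy-cartesian square (\ref{eq: cartesian square intro}).

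The key mechanism is the following. On the spectrum side, the $k$-cubic spectrum $\MT_{\Sigma_{k}}(d)_{\bullet}$ restricted to the subcube spanned by the first $\ell+1$ coordinates (with the remaining coordinates set to their ``point'' values) gives $\MT_{\Sigma^{\ell+1}_{k}}(d)$ as a total homotopy cofibre, and total homotopy cofibres of $(\ell+1)$-cubes can be computed iteratively: one forms the total cofibre of the two parallel $\ell$-subcubes and then takes the cofibre of the induced map. Concretely, this exhibits a cofibre sequence of spectra
\begin{equation} \label{eq: cofibre of spectra plan}
\Sigma^{-1}\MT_{\Sigma^{\ell}_{k-1}}(d-1) \longrightarrow \MT_{\Sigma^{\ell}_{k-1}}(d-1-p_{k}) \longrightarrow \MT_{\Sigma^{\ell+1}_{k}}(d),
\end{equation}
or equivalently, after applying $\Omega^{\infty-1}$ (which sends cofibre sequences of spectra to fibre sequences of infinite loop spaces), a homotopy fibre sequence $\Omega^{\infty-1}\MT_{\Sigma^{\ell}_{k-1}}(d) \to \Omega^{\infty-1}\MT_{\Sigma^{\ell+1}_{k}}(d) \to \Omega^{\infty-1}\MT_{\Sigma^{\ell}_{k-1}}(d-1-p_{k})$. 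This is to be matched, term by term, with the homotopy fibre sequence of classifying spaces from Corollary~\ref{thm: homotopy fibre sequence classifying space}. The induction hypothesis supplies the weak equivalences on the fibre and the base, and a five-lemma / long-exact-sequence-of-homotopy-groups argument then forces the equivalence on the total space $B\Cob^{\Sigma^{\ell+1}_{k}}_{d} \simeq \Omega^{\infty-1}\MT_{\Sigma^{\ell+1}_{k}}(d)$. Taking $\ell+1 = k$ completes the induction and yields the theorem.

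The part requiring genuine care is the \emph{compatibility} of the two fibre sequences: it is not enough to know that both the classifying-space side and the spectrum side fit into fibre sequences with matching fibres and bases; one must produce an actual homotopy-commutative ladder. This means constructing, at the level of the parametrized Pontryagin--Thom / scanning construction, a natural transformation from the cobordism-category diagram to the spectrum-level diagram that is compatible with the maps $B(\beta_{\ell+1})$ and the spectrum cofibre map in (\ref{eq: cofibre of spectra plan}), and then checking it restricts correctly to fibres. In practice this amounts to building the Pontryagin--Thom map $B\Cob^{\Sigma^{\ell+1}_{k}}_{d} \to \Omega^{\infty-1}\MT_{\Sigma^{\ell+1}_{k}}(d)$ directly --- generalizing the sheaf-theoretic / Grothendieck construction argument of \cite{GMTW 09} to $\Sigma_k$-manifolds using the Grassmannian and Thom space constructions of Section~\ref{section: k spaces} --- and verifying that under this map the functor $\beta_{\ell+1}$ corresponds to the structure map of the cube. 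I expect the main obstacle to be precisely this naturality bookkeeping: ensuring that the identification of $\partial_{\ell+1}W$ and $\beta_{\ell+1}W$ with the appropriate faces of the manifold, together with the collar/product structure near the singular strata, is carried through the Thom-space construction coherently enough that the square (\ref{eq: cartesian square intro}) and its spectrum-level analogue commute on the nose up to preferred homotopy. Once that diagram is in place, the homotopy-theoretic conclusion is immediate from Theorem~\ref{thm: homotopy cartesian} and induction.
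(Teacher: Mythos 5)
Your proposal follows essentially the same route as the paper: the paper likewise proves the theorem by building a parametrized Pontryagin--Thom map out of the sheaf models $\mathbf{D}^{\Sigma_{k},\nu}_{d,n}$ into the loop spaces of the cubical Thom spectra of Section \ref{section: k spaces}, comparing the classifying-space fibre sequence of Corollary \ref{thm: homotopy fibre sequence classifying space} with the fibre sequence coming from the iterated-cofibre description of $\MT_{\Sigma_{k}}(d)$, and concluding by induction and the five lemma (the paper's base case being the $k=1$ result of Part 1 rather than Genauer's theorem, and its induction running on $k$ directly rather than through the intermediate lists $\Sigma^{\ell}_{k}$). The only slip is in your displayed cofibre sequence, whose first two terms have their degrees interchanged --- it should read $\Sigma^{-1}\MT_{\Sigma^{\ell}_{k-1}}(d-1-p_{k}) \to \MT_{\Sigma^{\ell}_{k-1}}(d) \to \MT_{\Sigma^{\ell+1}_{k}}(d)$ --- but the fibre sequence of infinite loop spaces you then extract from it is the correct one, so this does not affect the argument.
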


\begin{remark} To simplify the exposition and notation we will only treat unoriented manifolds Baas-Sullivan singularities. 
However, all of our constructions could be carried through for manifolds equipped with tangential structures and analogous results could be obtained with little extra work. 
We have not done these things so as to avoid a notational nightmare. 
\end{remark}

\subsection{Organization}
In Section \ref{section: The Category 1} we give a rigorous definition of $\Sigma_{k}$-manifolds. 
In Section \ref{section: mapping spaces} we define certain important mapping spaces associated to $\Sigma_{k}$-manifolds. 
In Section \ref{section: the cobordism category} we define the cobordism category $\mathbf{Cob}^{\Sigma_{k}}_{d}$ of $\Sigma_{k}$-manifolds. 
In Section \ref{section: sheaf model} we construct certain sheaves modeled on the cobordism category $\mathbf{Cob}^{\Sigma_{k}}_{d}$ and in Section \ref{section: A fibre sequence of classifying spaces} we prove Theorem \ref{thm: homotopy cartesian}, using all of the constructions developed in the previous sections. 
In Section \ref{section: k spaces} we construct the spectrum that appears in the statement of Theorem \ref{thm: weak homotopy equivalence} and in Section \ref{section: Main Theorem} we prove Theorem \ref{thm: weak homotopy equivalence}. 
In the appendix we prove a technical lemma which is used in Section \ref{eq: P-embeddings} to prove Theorem \ref{thm: contractibility}. 

\subsection{Acknowledgements}
The author would like to thank Boris Botvinnik for suggesting this
particular problem and for numerous helpful discussions on the subject
of this paper. 
The author is also grateful to Nils Baas and Marius Thaule for some very helpful conversations and remarks
and to Oscar Randal-Williams for
critical comments on the first part of this project.  

 \section{Manifolds with Baas-Sullivan singularities} \label{section: The Category 1}
Fix once and for all a sequence of closed manifolds $\Sigma := (P_{1}, \dots, P_{j}, \dots )$. 
For each $i \in \N$, we let $p_{i}$ denote the integer $\dim(P_{i})$. 
For $k \in \N$, let $\Sigma_{k}$, denote the truncated list $(P_{1}, \dots, P_{k})$ comprised of the first $k$-elements of $\Sigma$ ($\Sigma_{0}$ is then defined to be the empty list). 
If $\ell$ is a non-negative integer less than or equal to $k$, we let $\Sigma^{\ell}_{k}$ denote the list obtained from $\Sigma_{k}$ by replacing the manifold $P_{i}$ by $\star$ (the single point space) if $i > \ell$. 
\begin{remark}
The main objects of interest are $\Sigma_{k}$-manifolds. 
However, for technical reasons it will be important to also study manifolds with singularities modeled on the lists $\Sigma^{\ell}_{k}$ for $\ell \leq k$. 
The $\Sigma^{\ell}_{k}$-manifolds will serve as intermediates between \textit{manifolds with corners} and $\Sigma_{k}$-manifolds and will be useful in many of our constructions, see (\ref{equation: pull-back square}) and Theorem \ref{thm: homotopy cartesian}.
\end{remark}
\begin{Notation} The following are some notational conventions that we use in this section and throughout the paper.
\begin{itemize} \itemsep.2cm
\item
We let $\langle k \rangle$ denote the set $\{1, \dots, k\}$ and let $\widehat{\langle k \rangle}$ denote the set $\{0, 1, \dots, k\}$. 
\item
If $\bar{\epsilon} = (\epsilon_{0}, \dots, \epsilon_{k})$ is a list of positive real numbers, we let $[0, \bar{\epsilon})^{k+1}$ denote the space given by 
$\{(x_{0}, \dots, x_{k}) \in [0, \infty)^{k+1} \; | \; x_{i} < \epsilon_{i} \; \}.$
\item
If $I \subseteq \widehat{\langle k \rangle}$ is a subset, we let $[0, \bar{\epsilon})^{k+1}_{I}$ denote the subspace of $[0, \bar{\epsilon})^{k+1}$ given by 
$\{(x_{0}, \dots, x_{k}) \in [0, \bar{\epsilon})^{k+1} \; | \; \text{$x_{i} = 0$ when $i \notin I$}\; \}.$ 
Similarly, we let $[0, \infty)^{k+1}_{I}$ denote the subspace of $[0, \infty)^{k+1}$ given by 
$\{(x_{0}, \dots, x_{k}) \in [0, \infty)^{k+1} \; | \; \text{$x_{i} = 0$ when $i \notin I$}\; \}.$ 
\vspace{.1cm}
\end{itemize}
\end{Notation}
We now give a rigorous definition of $\Sigma^{\ell}_{k}$-manifold. 
By this definition, $\Sigma^{\ell}_{k}$-manifolds are a generalization of \textit{manifolds with corners}. For background on manifolds with corners see \cite{G 08} and \cite{L 00}.
\begin{defn} \label{defn: closed Manifold def} 
Let $\ell \leq k$ be non-negative integers. 
A smooth, $d$-dimensional manifold $M$ is said to be a $\Sigma^{\ell}_{k}$-manifold if it is equipped with the following extra structure:
\begin{enumerate} \itemsep.3cm
\item[i.] The boundary $\partial M$ is given a decomposition
$\partial M \; = \; \partial_{0}M \cup \cdots \cup \partial_{k}M$
 into a union of $(d-1)$-dimensional manifolds such that for all subsets $I
  \subseteq \kseto{k}$, the intersection
$$\partial_{I}M \; := \; \bigcap_{i \in I}\partial_{i}M$$ 
is a $(d- |I|)$-dimensional manifold, with boundary given by
$$ 
\partial(\partial_{I}M) \; = \; \bigcup_{j \in \langle k
  \rangle}(\partial_{j}M\cap \partial_{I}M).
$$
\item[ii.] For each subset $I \subseteq \kseto{k}$,
there are embeddings
$ h_{I}: \partial_{I}M\times [0, \infty)^{k+1}_{I} 
  \longrightarrow \; M 
$
that satisfy the following two compatibility conditions:
\begin{enumerate} \itemsep.2cm
\item[(a)] For each pair of subsets $J \subseteq I \subseteq \widehat{\langle k \rangle}$, 
the embedding $h_{J}$
    maps the subspace 
    $$\partial_{J}M\times[0,1)^{k+1}_{
        J\setminus I} \; \subset \;
      \partial_{J}M\times[0,1)^{k+1}_{J }
$$ 
into $\partial_{I}M\times [0,1)^{k+1}_{I}$. 
\item[(b)] For each pair of subsets $I \subseteq J \subseteq \widehat{\langle k \rangle}$, the following diagram commutes, 
$$
\xymatrix{
\partial_{J}M\times[0,1)^{k+1}_{ J } \ar[d]^{=}
\ar[rrrrrd]^{h_J} &&& &&
\\
\partial_{J}M\times[0,1)^{k+1}_{ J\setminus I }\times[0,1)^{k+1}_{I}  \ar[rrr]_{h_{J\setminus I}\times Id} &&& \partial_{I}M\times[0,1)^{k+1}_{I } \ar[rr]_{h_{I}} && {^{\mbox{\ }}M},}
$$ 
where the left vertical map is the standard identification
$$\partial_{J}M\times[0,1)^{k+1}_{
          J\setminus I}\times[0,1)^{k+1}_{
            I} \; = \; \partial_{J}M\times[0,1)^{k+1}_{J}.$$
          \end{enumerate}
\item[iii.] For each subset $I \subseteq \langle \ell \rangle$ there are
  diffeomorphisms,
$$\xymatrix{\psi_{I}: \partial_{I}M \ar[rr]^{\cong} &&
    \beta_{I}M\times P^{I}}$$ where $\beta_{I}M$ is a $(d- |I| -
  p_{I})$-dimensional manifold and $P^{I} = \prod_{i \in I}P_{i}$. 
  These diffeomorphisms are subject to the following compatibility condition:
  if $I \subseteq J \subseteq \kset{\ell}$ are subsets and
  $\iota_{J,I}: \partial_{J}M \hookrightarrow \partial_{I}M$ is the
  corresponding inclusion, then the map
$$\psi_{I}\circ\iota_{J,I}\circ\psi^{-1}_{J}: \beta_{J}M\times P^{J}
\longrightarrow \beta_{I}M\times P^{I}$$ is the identity on the direct
factor of $P^{I}$ in the product $P^{J} = \prod_{j \in J}P_{j}$.
\end{enumerate}
\end{defn}
We refer to the embeddings $h_{I}$ from ii. as \textit{collars} and the diffeomorphisms from iii. as the \textit{structure maps}.
It follows directly from the above definition that for a subset $I \subseteq \kseto{k}$, the manifolds
$\partial_{I}M$ and $\beta_{I}M$ are $\Sigma^{\ell}_{k}$-manifolds of dimension $d - |I|$ and $d - |I| - \sum_{i\in I}p_{i}$ respectively. 
Furthermore, both $\partial_{\ell+1}M$ and $\beta_{\ell+1}M$ have the structure of $\Sigma^{\ell}_{k-1}$-manifolds. 
The $\Sigma^{\ell}_{k}$-manifold $\partial_{0}M$ is said to be the \textit{boundary} of $M$.  
If $M$ is compact and $\partial_{0}M = \emptyset$, then $M$ is said to be a \textit{closed} $\Sigma^{\ell}_{k}$-manifold. 

We will need to consider maps between $\Sigma^{\ell}_{k}$-manifolds and arbitrary topological spaces. 
\begin{defn} \label{defn: sigma-k map}
Let $M$ be a $\Sigma^{\ell}_{k}$-manifold and let $X$ be a topological space. 
A map 
$$f: M \longrightarrow X$$
is said to be a $\Sigma^{\ell}_{k}$-map if for each subset $I \subseteq \langle \ell \rangle$, there is a map $f_{\beta_{I}}: \beta_{I}M \longrightarrow X$ such that the restriction map $f|_{\partial_{I}M}: \partial_{I}M \longrightarrow X$ has the factorization,
$$\xymatrix{
\partial_{I}M \ar[rr]^{\phi_{I}} && \beta_{I}M\times P^{I} \ar[rrr]^{\text{proj}_{\beta_{I}M}} &&& \beta_{I}M \ar[rr]^{f_{\beta_{I}}} && X .
}$$
If $X$ is a smooth manifold, then $f$ is called a \textit{smooth $\Sigma^{\ell}_{k}$-map} if $f$ is smooth when considering $M$ as a smooth manifold with corners.
\end{defn}

We will ultimately need to consider submersions of $\Sigma^{\ell}_{k}$-manifolds. 
\begin{defn} \label{defn: sigma-k submersion}
A smooth $\Sigma^{\ell}_{k}$-map $f: M \longrightarrow X$ is said to be a $\Sigma^{\ell}_{k}$-submersion if $f$ is a submersion (when considering $M$ as a smooth manifold with corners) and $f|_{\partial_{I}M}: \partial_{I}M \longrightarrow X$ is a submersion for every subset $I \subseteq \widehat{\langle k \rangle}$. 
\end{defn}
Transversality will also play an important role. 
\begin{defn} \label{defn: transversality}
Let $X$ be a smooth manifold and let $K \subset X$ be a submanifold. 
A smooth $\Sigma^{\ell}_{k}$-map $f: M \longrightarrow X$ is said to be \textit{$\Sigma^{\ell}_{k}$-transverse} to $K$ if $f$ is transverse to $K$ and if for each subset $I \subseteq \widehat{\langle k \rangle}$, the map $f|_{\partial_{I}M}: \partial_{I}M \longrightarrow X$ is transverse to $K$.
\end{defn}
It follows easily that if the $\Sigma^{\ell}_{k}$-map $f: M \longrightarrow X$ is transverse to $K$ then the pre-image $f^{-1}(K)$ has the structure of a $\Sigma^{\ell}_{k}$-manifold. 

We are ultimately interested in the cobordism theory of $\Sigma^{\ell}_{k}$-manifolds. 
\begin{defn} \label{defn: cobordism}
Let $M_{a}$ and $M_{b}$ be closed $\Sigma^{\ell}_{k}$-manifolds of dimension $d$ and $W$ be a compact $\Sigma^{\ell}_{k}$-manifold of dimension $d+1$. 
If $\partial_{0}W = M_{a}\sqcup M_{b}$, then the triple $(W, M_{a}, M_{b})$ is said to be a \textit{$\Sigma^{\ell}_{k}$-cobordism triple}. 
Two closed $\Sigma^{\ell}_{k}$-manifolds $M_{a}$ and  $M_{b}$ are said to be $\Sigma^{\ell}_{k}$-cobordant if there exists a compact $(d+1)$-dimensional $\Sigma^{\ell}_{k}$-manifold $W$ such that $\partial_{0}W = M_{a}\sqcup M_{b}$. 
\end{defn}
\begin{remark} \label{remark: bordism group}
Using the above definition, one can construct the cobordism groups of $\Sigma^{\ell}_{k}$-manifolds, denoted by $\Omega^{\Sigma^{\ell}_{k}}_{*}$.
We refer the reader to \cite{B 92} for details on the construction. 
\end{remark}

\section{Mapping Spaces} \label{section: mapping spaces}
\subsection{Diffeomorphisms}
For what follows, let $M_{a}$ and $M_{b}$ be $\Sigma^{\ell}_{k}$-manifolds. 
For each subset $I \subseteq \kseto{k}$, we denote by $h^{a}_{I}$ and $h^{b}_{I}$ the collar embeddings associated to $M_{a}$ and $M_{b}$ and denote by
 $\psi^{a}_{I}$ and $\psi^{b}_{I}$ the \textit{structure maps} from, condition iii. of Definition \ref{defn: closed Manifold def}. 
\begin{defn} \label{defn: P-morphism}
A smooth map $f: M_{a} \longrightarrow M_{b}$ is said to be a \textit{$\Sigma^{\ell}_{k}$-morphism} if the following conditions are satisfied:
\begin{enumerate} \itemsep.2cm
\item[i.] For all subsets $I \subseteq \kseto{k}$, we have $f(\partial_{I}M_{a}) \subset \partial_{I}M_{b}$.
\item[ii.] The map $f$ interacts with the collars in the following way.
There exists a list of positive integers $\bar{\epsilon} = (\epsilon_{0}, \dots, \epsilon_{k})$
such that for each subset $I \subseteq \kseto{k}$, 
$$ 
f\circ h_{I}(x, t_{1}, \dots, t_{k}) \; = \;
  h_{I}(f|_{\partial_{I}M_{a}}(x),  t_{0}, \dots, t_{k})
$$ 
where $x \in \partial_{I}M_{a}$ and $(t_{0}, \dots, t_{k}) \in [0,\bar{\epsilon})^{k+1}_{I}$.
\item[iii.] For each subset $I \subseteq \kset{\ell}$, the
  restrictions $f|_{\partial_{I}M}$ have the factorizations,
$$f\mid_{\partial_{I}M_{a}} = (\psi^{a})_{I}^{-1}\circ(f_{\beta_{I}M_{a}}\times Id_{P^{I}})\circ\psi^{b}_{I}$$
where $f_{\beta_{I}M_{a}}: \beta_{I}M_{a} \longrightarrow \beta_{I}M_{b}$ is a smooth map satisfying conditions i. and ii. given above.
\end{enumerate}

\end{defn}
We let $C^{\infty}_{\Sigma^{\ell}_{k}}(M_{a}, M_{b})$ denote the space of $\Sigma^{\ell}_{k}$-morphisms $M_{a} \rightarrow M_{b}$, topologized as a subspace of the space of smooth maps $M_{a} \rightarrow M_{b}$ in the $C^{\infty}$-topology.
For a $\Sigma^{\ell}_{k}$-manifold $M$, we let $C^{\infty}_{\Sigma^{\ell}_{k}}(M)$ denote the space $C^{\infty}_{\Sigma^{\ell}_{k}}(M, M)$. 
We will need to consider diffeomorphisms of $\Sigma^{\ell}_{k}$-manifolds as well. 
\begin{defn} A smooth map between $\Sigma^{\ell}_{k}$-manifolds $f: M_{a} \longrightarrow M_{b}$, is said to be a \textit{$\Sigma^{\ell}_{k}$-diffeomorphism} if it is both a diffeomorphism as a map of smooth manifolds and a $\Sigma^{\ell}_{k}$-morphism, i.e. it satisfies all conditions of Definition \ref{defn: P-morphism}. 
\end{defn}
 We denote by $\Diff_{\Sigma^{\ell}_{k}}(M_{a}, M_{b})$ the space of $\Sigma^{\ell}_{k}$-diffeomorphisms from $M_{a}$ to $M_{b}$, where
 $\Diff_{\Sigma^{\ell}_{k}}(M_{a}, M_{b})$ is topologized as a subspace of $C^{\infty}_{\Sigma^{\ell}_{k}}(M_{a}, M_{b})$. 
 For a $\Sigma^{\ell}_{k}$-manifold $M$, we let $\Diff_{\Sigma^{\ell}_{k}}(M)$ denote the space $\Diff_{\Sigma^{\ell}_{k}}(M, M)$.
 The space $\Diff_{\Sigma^{\ell}_{k}}(M)$ has the structure of a topological group with product given by composition.

The next proposition is proven in the same way as \cite[Proposition 3.1]{P 13}.
\begin{proposition} \label{prop: P-diff open set}
For any two compact $\Sigma^{\ell}_{k}$-manifolds $M_{a}$ and $M_{b}$, $\Diff_{\Sigma^{\ell}_{k}}(M_{a}, M_{b})$ is an open subset of $C^{\infty}_{\Sigma^{\ell}_{k}}(M_{a}, M_{b})$. 
\end{proposition}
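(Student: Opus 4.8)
The plan is to reduce the statement to the classical fact that for compact smooth manifolds, the space of diffeomorphisms is open in the space of smooth self-maps (in the $C^{\infty}$-topology), and then to show that the extra $\Sigma^{\ell}_{k}$-structure conditions (the collar compatibility of Definition \ref{defn: P-morphism}(ii) and the product factorizations of Definition \ref{defn: P-morphism}(iii)) are automatically inherited on a neighborhood. The subtlety, as in \cite[Proposition 3.1]{P 13}, is that $\Diff_{\Sigma^{\ell}_{k}}(M_{a},M_{b})$ is \emph{not} simply the intersection of $C^{\infty}_{\Sigma^{\ell}_{k}}(M_{a},M_{b})$ with the open set of ordinary diffeomorphisms: being an ordinary diffeomorphism is an open condition, but a $\Sigma^{\ell}_{k}$-morphism which happens to be an ordinary diffeomorphism is \emph{a priori} only a diffeomorphism of the underlying manifolds, not a $\Sigma^{\ell}_{k}$-diffeomorphism, unless its inverse also respects the structure. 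So the key point is to check that if $f \in C^{\infty}_{\Sigma^{\ell}_{k}}(M_{a},M_{b})$ is an ordinary diffeomorphism, then $f^{-1}$ is automatically a $\Sigma^{\ell}_{k}$-morphism, and hence $f$ is a $\Sigma^{\ell}_{k}$-diffeomorphism.

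First I would argue that the set $U \subseteq C^{\infty}_{\Sigma^{\ell}_{k}}(M_{a},M_{b})$ of $\Sigma^{\ell}_{k}$-morphisms which are ordinary diffeomorphisms of the underlying manifolds with corners is open in $C^{\infty}_{\Sigma^{\ell}_{k}}(M_{a},M_{b})$: this is exactly the intersection of $C^{\infty}_{\Sigma^{\ell}_{k}}(M_{a},M_{b})$ with the subset of $C^{\infty}(M_{a},M_{b})$ consisting of diffeomorphisms, which is open by the standard compactness argument (a $C^{1}$-small perturbation of a diffeomorphism between compact manifolds is again a diffeomorphism — one must also check that the corner structure is respected under small perturbation, but condition i.\ of Definition \ref{defn: P-morphism} together with the collar condition ii.\ forces $f$ to send each stratum $\partial_{I}M_{a}$ onto $\partial_{I}M_{b}$, and an ordinary diffeomorphism with this property restricts to a diffeomorphism of each stratum). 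Second, I would show $U = \Diff_{\Sigma^{\ell}_{k}}(M_{a},M_{b})$. The inclusion $\supseteq$ is immediate. For $\subseteq$, take $f \in U$; I must verify that $g := f^{-1}$ satisfies conditions i., ii., iii.\ of Definition \ref{defn: P-morphism}. Condition i.\ for $g$ follows because $f$ maps $\partial_{I}M_{a}$ \emph{onto} $\partial_{I}M_{b}$ for each $I$ (which follows from i.\ and ii.\ for $f$, since the collars detect exactly the stratum a point lies in via the number of vanishing normal coordinates). Condition ii.\ for $g$ follows by inverting the collar identity in ii.\ for $f$: from $f \circ h^{a}_{I}(x,\bar t) = h^{b}_{I}(f|_{\partial_{I}M_{a}}(x),\bar t)$ on $[0,\bar\epsilon)^{k+1}_{I}$ one solves for $h^{a}_{I}$ in terms of $g$ and $h^{b}_{I}$, and since $f|_{\partial_{I}M_{a}}$ is itself a diffeomorphism onto $\partial_{I}M_{b}$, its inverse $g|_{\partial_{I}M_{b}}$ is smooth, giving the analogous collar identity for $g$ (possibly on a smaller $\bar\epsilon$). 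Condition iii.\ for $g$ follows similarly by inverting the factorization $f|_{\partial_{I}M_{a}} = (\psi^{a}_{I})^{-1}\circ(f_{\beta_{I}}\times \mathrm{Id}_{P^{I}})\circ\psi^{b}_{I}$: taking inverses of both sides and using that $f_{\beta_{I}}$ is a diffeomorphism (again because $f$ is) produces the required factorization of $g|_{\partial_{I}M_{b}}$ with $g_{\beta_{I}} = (f_{\beta_{I}})^{-1}$. Hence $g$ is a $\Sigma^{\ell}_{k}$-morphism, so $f$ is a $\Sigma^{\ell}_{k}$-diffeomorphism and $f \in \Diff_{\Sigma^{\ell}_{k}}(M_{a},M_{b})$.

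Combining the two steps, $\Diff_{\Sigma^{\ell}_{k}}(M_{a},M_{b}) = U$ is open in $C^{\infty}_{\Sigma^{\ell}_{k}}(M_{a},M_{b})$, which is the claim. The main obstacle — and the only place real care is needed — is the second step: verifying that the inverse of a structure-preserving ordinary diffeomorphism automatically preserves the structure. The reason it works is essentially that all the structure data in Definition \ref{defn: closed Manifold def} (strata, collars, product factorizations) are \emph{rigid} in the sense that a map respecting them restricts to diffeomorphisms of all the relevant submanifolds and intertwines the relevant embeddings, and any such intertwining identity can be inverted once one knows the underlying maps are invertible. I would simply note that this argument is formally identical to that of \cite[Proposition 3.1]{P 13}, with the single index $k=1$ there replaced by the lattice of subsets $I \subseteq \widehat{\langle k\rangle}$, and the verification is unchanged in substance — only the bookkeeping over subsets is new, and this poses no difficulty since the compatibility conditions (a) and (b) in Definition \ref{defn: closed Manifold def} ensure the collar data for different $I$ fit together coherently.
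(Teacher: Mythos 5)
Your argument is correct, and since the paper gives no proof of this proposition beyond deferring to \cite[Proposition 3.1]{P 13}, your first step is exactly the intended one: inside $C^{\infty}_{\Sigma^{\ell}_{k}}(M_{a},M_{b})$ the ordinary-diffeomorphism condition is open by the usual compactness argument, and condition i.\ of Definition \ref{defn: P-morphism} already pins the strata down so that the manifold-with-corners issues you flag do not arise. However, the ``subtlety'' around which you organize the proof is not actually present in this paper: by the definition of a $\Sigma^{\ell}_{k}$-diffeomorphism given here, $f$ is a $\Sigma^{\ell}_{k}$-diffeomorphism if and only if it is a $\Sigma^{\ell}_{k}$-morphism \emph{and} a diffeomorphism of the underlying smooth manifolds --- no condition is imposed on $f^{-1}$. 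So $\Diff_{\Sigma^{\ell}_{k}}(M_{a},M_{b})$ \emph{is} literally the intersection of $C^{\infty}_{\Sigma^{\ell}_{k}}(M_{a},M_{b})$ with the set of ordinary diffeomorphisms, your set $U$ equals $\Diff_{\Sigma^{\ell}_{k}}(M_{a},M_{b})$ by definition, and your first paragraph already completes the proof. Your second step --- inverting the collar identities and the product factorizations to show $f^{-1}$ is again a $\Sigma^{\ell}_{k}$-morphism --- is correct and not wasted effort, but its role is elsewhere: it is what justifies the paper's assertion that $\Diff_{\Sigma^{\ell}_{k}}(M)$ is a topological group under composition, not what is needed for openness.
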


\subsection{Embeddings} \label{eq: P-embeddings}
We will need to consider certain spaces of embeddings of $\Sigma^{\ell}_{k}$-manifolds. 
For each $i \in \N$, we fix once
and for all integers $m_{i}$ with $m_{i} > p_{i} = \dim(P_{i})$, and smooth
embeddings
\begin{equation} \label{eq: P-embeddings 1} 
\phi_{i}: P_{i} \longrightarrow \R^{p_{i} + m_{i}},
\end{equation}
where recall, $\Sigma = (P_{1}, \dots, P_{i}, \dots)$ is the list of closed manifolds that we fixed in the beginning of Section \ref{section: The Category 1}. 
All of our constructions henceforth will depend on this choice of embeddings. 
\begin{Notation} \label{notation: embeddings} The following are some notational conventions that we will use throughout the rest of the paper when dealing with embedded $\Sigma^{\ell}_{k}$ manifolds. 
\begin{itemize}
\itemsep.3cm
\item
With $p_{i} = \dim(P_{i})$ and $m_{i}$ the integers chosen from (\ref{eq: P-embeddings 1}), we set
$$\bar{p} := \sum_{i\in \langle k \rangle}p_{i} \quad \text{and} \quad \bar{m} := \sum_{i\in \langle k \rangle}m_{i}.$$
\item 
For each subset $I \subseteq \langle k \rangle$, we denote 
$$p_{I} := \sum_{i\in I}p_{i} \quad \text{and} \quad m_{I} := \sum_{i\in I}m_{i}.$$
\item
If $n$ is any non-negative integer, we set
$\widehat{n} := n - \bar{p} - \bar{m}.$
\item
 For each subset $I \subseteq \langle k \rangle$, we set
 $$\R^{\bar{p}+\bar{m}}_{I} \; := \; \{(x_{1}, \dots, x_{k}) \in \R^{p_{1}+m_{1}}\times\cdots\times\R^{p_{k}+m_{k}} \; | \; \text{$x_{i} = 0$ if $i \notin I$}\}.$$ 
 \item
 Using the identification $\R^{\bar{p} + \bar{m}}_{I} = \prod_{i \in I}\R^{p_{i}+m_{i}}$,
 we let
$ 
\phi_{I}: P^{I} \longrightarrow \R^{\bar{p} + \bar{m}}_{I}
$
denote the embedding given by the product $\prod_{i\in I}\phi_{P_{i}}$. 
\item
For each $k \in \N$, we let $\R^{k}_{+}$ denote the product space $[0, \infty)^{k}$. 
For each subset $I \subseteq \langle k \rangle$, we let $\R^{k}_{+, I}$ denote the subspace of $\R^{k}_{+}$ given by 
$\{(x_{1}, \dots, x_{k}) \in \R^{k}_{+} \; | \; \text{$x_{i} = 0$ if $i \notin I$}\; \}.$
\end{itemize}
\end{Notation}
For what follows, let $X$ be a smooth manifold without boundary and let $M$ be a (possibly non-compact) $\Sigma^{\ell}_{k}$-manifold with $\partial_{0}M = \emptyset$. 
\begin{defn} \label{defn: i-P maps}
We define $\mathcal{E}_{\Sigma^{\ell}_{k}, n}(M, X)$ to be the space of smooth embeddings
$$f: M \longrightarrow X\times\R^{k}_{+}\times\R^{\widehat{n}}\times\R^{\bar{p}+\bar{m}}$$ 
that satisfy the following conditions:
\begin{enumerate} \itemsep.2cm
\item[i.]  For each subset $I \subseteq \langle k \rangle$, we have
$
f(\partial_{I}M) \subseteq \; X\times\R^{k}_{+, I^c }\times\R^{\widehat{n}}\times\R^{\bar{p}+\bar{m}}.  
$

\item[ii.] 
There exists a list of positive real numbers $\bar{\epsilon} = (\epsilon_{1}, \dots, \epsilon_{k})$ such that for every subset $I \subseteq \langle k \rangle$,
 the following diagram commutes
$$\xymatrix{
\partial_{I}M\times[0,\bar{\epsilon})^{k}_{I} \ar[d]^{\text{proj.}} \ar[rrrr]^{f\circ h_{I}} &&&& X\times\R^{k}_{+, I}\times(\R^{k}_{+, I^{c}}\times\R^{\widehat{n}}\times\R^{\bar{p}+\bar{m}}) \ar[d]^{\text{proj.}}\\
[0,\bar{\epsilon})^{k}_{I} \ar[rrrr]^{i_{I}} &&&& X\times\R^{k}_{+, I}}$$
where $i_{I}: [0, \bar{\epsilon})^{k}_{I} \hookrightarrow \R^{k}_{+, I}$ is the standard inclusion. 
In the upper-right corner we are using the identification,
$(\R^{k}_{+, I}\times\R^{k}_{+, I^{c}})\times\R^{\widehat{n}}\times\R^{\bar{p}+\bar{m}} \; = \; \R^{k}_{+}\times\R^{\widehat{n}}\times\R^{\bar{p}+\bar{m}}.$
 \item[iii.] For each subset $I \subseteq \langle \ell \rangle$, there is a factorization:
$f\mid_{\partial_{I}M} \; = \; f_{\beta_{I}M}\times \phi_{I}$
where the $\phi_{I}$ are the product embeddings specified in Notation Convention \ref{notation: embeddings} and 
$$f_{\beta_{I}M}: \beta_{I}M \longrightarrow  X\times\R^{k}_{+, I^{c}}\times\R^{\widehat{n}}\times\R^{\bar{p}+\bar{m}}_{I^{c}}$$
is an embedding which satisfies conditions i. and ii. given above.
\end{enumerate}
The space $\mathcal{E}_{\Sigma^{\ell}_{k}, n}(M, X)$ is topologized as a subspace of the space of smooth maps 
$$M \longrightarrow X\times\R^{k}_{+}\times\R^{\widehat{n}}\times\R^{\bar{p}+\bar{m}}$$  
in the $C^{\infty}$-topology.
We let $\mathcal{E}_{\Sigma^{\ell}_{k}, n}(M)$ denote the space $\mathcal{E}_{\Sigma^{\ell}_{k}, n}(M, \text{pt.})$, i.e.\ the space of embeddings $M \rightarrow \R^{k}_{+}\times\R^{\widehat{n}}\times\R^{\bar{p}+\bar{m}}$ that satisfy the conditions specified above. 
\end{defn} 

By the following proposition, we are justified in excluding the embeddings (\ref{eq: P-embeddings 1}) from the notation. 
This is proven in the same way as in \cite[Proposition 3.2]{P 13}.
\begin{proposition} \label{prop: independence of embedding}
Let $M$ be a $\Sigma^{\ell}_{k}$-manifold with $\partial_{0}M = \emptyset$, let $X$ be a smooth manifold, and let $n$ be positive integer. 
Then the homeomorphism type of the space $\mathcal{E}_{\Sigma^{\ell}_{k}, n}(M, X)$ does not depend on the embeddings $\phi_{i}: P_{i} \hookrightarrow \R^{p_{i}+m_{i}}$ used to define it. 
\end{proposition}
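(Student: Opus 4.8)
The plan is to reduce the statement to the corresponding independence result for ordinary embedding spaces (with corners), which is implicitly what \cite[Proposition 3.2]{P 13} handles, by showing that any two choices of the auxiliary embeddings $\phi_i\colon P_i \hookrightarrow \R^{p_i+m_i}$ are connected by a path in a suitable space of embeddings, and that such a path induces a homeomorphism of the corresponding $\mathcal{E}$-spaces. Since the claim is only about homeomorphism type, not about canonical identifications, it suffices to produce, for each pair of choices $\{\phi_i\}$ and $\{\phi_i'\}$, an explicit homeomorphism of the associated spaces.

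First I would fix the two systems of embeddings $\phi_i, \phi_i'\colon P_i \hookrightarrow \R^{p_i+m_i}$. Because $m_i > p_i$, a standard general-position argument (Whitney) shows that, after stabilizing or directly since the codimension is positive, $\phi_i$ and $\phi_i'$ are isotopic through embeddings into $\R^{p_i + m_i}$ — more precisely, one should allow an ambient isotopy of $\R^{p_i+m_i}$, possibly composed with reflections, but since we only need a homeomorphism between spaces rather than a canonical one, it is cleanest to first observe that any embedding $\phi_i$ extends to a tubular-neighborhood embedding, and then to use that the space of embeddings $P_i \hookrightarrow \R^{p_i+m_i}$ that are "linearly standard at infinity" is path-connected in the relevant range, or simply to pick any ambient diffeomorphism $\Theta_i\colon \R^{p_i+m_i} \to \R^{p_i+m_i}$ carrying $\phi_i(P_i)$ to $\phi_i'(P_i)$ compatibly with the parametrizations. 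Taking products over $i \in I$ gives ambient diffeomorphisms $\Theta_I$ of the factors $\R^{\bar p + \bar m}_I$, compatible with the inclusions $\R^{\bar p + \bar m}_I \hookrightarrow \R^{\bar p + \bar m}$ as $I$ varies.

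Next I would define the map on $\mathcal{E}$-spaces: given $f \in \mathcal{E}_{\Sigma^\ell_k,n}(M,X)$ built with the embeddings $\{\phi_i\}$, post-compose with $\mathrm{Id}_{X \times \R^k_+ \times \R^{\widehat n}} \times \Theta$, where $\Theta$ is the ambient diffeomorphism of $\R^{\bar p + \bar m}$ assembled from the $\Theta_i$. One then checks that the three defining conditions of Definition \ref{defn: i-P maps} are preserved: condition i. is untouched since $\Theta$ fixes the $\R^k_+$-directions; condition ii. likewise, since $\Theta$ does not interact with the collar directions or the $[0,\bar\epsilon)^k_I$-factors; and condition iii. is exactly where the choice matters — the factorization $f|_{\partial_I M} = f_{\beta_I M}\times \phi_I$ is carried to $f_{\beta_I M}' \times \phi_I'$ where $f_{\beta_I M}'$ is $f_{\beta_I M}$ post-composed with the complementary diffeomorphism $\Theta_{I^c}$ on $\R^{\bar p + \bar m}_{I^c}$, using the compatibility of the $\Theta_I$ with the splitting $\R^{\bar p+\bar m} = \R^{\bar p+\bar m}_I \times \R^{\bar p+\bar m}_{I^c}$. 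This construction is continuous in the $C^\infty$-topology and its inverse is given by $\Theta^{-1}$, so it is a homeomorphism.

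The main obstacle is the verification in condition iii.\ that the structural factorizations match up coherently across all subsets $I \subseteq \langle \ell \rangle$ simultaneously — one needs the family $\{\Theta_I\}$ to be compatible with all the inclusions $\R^{\bar p+\bar m}_I \hookrightarrow \R^{\bar p+\bar m}_J$ for $I \subseteq J$, and to split compatibly into $\Theta_I$ on $\R^{\bar p+\bar m}_I$ and $\Theta_{I^c}$ on the complement; this forces $\Theta$ to be a product $\prod_i \Theta_i$ over the individual factors, which is why one must arrange the $\phi_i \mapsto \phi_i'$ correction one index at a time. Granting that product structure, everything else is the same bookkeeping as in \cite[Proposition 3.2]{P 13}, and I would simply remark that the argument there goes through verbatim after this reduction.
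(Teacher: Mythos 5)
Your argument is essentially the paper's own: the proof is deferred to \cite[Proposition 3.2]{P 13}, which likewise produces ambient diffeomorphisms $\Theta_{i}$ of the factors $\R^{p_{i}+m_{i}}$ carrying $\phi_{i}$ to $\phi_{i}'$ (isotopy plus isotopy extension) and post-composes, checking that conditions i.--iii.\ of Definition \ref{defn: i-P maps} are preserved, with the product structure $\Theta = \prod_{i}\Theta_{i}$ guaranteeing compatibility of the factorizations over all $I$. The one caveat --- which the paper shares, since it asserts the same isotopy claim when discussing $\tau_{P}$ --- is that $m_{i} > p_{i}$ only yields ambient dimension $p_{i}+m_{i} \geq 2p_{i}+1$, the borderline where embeddings can knot (e.g.\ $P_{i}=S^{1}$ in $\R^{3}$), so the existence of $\Theta_{i}$ genuinely requires $m_{i}\geq p_{i}+2$ in general (or $p_{i}\geq 2$, by Wu's unknotting theorem in codimension $p_{i}+1$).
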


For each $n$ there is a natural embedding $\mathcal{E}_{\Sigma^{\ell}_{k}, n}(M, X) \hookrightarrow \mathcal{E}_{\Sigma^{\ell}_{k}, n+1}(M, X)$. 
We then define,
\begin{equation}
\xymatrix{
\mathcal{E}_{\Sigma^{\ell}_{k}}(M, X) := {\displaystyle \colim_{n\to\infty}}\;\mathcal{E}_{\Sigma^{\ell}_{k}, n}(M, X).
}
\end{equation}
Suppose that $M$ is a $\Sig{k}{\ell}$-manifold. 
Since $M$ is automatically a $\Sig{k}{\ell-1}$-manifold as well, the space 
$\mathcal{E}_{\Sig{k}{\ell-1}}(M)$
is defined.
We have a commutative diagram, 
\begin{equation} \label{eq: embeddings pull-back}
\xymatrix{
\mathcal{E}_{\Sig{k}{\ell}}(M) \ar[d]^{\beta_{\ell}} \ar@{^{(}->}[rr]  && \mathcal{E}_{\Sig{k}{\ell-1}}(M) \ar[d]^{\partial_{\ell}} \\
\mathcal{E}_{\Sig{k-1}{\ell-1}}(\beta_{\ell+1}M) \ar[rr]^{\times \phi_{\ell}} && \mathcal{E}_{\Sig{k-1}{\ell-1}}(\partial_{\ell+1}M)
}
\end{equation}
where the right vertical map sends an embedding
$g: M \rightarrow \R^{k}_{+}\times\R^{\infty}\times\R^{\bar{p}+\bar{m}}$
to its restriction $g\mid_{\partial_{\ell}M}$. 
The bottom horizontal map sends an embedding 
$$f: \beta_{\ell}M \longrightarrow \R^{k}_{+, \{\ell\}^{c}}\times\R^{\infty}\times\R^{\bar{p}+\bar{m}}_{\{\ell\}^{c}}$$
to the product embedding 
$$f\times \phi_{\ell}: (\beta_{\ell}M)\times P_{\ell}  \; \longrightarrow \; (\R^{k}_{+,\{\ell\}^{c}}\times\R^{\infty}\times\R^{\bar{p}+\bar{m}}_{\{\ell\}^{c}})\times\R^{\bar{p}+\bar{m}}_{\{\ell\}}.
$$ 
The top horizontal map is the inclusion. 
It follows easily that (\ref{eq: embeddings pull-back}) is a pull-back square.
The proof of the following lemma is given in Appendix \ref{embedding restriction}. 
\begin{lemma} \label{lemma: serre fibration embeddings 1} 
Let $M$ be a closed $\Sigma^{\ell}_{k}$-manifold. 
Then the restriction map 
 $$\partial_{\ell}: \mathcal{E}_{\Sig{k}{\ell-1}}(M) \; \longrightarrow \; \mathcal{E}_{\Sig{k-1}{\ell-1}}(\partial_{\ell}M)$$
 is a Serre-fibration.
 \end{lemma}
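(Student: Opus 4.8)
The plan is to deduce that $\partial_\ell$ is a Serre fibration from a parametrized isotopy-extension argument for $\Sigma^\ell_k$-manifolds, in the spirit of the classical proof that a restriction-to-a-submanifold map on embedding spaces is a fibration. Concretely, I want to show that $\partial_\ell$ has the homotopy lifting property with respect to the cube $D^j$, i.e. given a family of embeddings $f_s \in \mathcal{E}_{\Sig{k}{\ell-1}}(M)$ for $s$ in the base of the lifting square together with a lift over $D^j \times \{0\}$ and a path of embeddings of $\partial_\ell M$ extending $\partial_\ell f_s$, I must produce a compatible path of embeddings of all of $M$. The key point is that an ambient isotopy of the target $\R^k_+ \times \R^\infty \times \R^{\bar p + \bar m}$ realizing the given motion of $\partial_\ell M$ can be built equivariantly with respect to the whole collar/structure-map apparatus of Definition \ref{defn: closed Manifold def} and Definition \ref{defn: i-P maps}, and then pushing $f_s$ along that ambient isotopy gives the required lift.

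The steps, in order, would be as follows. First, I would reduce to lifting against $(D^j, D^j \times \{0\})$ using the standard adjunction/retraction characterization of Serre fibrations. Second, I would invoke the parametrized isotopy extension theorem (Cerf/Palais) for embeddings into the open manifold $\R^k_+ \times \R^\infty \times \R^{\bar p + \bar m}$ — but applied carefully along the strata: the motion of $\partial_\ell M$ induces, by restriction, a motion of each $\partial_{I}M$ and $\beta_I M$ for $I$ containing $\ell$, and one extends these inward one stratum at a time, starting from the deepest corners, using the collar embeddings $h_I$ to interpolate between adjacent strata. Third, I would check that the resulting ambient isotopy can be chosen to (a) respect the product structure $\R^k_+ = \R^k_{+,I} \times \R^k_{+,I^c}$ required by condition ii of Definition \ref{defn: i-P maps}, so that collars are carried to collars, and (b) respect the factorization $f|_{\partial_I M} = f_{\beta_I M} \times \phi_I$ over $P^I$ required by condition iii, by taking the isotopy of the $P^I$-factor to be constant — this is where the fixed embeddings $\phi_i$ and the product structure of the normal directions are used. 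Fourth, having built the ambient isotopy $\Phi_s$ of the target, I set the lift to be $\tilde f_s = \Phi_s \circ f_s$; one then verifies $\tilde f_s \in \mathcal{E}_{\Sig{k}{\ell-1}}(M)$, that it restricts correctly over $D^j \times \{0\}$, and that $\partial_\ell \tilde f_s$ equals the prescribed path, completing the lifting square.

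The main obstacle will be step three: making the ambient isotopy simultaneously compatible with \emph{all} the collar conditions and \emph{all} the $P^I$-factorizations, coherently across the lattice of subsets $I \subseteq \kseto{k}$. Isotopy extension on a single stratum is routine, but the $\Sigma^\ell_k$-structure forces a nested system of compatibility constraints (conditions ii(a), ii(b) of Definition \ref{defn: closed Manifold def} and conditions ii, iii of Definition \ref{defn: i-P maps}), and one must extend the isotopy stratum-by-stratum in a way that never violates a constraint imposed at a deeper or shallower stratum. I would handle this by an induction on $|I|$ (equivalently, on corner codimension), at each stage using a collar $h_I$ together with a partition of unity adapted to the $\bar\epsilon$-collar neighborhoods to glue the already-constructed isotopy on $\partial_I M$ to the identity away from the collar, keeping the $P^I$- and $\R^k_{+,I}$-directions fixed throughout; the pull-back square \eqref{eq: embeddings pull-back} is exactly the bookkeeping device that guarantees the inductively constructed pieces assemble into a genuine point of $\mathcal{E}_{\Sig{k}{\ell-1}}(M)$. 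Compactness of $M$ ensures the relevant $\bar\epsilon$ can be chosen uniformly over the compact parameter space, so no convergence issues arise.
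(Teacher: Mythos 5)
Your proposal is correct in outline but takes a genuinely different route from the paper. The paper defers the proof to Appendix A (Lemma \ref{lemma: serre fibration embeddings}), where it establishes the \emph{stronger} statement that at each finite stage $n$ the restriction map is a locally trivial fibre bundle, by Lima's method rather than by isotopy extension: for $f$ in the base one chooses a tubular neighbourhood of $f(\partial_{1}M)$ with projection $\pi$, a neighbourhood $U$ of $f$ consisting of embeddings landing in that tube, and an explicit family of ambient maps
$$\eta(g,t)(\bar y) \; = \; \bar y + \rho(t)\cdot\lambda\big(\dist(\bar y, f(\partial_{1}M))\big)\cdot\big(g(f^{-1}(\pi(\bar y))) - \pi(\bar y)\big),$$
damped off in the normal directions by $\lambda$ and in the collar coordinate by $\rho$; openness of the diffeomorphism group inside the $\Sigma^{\ell}_{k}$-mapping space (Proposition \ref{prop: P-diff open set}) guarantees that $\eta(g)$ is a compactly supported $\Sigma^{0}_{k}$-diffeomorphism for $g$ sufficiently close to $f$, and $(g,\hat f)\mapsto \eta(g)\circ\hat f$ is then a trivialization of the restriction map over $U$. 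So the paper gets a local section of the restriction map valued in ambient diffeomorphisms by an explicit formula, while you verify the homotopy lifting property by invoking parametrized isotopy extension and then grinding through a stratum-by-stratum induction. Both arguments confront the identical crux — making the ambient diffeomorphism respect all collars and all $P^{I}$-factorizations simultaneously — and resolve it the same way in spirit (keep the $P^{I}$- and deeper-face directions fixed, damp with collar coordinates); indeed the paper only writes out the $\ell=0$ corners case and asserts the general case is "proven in exactly the same way," so your explicit attention to condition iii of Definition \ref{defn: i-P maps} is, if anything, more careful. What the paper's route buys is concreteness (an explicit formula, hence local triviality and not merely the HLP, with no black-box appeal to Cerf/Palais); what yours buys is that it proves exactly what the lemma asserts with standard machinery. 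One point you should make explicit: the lemma concerns the colimit spaces $\mathcal{E}_{\Sig{k}{\ell-1}}(M) = \colim_{n}\mathcal{E}_{\Sig{k}{\ell-1},n}(M)$, so your lifting data over the compact space $D^{j}\times[0,1]$ must first be factored through a finite stage $n$ before any finite-dimensional isotopy extension can be applied; your closing remark about compactness gestures at this but does not quite say it.
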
 
The above lemma implies that the pull-back square (\ref{eq: embeddings pull-back}) is \textit{homotopy cartesian}. 
\begin{theorem} \label{thm: contractibility} 
Let $0 \leq \ell \leq k$ be integers and let $M$ be a closed $\Sigma^{\ell}_{k}$-manifold. 
Then the space $\mathcal{E}_{\Sig{k}{\ell}}(M)$ is weakly
contractible.
 \end{theorem}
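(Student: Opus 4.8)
The plan is to prove weak contractibility by induction on $\ell$, using the homotopy-cartesian square (\ref{eq: embeddings pull-back}) as the inductive engine. The base case $\ell = 0$ asserts that the space of embeddings of a closed manifold with corners $M$ into $\R^k_+\times\R^\infty\times\R^{\bar p+\bar m}$ — subject only to the collaring conditions i.\ and ii.\ of Definition \ref{defn: i-P maps} (there are no $\Sigma^{\ell}_k$-product conditions when $\ell=0$) — is weakly contractible. This is the ``corners'' analogue of the classical Whitney-type fact that $\Emb(M,\R^\infty)$ is contractible; I would prove it by the standard argument: given a compact family of such embeddings parametrized by a sphere, one shifts into fresh coordinates and builds an affine homotopy to a fixed standard embedding, taking care that the collar-compatibility condition ii.\ is preserved throughout (the shifting and straight-line homotopies can be performed coordinatewise in the $\R^k_+$-directions so that the commuting square in ii.\ persists). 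Since this is precisely the $\ell=0$, point-target case treated in \cite{P 13} and \cite{G 08} in spirit, I would cite or lightly adapt those arguments.

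For the inductive step, assume the theorem holds for $\ell - 1$ and all $k$. Let $M$ be a closed $\Sigma^{\ell}_k$-manifold. By Lemma \ref{lemma: serre fibration embeddings 1}, the right vertical map $\partial_\ell$ in (\ref{eq: embeddings pull-back}) is a Serre fibration, so the pull-back square is homotopy cartesian; hence $\mathcal{E}_{\Sig{k}{\ell}}(M)$ is weakly equivalent to the homotopy pull-back of
$$
\mathcal{E}_{\Sig{k-1}{\ell-1}}(\beta_{\ell+1}M) \xrightarrow{\ \times\phi_\ell\ } \mathcal{E}_{\Sig{k-1}{\ell-1}}(\partial_{\ell+1}M) \xleftarrow{\ \partial_\ell\ } \mathcal{E}_{\Sig{k}{\ell-1}}(M).
$$
By the inductive hypothesis applied to the $\Sigma^{\ell-1}_{k-1}$-manifolds $\beta_{\ell+1}M$ and to the $\Sigma^{\ell-1}_k$-manifold $M$ — noting that both $\beta_{\ell+1}M$ and $\partial_{\ell+1}M$ are closed $\Sigma^{\ell-1}_{k-1}$-manifolds, and $M$ itself is a closed $\Sigma^{\ell-1}_k$-manifold — the two outer spaces $\mathcal{E}_{\Sig{k-1}{\ell-1}}(\beta_{\ell+1}M)$ and $\mathcal{E}_{\Sig{k}{\ell-1}}(M)$ are weakly contractible. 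Therefore the homotopy pull-back is weakly equivalent to the based loop space (or rather the homotopy fibre over a point) of $\mathcal{E}_{\Sig{k-1}{\ell-1}}(\partial_{\ell+1}M)$; more precisely, a homotopy pull-back of a diagram in which two of the three corners are contractible is weakly equivalent to $\Omega$ of the third corner. So it remains to show $\mathcal{E}_{\Sig{k-1}{\ell-1}}(\partial_{\ell+1}M)$ is weakly contractible — but this is again the inductive hypothesis for $\ell - 1$, since $\partial_{\ell+1}M$ is a closed $\Sigma^{\ell-1}_{k-1}$-manifold. Thus all three corners are contractible, so the homotopy pull-back is contractible, completing the induction.

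One must be slightly careful about indices: the diagram (\ref{eq: embeddings pull-back}) as written relates $\ell$ to $\ell-1$ with the $\beta_{\ell+1}M$ and $\partial_{\ell+1}M$ appearing in the bottom row (this is an artifact of the indexing conventions for which singularity strata are ``split off''), and one should double-check that the induction is genuinely on the number of manifolds $P_i$ that are not forced to be points, i.e.\ decreasing $\ell$ strips away one genuine product structure at a time until one reaches the corners case $\ell = 0$. Since $\beta_{\ell+1}M$, $\partial_{\ell+1}M$, and $M$-viewed-as-$\Sigma^{\ell-1}_k$ all have strictly fewer genuine singularity strata than $M$-viewed-as-$\Sigma^{\ell}_k$ — or the same underlying dimension data with $\ell$ decreased — the induction is well-founded.

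\textbf{Main obstacle.} The genuinely delicate point is not the homotopy pull-back bookkeeping but the base case and, implicitly, Lemma \ref{lemma: serre fibration embeddings 1}: one needs the restriction-to-boundary-stratum map on embedding spaces to be a Serre fibration \emph{compatibly with all the collar and product structure constraints simultaneously}, and one needs the $\ell=0$ contractibility argument to respect condition ii.\ of Definition \ref{defn: i-P maps} (the collar-compatibility of the embedding with the coordinate projections onto $\R^k_{+,I}$). The affine-homotopy trick that proves contractibility of ordinary embedding spaces must be run ``parametrized over the corner directions,'' and verifying that the homotopy stays inside $\mathcal{E}_{\Sig{k}{0},n}(M,X)$ for large $n$ — in particular that the straight-line homotopy between an embedding and its pushforward into fresh coordinates never destroys the collar condition — is where the real work lies. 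Fortunately Lemma \ref{lemma: serre fibration embeddings 1} is proven in the appendix, and the base case is morally the same as \cite[Theorem 3.3 / its analogue]{P 13}, so I would lean on those.
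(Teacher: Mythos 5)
Your proposal is correct and is essentially the paper's own argument: the proof there also reduces everything to the contractibility of the corner-embedding spaces ($\ell=0$, cited from Genauer's Theorem 2.7) and then bootstraps via the homotopy-cartesian square (\ref{eq: embeddings pull-back}), which is homotopy cartesian because the restriction map is a Serre fibration (Lemma \ref{lemma: serre fibration embeddings 1}). The only cosmetic difference is that the paper phrases the bootstrap as a (double) induction on $k$ with an inner induction on $\ell$, whereas your single induction on $\ell$ for all $k$ simultaneously is, if anything, a cleaner organization of the same argument.
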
 
 \begin{proof}
 We prove this by induction on $k$. 
To prove the base case, let $k = 1$. 
It follows from \cite[Theorem 2.7]{G 08} that the spaces $\mathcal{E}_{\Sig{k}{0}}(M)$, $\mathcal{E}_{\Sig{k}{0}}(\partial_{1}M)$, and $\mathcal{E}_{\Sig{k}{0}}(\beta_{1}M)$ are all weakly contractible (these are spaces of \textit{neat embeddings of manifolds with corners}).
We then apply Lemma \ref{lemma: serre fibration embeddings 1} to see that the diagram 
$$\xymatrix{
\mathcal{E}_{\Sig{k}{1}}(M) \ar[d]^{\beta_{1}} \ar@{^{(}->}[rr]  && \mathcal{E}_{\Sig{k}{0}}(M) \ar[d]^{\partial_{1}} \\
\mathcal{E}_{\Sig{k-1}{0}}(\beta_{1}M) \ar[rr]^{\times \phi_{1}} && \mathcal{E}_{\Sig{k-1}{0}}(\partial_{1}M)
}$$
is a homotopy-cartesian square (this follows from the fact that the right-vertical map is a Serre-fibration and the fact that the square is already a pull-back). 
Since the bottom-left, bottom-right, and top-right spaces are all weakly contractible, it follows that the upper-left space is weakly contractible as well. 
This proves the base case of the induction. 
Now let $k \in \N$ be arbitrary, and suppose that $\mathcal{E}_{\Sig{k-1}{\ell}}(M)$ is weakly contractible for $\ell = 0, \dots, k$. 
By the Lemma \ref{lemma: serre fibration embeddings}, the diagram 
$$
\xymatrix{
\mathcal{E}_{\Sig{k}{\ell+1}}(M) \ar[d]^{\beta_{\ell+1}} \ar@{^{(}->}[rr]  && \mathcal{E}_{\Sig{k}{\ell}}(M) \ar[d]^{\partial_{\ell+1}} \\
\mathcal{E}_{\Sig{k-1}{\ell}}(\beta_{\ell+1}M) \ar[rr]^{\times \phi_{l+1}} && \mathcal{E}_{\Sig{k-1}{\ell}}(\partial_{\ell+1}M)
}
$$
is weakly homotopy-cartesian for all $\ell$. 
By the induction hypothesis, the bottom-left, bottom-right, and upper-right spaces are weakly contractible. 
It then follows that the upper-right space is weakly contractible as well. 
This completes the proof of the theorem.
\end{proof}
 We define similar embedding spaces for $\Sigma^{\ell}_{k}$-bordism triples. 
\begin{defn} \label{defn: space of embeddings morphisms} 
Let $(W, M_{a}, M_{b})$ be a $\Sigma^{\ell}_{k}$-bordism triple and let $n$ be a non-negative integer. 
For any smooth manifold without boundary $X$, we define $\mathcal{E}_{\Sigma^{\ell}_{k}, n}((W, M_{a}, M_{b}), X)$ to be the space of smooth embeddings 
   $$f: W \longrightarrow X\times[0, 1]\times \R^{k}_{+}\times\R^{\widehat{n}}\times\R^{\bar{p}+\bar{m}},$$
subject to the following conditions:
\begin{enumerate} \itemsep.2cm
\item[i.]   For each subset $I \subseteq \langle k \rangle$ we have, 
$$\xymatrix@-2pc@R-.25pc{
\; \; \; \; \; \;  f(\partial_{I}W) \subset \; X\times [0, 1]\times\R^{k}_{+, I^c}\times\R^{\widehat{n}}\times\R^{\bar{p}+\bar{m}},\\
f(M_{a}) \; \subset \; X\times\{0\}\times\R^{k}_{+}\times\R^{\widehat{n}}\times\R^{\bar{p}+\bar{m}}, \\ 
f(M_{b}) \; \subset \; X\times\{1\}\times\R^{k}_{+}\times\R^{\widehat{n}}\times\R^{\bar{p}+\bar{m}}.
}
$$
Furthermore, the restriction of $f$ to $M_{\nu}$, for $\nu = a, b$, is an element of the space 
$\mathcal{E}_{\Sigma^{\ell}_{k}, n}(M_{\nu}, X).$

\item[ii.] For each subset $I \subseteq \kset{k}$, the restriction map $f|_{\partial_{I}M}$ respects the collars in the same way as in Definition \ref{defn: i-P maps}. 
\item[iii.] For each subset $I \subseteq \langle \ell \rangle$, there is a
  factorization
$f\mid_{\partial_{I}M} \; = \; f_{\beta_{I}M}\times \phi_{I}$
just as in condition iii. of Definition \ref{defn: i-P maps}.
\end{enumerate}
\end{defn}
As before, we define
$$\mathcal{E}_{\Sigma^{\ell}_{k}}((W, M_{a}, M_{b}), X) := \colim_{n\to\infty}\mathcal{E}_{\Sigma^{\ell}_{k}, n}((W, M_{a}, M_{b}), X).$$
As before, we let $\mathcal{E}_{\Sigma^{\ell}_{k}}(W, M_{a}, M_{b})$ denote the space $\mathcal{E}_{\Sigma^{\ell}_{k}}((W, M_{a}, M_{b}), \text{pt.})$. 
The following theorem is proven in exactly the same way as Theorem \ref{thm: contractibility}. 
\begin{theorem} \label{theorem: weakly contractable, bordism space}
Let $(W, M_{a}, M_{b})$ be a $\Sigma^{\ell}_{k}$-cobordism triple. 
Then the space $\mathcal{E}_{\Sigma^{\ell}_{k}}(W, M_{a}, M_{b})$ is weakly contractable. 
\end{theorem}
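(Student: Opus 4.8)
The plan is to repeat the proof of Theorem~\ref{thm: contractibility} essentially word for word, with closed $\Sig{k}{\ell}$-manifolds replaced throughout by $\Sig{k}{\ell}$-cobordism triples. The first step is to set up the cobordism-triple analogue of the pull-back square~(\ref{eq: embeddings pull-back}). Given a $\Sig{k}{\ell+1}$-cobordism triple $(W, M_{a}, M_{b})$, the triples $(\partial_{\ell+1}W, \partial_{\ell+1}M_{a}, \partial_{\ell+1}M_{b})$ and $(\beta_{\ell+1}W, \beta_{\ell+1}M_{a}, \beta_{\ell+1}M_{b})$ are $\Sig{k-1}{\ell}$-cobordism triples, and the operations of restricting an embedding to $\partial_{\ell+1}W$ and of recording the $P_{\ell+1}$-product factorization along $\partial_{\ell+1}W$ assemble into a commutative square
$$
\xymatrix{
\mathcal{E}_{\Sig{k}{\ell+1}}(W, M_{a}, M_{b}) \ar[d]^{\beta_{\ell+1}} \ar@{^{(}->}[rr] && \mathcal{E}_{\Sig{k}{\ell}}(W, M_{a}, M_{b}) \ar[d]^{\partial_{\ell+1}} \\
\mathcal{E}_{\Sig{k-1}{\ell}}(\beta_{\ell+1}W, \beta_{\ell+1}M_{a}, \beta_{\ell+1}M_{b}) \ar[rr]^{\times \phi_{\ell+1}} && \mathcal{E}_{\Sig{k-1}{\ell}}(\partial_{\ell+1}W, \partial_{\ell+1}M_{a}, \partial_{\ell+1}M_{b}).
}
$$
Just as for~(\ref{eq: embeddings pull-back}), this is a pull-back square: fibrewise over the $\partial_{\ell+1}$-restriction, the data of a $\Sig{k}{\ell+1}$-embedding of $W$ is exactly the data of the product factorization by $\phi_{\ell+1}$ along $\partial_{\ell+1}W$, using condition~iii of Definition~\ref{defn: space of embeddings morphisms} together with the compatibility of the collars.

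The second step is to prove the cobordism-triple analogue of Lemma~\ref{lemma: serre fibration embeddings 1}: for a $\Sig{k}{\ell}$-cobordism triple $(W,M_{a},M_{b})$, the restriction map $\partial_{\ell+1}\colon \mathcal{E}_{\Sig{k}{\ell}}(W, M_{a}, M_{b}) \to \mathcal{E}_{\Sig{k-1}{\ell}}(\partial_{\ell+1}W, \partial_{\ell+1}M_{a}, \partial_{\ell+1}M_{b})$ is a Serre fibration. This is proven by the same parametrised isotopy-extension argument used in the appendix for Lemma~\ref{lemma: serre fibration embeddings 1}; the only additional bookkeeping is that an extending isotopy must be chosen to be constant near $\{0,1\}\times\R^{k}_{+}\times\R^{\widehat{n}}\times\R^{\bar{p}+\bar{m}}$ so that the prescribed ends $M_{a}$, $M_{b}$, together with their own $\Sig{k}{\ell}$-structures, are preserved. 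Granting this, the displayed square is homotopy-cartesian, being a pull-back square whose right-hand vertical map is a Serre fibration.

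The third step is the double induction on $k$. For the base case $k = 1$: the space $\mathcal{E}_{\Sig{1}{0}}(W, M_{a}, M_{b})$, and likewise the corresponding spaces for $\partial_{1}W$ and $\beta_{1}W$ with their ends, are spaces of neat embeddings of a cobordism of manifolds with corners, hence weakly contractible by~\cite[Theorem~2.7]{G 08}; feeding this into the homotopy-cartesian square with $k = 1$, $\ell = 0$ shows $\mathcal{E}_{\Sig{1}{1}}(W, M_{a}, M_{b})$ is weakly contractible. For the inductive step, assume $\mathcal{E}_{\Sig{k-1}{\ell}}(W', M'_{a}, M'_{b})$ is weakly contractible for every $\Sig{k-1}{\ell}$-cobordism triple and every $0 \le \ell \le k-1$. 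One runs an inner induction on $\ell$: the base case $\mathcal{E}_{\Sig{k}{0}}(W, M_{a}, M_{b})$ is again weakly contractible by~\cite[Theorem~2.7]{G 08}, and in the homotopy-cartesian square the bottom two corners are weakly contractible by the outer induction hypothesis while the top-right corner is weakly contractible by the inner induction hypothesis, so the top-left corner $\mathcal{E}_{\Sig{k}{\ell+1}}(W, M_{a}, M_{b})$ is weakly contractible. This closes both inductions.

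I expect the main obstacle to be the second step, namely verifying that the restriction map on cobordism-triple embedding spaces is a Serre fibration; once this and the already-established closed-manifold version (Lemma~\ref{lemma: serre fibration embeddings 1}) are in hand, the pull-back assertion and the induction are formal. A secondary point worth checking carefully is that \cite[Theorem~2.7]{G 08} is available in the relative form used here, i.e.\ for embeddings of a cobordism of manifolds with corners with prescribed collared ends, rather than only for closed manifolds with corners.
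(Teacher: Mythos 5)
Your proposal is correct and follows exactly the route the paper takes: the paper states only that this theorem ``is proven in exactly the same way as Theorem \ref{thm: contractibility},'' and your write-up is precisely that argument transplanted to cobordism triples, with the right points flagged for extra care (the Serre-fibration property of the restriction map and the relative form of Genauer's contractibility result).
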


The terminology given in the next definition will be useful later on when we define the cobordism category of $\Sigma^{\ell}_{k}$-manifolds and related constructions. 
 \begin{defn} \label{defn: Sigma-submanifold}
 Let $X$ be a smooth manifold without boundary. 
Let $M$ be a $\Sigma^{\ell}_{k}$-manifold with $\partial_{0}M = \emptyset$, that is embedded as a submanifold of $X\times\R^{k}_{+}\times\R^{\widehat{n}}\times\R^{\bar{p}+\bar{m}}$,
such that the inclusion map 
$$M \hookrightarrow X\times\R^{k}_{+}\times\R^{\widehat{n}}\times\R^{\bar{p}+\bar{m}}$$ 
is an element of the space $\mathcal{E}_{\Sigma^{\ell}_{k}, n}(M, X).$ 
In this case $M$ is called a \textit{$\Sigma^{\ell}_{k}$-submanifold over $X$}. 
In the case that $X$ is a point, we simply refer to $M$ as a \textit{$\Sigma^{\ell}_{k}$-submanifold}.
Similarly, let $(W, M_{a}, M_{b})$ be a $\Sigma^{\ell}_{k}$ bordism triple with $W$ embedded as a submanifold of $X\times[0,1]\times\R^{k}_{+}\times\R^{\widehat{n}}\times\R^{\bar{p}+\bar{m}}$ such that the inclusion
$W \hookrightarrow X\times[0,1]\times\R_{+}\times\R^{\widehat{n}}\times\R^{\bar{p}+\bar{m}}$
is an element of the space $\mathcal{E}_{\Sigma^{\ell}_{k}}((W, M_{a}, M_{b}), X)$. 
Then, $W$ is called a \textit{$\Sigma^{\ell}_{k}$-subcobordism over $X$}.
\end{defn}

\begin{remark}[Normal Bundles] \label{remark: normal bundles}
Let $M \subset X\times\R^{k}_{+}\times\R^{\widehat{n}}\times\R^{p+m}$ be a $\Sigma^{\ell}_{k}$-submanifold without boundary.  
Let $\pi: M \longrightarrow X$ denote the restriction to $M$ of the projection 
$$X\times\R^{k}_{+}\times\R^{\widehat{n}}\times\R^{\bar{p}+\bar{m}} \longrightarrow X.$$
It follows immediately from condition iii. of Definition \ref{defn: Sigma-submanifold} that $\pi$ is a smooth $\Sigma^{\ell}_{k}$-map (see Definition \ref{defn: sigma-k map}). 
Denote by $N \rightarrow M$ the normal bundle. 
The factorizations 
$$\partial_{I}M = \beta_{I}M\times \phi_{P_{I}}(P^{I})$$ 
with
$\beta_{I}M \subset  X\times\R^{k}_{+, I^{c}}\times\R^{\widehat{n}}$ and $\phi_{I}(P^{I}) \subset \R^{\bar{p}+\bar{m}}$, 
imply that the restriction of $N$ to $\partial_{I}M$ has the factorization 
\begin{equation}
N|_{\partial_{I}M} \; = \; (N_{\beta_{I}}\times N_{P^{I}})\oplus\epsilon^{|I|},
\end{equation}
where $N_{\beta_{I}} \rightarrow \beta_{I}M$ and $N_{P^{I}} \rightarrow \phi_{I}(P^{I})$ are the normal bundles for $\beta_{I}M$ and $\phi_{I}(P^{I})$ respectively, and $\epsilon^{|I|}$ is the trivial bundle with fibre-dimension $|I|$. 
\end{remark}

\subsection{Fibre bundles of $\Sigma^{\ell}_{k}$-manifolds}
Let $M$ be a closed $\Sigma^{\ell}_{k}$-manifold. 
Consider the space $\mathcal{E}_{\Sigma^{\ell}_{k}}(M)$. 
There is a free, continuous group action
$$\xymatrix{
\Diff_{\Sigma^{\ell}_{k}}(M)\times\mathcal{E}_{\Sigma^{\ell}_{k}}(M) \longrightarrow \mathcal{E}_{\Sigma^{\ell}_{k}}(M), \quad (g, \varphi) \mapsto \varphi\circ g.
}$$
We let $\mathcal{M}_{\Sigma^{\ell}_{k}}(M)$ denote the orbit space $\dfrac{\mathcal{E}_{\Sigma^{\ell}_{k}}(M)}{\Diff_{\Sigma^{\ell}_{k}}(M)}$. 
Similarly, if $(W, M_{a}, M_{b})$ is a $\Sigma^{\ell}_{k}$-manifold cobordism triple, there is a free, continuous group action 
$$\xymatrix{
\Diff_{\Sigma^{\ell}_{k}}(W, M_{a}, M_{b})\times\mathcal{E}_{\Sigma^{\ell}_{k}}(W, M_{a}, M_{b}) \longrightarrow \mathcal{E}_{\Sigma^{\ell}_{k}}(W, M_{a}, M_{b}), \quad (g, \varphi) \mapsto \varphi\circ g.
}$$
We let $\mathcal{M}_{\Sigma^{\ell}_{k}}(W, M_{a}, M_{b})$ denote the orbit space $\dfrac{\mathcal{E}_{\Sigma^{\ell}_{k}}(W, M_{a}, M_{b})}{\Diff_{\Sigma^{\ell}_{k}}(W, M_{a}, M_{b})}$. 
The following theorem is proven in exactly the same way as \cite[Lemma A.1]{P 13}. 
\begin{theorem} \label{theorem: local triviality}
The quotient maps
$$\xymatrix{
\mathcal{E}_{\Sigma^{\ell}_{k}}(M) \longrightarrow \mathcal{M}_{\Sigma^{\ell}_{k}}(M), \quad \text{and} \quad 
\mathcal{E}_{\Sigma^{\ell}_{k}}(W, M_{a}, M_{b}) \longrightarrow \mathcal{M}_{\Sigma^{\ell}_{k}}(W, M_{a}, M_{b})
}$$
are locally trivial fibre-bundles. 
\end{theorem}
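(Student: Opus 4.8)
The plan is to show that each of the two quotient maps admits continuous local sections. Since $\Diff_{\Sigma^{\ell}_{k}}(M)$ (respectively $\Diff_{\Sigma^{\ell}_{k}}(W, M_{a}, M_{b})$) is a topological group acting freely and continuously on the total space, with the map in question the orbit projection, the existence of local sections is equivalent to the orbit projection being a principal $\Diff_{\Sigma^{\ell}_{k}}$-bundle, hence a locally trivial fibre bundle. I will treat the object case $q\colon \mathcal{E}_{\Sigma^{\ell}_{k}}(M) \to \mathcal{M}_{\Sigma^{\ell}_{k}}(M)$ in detail; the cobordism case is formally identical, using the ambient space $X\times[0,1]\times\R^{k}_{+}\times\R^{\widehat{n}}\times\R^{\bar{p}+\bar{m}}$ and the boundary conditions of Definition \ref{defn: space of embeddings morphisms} in place of those of Definition \ref{defn: i-P maps}.

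Fix $\varphi_{0} \in \mathcal{E}_{\Sigma^{\ell}_{k}}(M)$ and write $M_{0} = \varphi_{0}(M)$ for the associated $\Sigma^{\ell}_{k}$-submanifold of the ambient space $E := \R^{k}_{+}\times\R^{\infty}\times\R^{\bar{p}+\bar{m}}$. The first and main step is to construct a \emph{$\Sigma^{\ell}_{k}$-tubular neighborhood} of $M_{0}$: an open neighborhood $T \subseteq E$ of $M_{0}$ together with a smooth retraction $r\colon T \to M_{0}$ which (a) is a $\Sigma^{\ell}_{k}$-map, so that $r$ carries $T \cap (\R^{k}_{+, I^{c}}\times\R^{\infty}\times\R^{\bar{p}+\bar{m}})$ onto $\partial_{I}M_{0}$ for each $I \subseteq \langle k \rangle$; (b) is compatible with the iterated collars $h_{I}$ of $M_{0}$ in the sense of condition ii.\ of Definition \ref{defn: i-P maps}; and (c) over the $\R^{\bar{p}+\bar{m}}_{I}$-directions is the product of a retraction for $\beta_{I}M_{0}$ with the identity on a fixed tubular neighborhood of $\phi_{I}(P^{I})$, compatibly with the splitting $N|_{\partial_{I}M_{0}} = (N_{\beta_{I}}\times N_{P^{I}})\oplus\epsilon^{|I|}$ from Remark \ref{remark: normal bundles}. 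Such a retraction is built by induction on $|I|$ (equivalently, over the faces of the corner), extending at each stage a collar- and product-compatible tubular neighborhood from the deeper strata to the adjacent ones; this is the Baas--Sullivan/corner analogue of the classical tubular neighborhood theorem and is carried out exactly as in \cite[Lemma A.1]{P 13}. I expect this inductive construction --- making the collar data, the corner structure, and the $P^{I}$-product factorizations simultaneously compatible --- to be the principal technical obstacle; the colimit over $n$ is what provides the room needed to perform the requisite isotopies.

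Granting the tubular neighborhood $(T, r)$, I would then let $U \subseteq \mathcal{M}_{\Sigma^{\ell}_{k}}(M)$ be the set of orbits of $\Sigma^{\ell}_{k}$-submanifolds $M' \subseteq E$ with $M' \subseteq T$ and with $r|_{M'}\colon M' \to M_{0}$ a $\Sigma^{\ell}_{k}$-diffeomorphism. This set is open and contains the orbit of $\varphi_{0}$. For $M'$ as above, the inverse map $\rho_{M'} := (r|_{M'})^{-1}\colon M_{0} \hookrightarrow E$ lies in $\mathcal{E}_{\Sigma^{\ell}_{k}}(M_{0})$ --- here properties (a)--(c) of $r$ are precisely what guarantees that $\rho_{M'}$ satisfies conditions i., ii., and iii.\ of Definition \ref{defn: i-P maps} --- and $\rho_{M'}$ depends continuously on $M'$. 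Setting $s(\text{orbit of } M') := \rho_{M'}\circ\varphi_{0}$ then defines a continuous section $s\colon U \to \mathcal{E}_{\Sigma^{\ell}_{k}}(M)$ of $q$.

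Finally, the map $U \times \Diff_{\Sigma^{\ell}_{k}}(M) \to q^{-1}(U)$, $(u,g)\mapsto s(u)\circ g$, is a homeomorphism: it is continuous because the action map is continuous, and its inverse $\varphi \mapsto \bigl(q(\varphi),\, s(q(\varphi))^{-1}\circ\varphi\bigr)$ is well defined since the action is free and $s(q(\varphi))$ and $\varphi$ have the same image, whence $s(q(\varphi))^{-1}\circ\varphi \in \Diff_{\Sigma^{\ell}_{k}}(M)$; it is continuous because composition, inversion in $\Diff_{\Sigma^{\ell}_{k}}(M)$, and passage to the orbit are continuous. This local trivialization exhibits $q$ as a locally trivial fibre bundle with fibre and structure group $\Diff_{\Sigma^{\ell}_{k}}(M)$. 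For the cobordism case one runs the same argument on $W_{0} = \varphi_{0}(W)$, additionally demanding that the tubular neighborhood restrict to the standard collars of $\{0\}$ and $\{1\}$ in the $[0,1]$-factor so that the conditions of Definition \ref{defn: space of embeddings morphisms} on $f(M_{a})$ and $f(M_{b})$ are preserved, and using the group $\Diff_{\Sigma^{\ell}_{k}}(W, M_{a}, M_{b})$ throughout.
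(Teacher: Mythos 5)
Your proposal is correct and follows essentially the same route as the paper, which proves this theorem by deferring to the argument of \cite[Lemma A.1]{P 13}: one constructs a tubular neighborhood retraction compatible with the collars and the $P^{I}$-product factorizations, uses it to produce a continuous local section of the orbit map, and then invokes the standard principal-bundle argument for a free action with local sections. The one step you rightly flag as the technical heart --- the stratumwise induction making the tubular neighborhood $\Sigma^{\ell}_{k}$-compatible --- is exactly the content of the cited lemma, so no gap remains at the level of detail the paper itself supplies.
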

\begin{remark}
By combining Theorem \ref{theorem: local triviality} with Lemma \ref{thm: contractibility} it follows that there are weak homotopy equivalences 
$$\xymatrix{
\mathcal{M}_{\Sigma^{\ell}_{k}}(M) \simeq B\Diff_{\Sigma^{\ell}_{k}}(M) \quad \text{and} \quad \mathcal{M}_{\Sigma^{\ell}_{k}}(W, M_{a}, M_{b}) \simeq B\Diff_{\Sigma^{\ell}_{k}}(W, M_{a}, M_{b})
}$$
where $B\Diff_{\Sigma^{\ell}_{k}}(M)$ and $B\Diff(W, M_{a}, M_{b})$ denote the \textit{classifying spaces} associated to the topological groups $\Diff_{\Sigma^{\ell}_{k}}(M)$ and $\Diff(W, M_{a}, M_{b})$. 
Furthermore, local triviality of the quotient maps from Theorem \ref{theorem: local triviality} implies that the spaces $\mathcal{M}_{\Sigma^{\ell}_{k}}(M)$ and $\mathcal{M}_{\Sigma^{\ell}_{k}}(W, M_{a}, M_{b})$ have the structure of \textit{infinite dimensional smooth manifolds} (see \cite{KM 97}). 
In this way it makes sense to speak of \textit{smooth maps} from a manifold into $\mathcal{M}_{\Sigma^{\ell}_{k}}(M)$ or $\mathcal{M}_{\Sigma^{\ell}_{k}}(W, M_{a}, M_{b})$.
\end{remark}

The following result is analogous to \cite[Lemma 3.5]{P 13} and is proven in the same way, using Theorem \ref{theorem: local triviality}.
\begin{lemma} \label{lemma: fibre-bundles}
Let $X$ be a smooth manifold without boundary and let $M$ be a closed $\Sigma^{\ell}_{k}$-manifold. 
There is a one-to-one correspondence between smooth maps, 
$$X \longrightarrow \mathcal{M}_{\Sigma^{\ell}_{k}}(M)$$ 
and closed $\Sigma^{\ell}_{k}$-submanifolds 
$$E \subset X\times\R^{k}_{+}\times\R^{\infty}\times\R^{\bar{p}+\bar{m}}$$
for some $n \in \N$, such that the projection $\pi: E \longrightarrow X$ is a smooth fibre-bundle bundle with fibre $M$ and structure group $\Diff_{\Sigma^{\ell}_{k}}(M)$. 

Similarly, if $(W, M_{a}, M_{b})$ is a $\Sigma^{\ell}_{k}$-cobordism triple, then there is a one-to-one correspondence between smooth maps, 
$$X \longrightarrow \mathcal{M}_{\Sigma^{\ell}_{k}}(W; M_{a}, M_{b})$$ 
and $\Sigma_{k}^{\ell}$-subcobordisms
$$E \subset X\times[0,1]\times\R^{k}_{+}\times\R^{\infty}\times\R^{\bar{p}+\bar{m}}$$
such that the projection $\pi: E \longrightarrow X$ is a smooth fibre-bundle bundle with fibre $W$ and structure group $\Diff_{\Sigma^{\ell}_{k}}(W, M_{a}, M_{b})$. 
\end{lemma}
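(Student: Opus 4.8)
The plan is to build the bijection directly out of Theorem \ref{theorem: local triviality}. Because the action of $\Diff_{\Sigma^{\ell}_{k}}(M)$ on $\mathcal{E}_{\Sigma^{\ell}_{k}}(M)$ is free and the quotient map is a locally trivial fibre bundle, the map $\mathcal{E}_{\Sigma^{\ell}_{k}}(M) \to \mathcal{M}_{\Sigma^{\ell}_{k}}(M)$ is a principal $\Diff_{\Sigma^{\ell}_{k}}(M)$-bundle. A smooth map $f : X \to \mathcal{M}_{\Sigma^{\ell}_{k}}(M)$ will be sent to the associated bundle $E_{f} := f^{*}\mathcal{E}_{\Sigma^{\ell}_{k}}(M) \times_{\Diff_{\Sigma^{\ell}_{k}}(M)} M$, and a $\Sigma^{\ell}_{k}$-submanifold $E \subset X \times \R^{k}_{+} \times \R^{\infty} \times \R^{\bar{p}+\bar{m}}$ whose projection to $X$ is a $\Diff_{\Sigma^{\ell}_{k}}(M)$-fibre bundle with fibre $M$ will be sent to the map $f_{E}$ recording, fibrewise, the embedding of $E_{x}$ into $\R^{k}_{+} \times \R^{\infty} \times \R^{\bar{p}+\bar{m}}$ up to $\Sigma^{\ell}_{k}$-diffeomorphism. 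The remaining work is to make both assignments precise, to check that they land where claimed, and to check that they are mutually inverse; this follows the proof of \cite[Lemma 3.5]{P 13} closely.

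For the forward direction, a point of the total space of $f^{*}\mathcal{E}_{\Sigma^{\ell}_{k}}(M)$ lying over $x \in X$ is a pair $(x, \varphi)$ with $\varphi \in \mathcal{E}_{\Sigma^{\ell}_{k}}(M)$ an embedding $M \hookrightarrow \R^{k}_{+} \times \R^{\infty} \times \R^{\bar{p}+\bar{m}}$ representing $f(x)$; the assignment $((x,\varphi), m) \mapsto (x, \varphi(m))$ is constant on $\Diff_{\Sigma^{\ell}_{k}}(M)$-orbits, hence descends to a fibrewise embedding $E_{f} \hookrightarrow X \times \R^{k}_{+} \times \R^{\infty} \times \R^{\bar{p}+\bar{m}}$. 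Since each $\varphi$ satisfies conditions i.--iii.\ of Definition \ref{defn: i-P maps}, this inclusion lies in the relevant embedding space, so $E_{f}$ is a $\Sigma^{\ell}_{k}$-submanifold over $X$, and the structure group being $\Diff_{\Sigma^{\ell}_{k}}(M)$ guarantees that the fibrewise faces $\partial_{I}$ and the fibrewise decompositions $\partial_{I} = \beta_{I} \times P^{I}$ assemble into honest subbundles, so that $\pi : E_{f} \to X$ is a $\Diff_{\Sigma^{\ell}_{k}}(M)$-fibre bundle with fibre $M$. Conversely, given such an $E$, for each $x$ the fibre $E_{x} \subset \{x\} \times \R^{k}_{+} \times \R^{\infty} \times \R^{\bar{p}+\bar{m}}$ together with any $\Sigma^{\ell}_{k}$-diffeomorphism $M \cong E_{x}$ (which exists precisely because the structure group is $\Diff_{\Sigma^{\ell}_{k}}(M)$) determines an element of $\mathcal{E}_{\Sigma^{\ell}_{k}}(M)$, well-defined modulo $\Diff_{\Sigma^{\ell}_{k}}(M)$; this defines $f_{E} : X \to \mathcal{M}_{\Sigma^{\ell}_{k}}(M)$, which is smooth because a local section of $\pi$ composed with a local trivialisation of $\mathcal{E}_{\Sigma^{\ell}_{k}}(M) \to \mathcal{M}_{\Sigma^{\ell}_{k}}(M)$ produces a smooth local lift. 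Unwinding the definitions shows that $f \mapsto E_{f}$ and $E \mapsto f_{E}$ are inverse to one another. The statement for a $\Sigma^{\ell}_{k}$-cobordism triple $(W, M_{a}, M_{b})$ is proven in the same way, replacing $M$ by $W$, the group $\Diff_{\Sigma^{\ell}_{k}}(M)$ by $\Diff_{\Sigma^{\ell}_{k}}(W, M_{a}, M_{b})$, inserting the extra $[0,1]$-coordinate, and using the second quotient map of Theorem \ref{theorem: local triviality}.

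The main obstacle will be verifying that the full $\Sigma^{\ell}_{k}$-structure of the fibre --- the face decomposition $\partial M = \partial_{0}M \cup \cdots \cup \partial_{k}M$, the collars $h_{I}$, and the structure maps $\psi_{I} : \partial_{I}M \cong \beta_{I}M \times P^{I}$ --- is genuinely transported by the associated-bundle construction, so that $E_{f}$ (and $E$) really is a $\Sigma^{\ell}_{k}$-submanifold over $X$ in the precise sense of Definition \ref{defn: Sigma-submanifold}, and not merely a fibrewise family of embedded $\Sigma^{\ell}_{k}$-manifolds. This is exactly where it matters that the transition functions take values in $\Diff_{\Sigma^{\ell}_{k}}(M)$: by definition those diffeomorphisms preserve each $\partial_{I}$, commute with the collars, and split as $g_{\beta_{I}} \times \mathrm{Id}_{P^{I}}$ along $\partial_{I}$, which is precisely the compatibility needed for conditions i.--iii.\ of Definitions \ref{defn: i-P maps} and \ref{defn: Sigma-submanifold} to descend from the fibres to the total space. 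Everything else is formal.
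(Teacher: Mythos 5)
Your proposal is correct and follows exactly the route the paper intends: it invokes Theorem \ref{theorem: local triviality} to view $\mathcal{E}_{\Sigma^{\ell}_{k}}(M) \to \mathcal{M}_{\Sigma^{\ell}_{k}}(M)$ as a principal $\Diff_{\Sigma^{\ell}_{k}}(M)$-bundle, builds the correspondence by the associated-bundle/tautological-evaluation construction in one direction and by fibrewise classification in the other, and correctly isolates the point that the structure group lying in $\Diff_{\Sigma^{\ell}_{k}}(M)$ is what transports the face, collar, and $\beta_{I}\times P^{I}$ data to the total space. This is the same argument as \cite[Lemma 3.5]{P 13}, which is all the paper itself offers.
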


\section{The cobordism category} \label{section: the cobordism category}
\label{subsection: The Cobordism Category}
We are now ready to define the category
$\mathbf{Cob}^{\Sig{k}{\ell}}_{d}$ that was roughly described in the introduction. 
An object of
$\mathbf{Cob}^{\Sig{k}{\ell}}_{d}$ is a pair $(M, a)$ with $a \in
\R$ and $M \subseteq
\R^{k}_{+}\times\R^{\infty}\times\R^{\bar{p}+\bar{m}}$ a
closed, $(d-1)$-dimensional $\Sig{k}{\ell}$-submanifold, as defined in Definition \ref{defn: Sigma-submanifold}.
A non-identity morphism from $(M_{a}, a)$ to $(M_{b}, b)$, is a triple $(W, a, b)$ with $a < b$, and where
$W \subset [a, b]\times\R^{k}_{+}\times\R^{\infty}\times\R^{\bar{p}+\bar{m}}$ is 
a $d$-dimensional $\Sig{k}{\ell}$ subcobordism such that 
$$W\cap(\{\nu\}\times\R^{k}_{+}\times\R^{\infty}\times\R^{\bar{p}+\bar{m}}) = M_{\nu} \quad \text{for $\nu = a, b$}.$$
Two morphisms $(W, a, b)$ and $(V, c, d)$ can be composed if $b = c$ and if
$$W\cap(\{b\}\times\R^{k}_{+}\times\R^{\infty}\times\R^{\bar{p}+\bar{m}}) \; = \; V\cap(\{c\}\times\R^{k}_{+}\times\R^{\infty}\times\R^{\bar{p}+\bar{m}}).$$
In this case, the composition of $(W, a, b,)$ of $(V, c, d)$ is given by $(W\cup V, a, d)$. 
Condition ii. of Definition \ref{defn: space of embeddings morphisms} (the condition requiring embeddings of $\Sigma^{\ell}_{k}$-manifolds to respect collars) ensures that the union $W\cup V$ is a smooth manifold with corners and so the composition is well defined. 
It is easy to check that this composition rule is associative. 

We want to make $\mathbf{Cob}^{\Sig{k}{\ell}}_{d}$ into a \textit{topological category}.  
Observe that as sets we have isomorphisms,
\begin{equation} \label{eq: topological structure}
\begin{aligned}
\Ob(\mathbf{Cob}^{\Sig{k}{\ell}}_{d}) \; &\cong \;   {\displaystyle \coprod_{M}}\ \ \ \  \mathcal{M}_{\Sigma^{\ell}_{k}}(M) \times \R, \\
\Mor(\mathbf{Cob}^{\Sig{k}{\ell}}_{d}) \; &\cong \;  \Ob(\mathbf{Cob}^{\Sig{k}{\ell}}_{d})\amalg {\displaystyle \coprod_{(W, M_{a}, M_{b})}}\mathcal{M}_{\Sig{k}{\ell}}(W, M_{a}, M_{b})\times\R^{2}_{<},
\end{aligned}
\end{equation}
where $M$ varies over diffeomorphism classes of closed
$(d-1)$-dimensional $\Sig{k}{\ell}$ manifolds, $(W, M_{a}, M_{b})$ varies over
diffeomorphism classes of $d$-dimensional $\Sig{k}{\ell}$-cobordism triples, and $\R^{2}_{<}$ denotes the space $\{(x, y) \in \R^{2} \; | \; x < y \}$. 
 We then topologize the category $\mathbf{Cob}^{\Sig{k}{\ell}}_{d}$ using the bijections of (\ref{eq: topological structure}). 
With $\mathbf{Cob}^{\Sig{k}{\ell}}_{d}$ topologized in this way, we see that composition and the target and source maps are all continuous.

Let $\ell \leq k$ be non-negative integers. 
Let $P$ be a closed manifold of dimension $p$ and let $\phi: P \longrightarrow \R^{p+m}$ be an embedding where $m > p$ is an integer. 
There is a continuous functor 
\begin{equation} \label{equation: times P map}
\tau_{P}: \mathbf{Cob}^{\Sigma^{\ell}_{k}}_{d} \longrightarrow \mathbf{Cob}^{\Sigma^{\ell}_{k}}_{d + p}
\end{equation}
defined by sending an object $M \subset \{a\}\times\R^{k}_{+}\times\R^{\infty}\times\R^{\bar{p}+\bar{m}}$ of $\mathbf{Cob}^{\Sigma^{\ell}_{k}}_{d}$ to the object of $\mathbf{Cob}^{\Sigma^{\ell}_{k}}_{d+p}$ given by the product $M\times\phi(P) \subset (\{a\}\times\R^{k}_{+}\times\R^{\infty}\times\R^{\bar{p}+\bar{m}})\times\R^{p+m}$. 
The definition of the $\tau_{P}$ on morphisms is similar.
An argument similar the proof of Proposition \ref{prop: independence of embedding} shows that since $m > p$, then the natural isomorphism class of the functor $\tau_{P}$ is independent of the chosen embedding $\phi$ (since $m > p$ implies that any two such embeddings are isotopic). 
We will latter need to use the following result. 
\begin{proposition} \label{proposition: homotopy invariance of times P}
Let $P$ and $P'$ be closed manifolds of dimension $p$ and let $\phi: P \rightarrow \R^{p+m}$ and $\phi': P' \rightarrow \R^{p +m}$ be embeddings with $m > p$. 
Suppose that $P$ and $P'$ are cobordant. 
Then the maps 
$
B\tau_{P}, \; B\tau_{P'}: B\mathbf{Cob}^{\Sigma^{\ell}_{k}}_{d} \longrightarrow B\mathbf{Cob}^{\Sigma^{\ell}_{k}}_{d+p},
$
induced by the functors $\tau_{P}$ and $\tau_{P'}$, are homotopic. 
\end{proposition}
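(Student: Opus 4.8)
The plan is to reduce the statement to the classical fact that cobordant manifolds give homotopic maps on classifying spaces via an explicit cobordism construction, exactly mirroring the strategy used in \cite{P 13}. Since $P$ and $P'$ are cobordant, fix a compact manifold $Q$ of dimension $p+1$ with $\partial Q = P \sqcup P'$, and fix an embedding $\psi: Q \longrightarrow [0,1]\times\R^{p+m}$ which is neat, restricts to $\phi$ on $P$ over $\{0\}$ and to $\phi'$ on $P'$ over $\{1\}$, and is collared near the boundary; such a $\psi$ exists because $m > p$. The idea is to use $Q$ to build a natural transformation, up to homotopy, between $\tau_{P}$ and $\tau_{P'}$, which on classifying spaces yields the desired homotopy $B\tau_P \simeq B\tau_{P'}$.

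First I would introduce an auxiliary cobordism category $\mathbf{Cob}^{\Sigma^{\ell}_{k}}_{d+p}[0,1]$ (or work directly with the sheaf/smooth-map model from Lemma \ref{lemma: fibre-bundles}) whose objects and morphisms over a smooth test manifold $X$ are families of embedded $\Sigma^{\ell}_{k}$-submanifolds in $X\times\R^k_+\times\R^\infty\times\R^{\bar p+\bar m}\times[0,1]\times\R^{p+m}$, and define a functor $\tau_Q$ sending $M$ to $M\times\psi(Q)$. The two ``ends'' of this construction, obtained by intersecting with $[0,1]$-coordinate $0$ and $1$ respectively, recover $\tau_P$ and $\tau_{P'}$ up to the evident identification $[0,1]\times\R^{p+m}\cong \R\times\R^{p+m}$ reabsorbed into the $\R^\infty$ factor. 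Concretely, on the level of the moduli spaces $\mathcal{M}_{\Sigma^\ell_k}(-)$ from the previous section, $\tau_Q$ produces, for each family $E \to X$ of closed $\Sigma^\ell_k$-manifolds, a family $E\times\psi(Q) \to X$; restricting the $Q$-factor to its two boundary components gives $E\times\phi(P)$ and $E\times\phi(P')$. This furnishes a concordance between the two maps $X \to \mathcal{M}_{\Sigma^\ell_k}(M\times P)$ and $X \to \mathcal{M}_{\Sigma^\ell_k}(M\times P')$ sitting inside a single larger moduli space. Passing through the equivalences $\mathcal{M}_{\Sigma^\ell_k}(M) \simeq B\Diff_{\Sigma^\ell_k}(M)$ and taking nerves, the concordance becomes a homotopy between $B\tau_P$ and $B\tau_{P'}$ after composing with a homotopy equivalence induced by ``forgetting'' the collar region of $Q$ near its boundary (which is a product $P\times[0,\epsilon)$, hence homotopically invisible at the level of classifying spaces).

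The key steps, in order, are: (1) choose $Q$ and the neat collared embedding $\psi$, using $m>p$ for the latter; (2) set up the product-with-$Q$ construction $\tau_Q$ on the sheaf model of $\mathbf{Cob}^{\Sigma^\ell_k}_{d+p}$ following the framework of Section \ref{section: the cobordism category}, and check that it sends $\Sigma^\ell_k$-submanifolds to $\Sigma^\ell_k$-submanifolds (here one must verify that multiplying by $Q$ respects the boundary decomposition, the collars, and the product structure maps of condition iii.\ of Definition \ref{defn: closed Manifold def}, which is immediate since $Q$ is an ordinary closed-boundary manifold and the product is taken in the last $\R^{p+m}$ coordinates disjoint from the singularity directions); (3) identify the two boundary restrictions of $\tau_Q$ with $\tau_P$ and $\tau_{P'}$ up to the collar-forgetting equivalence; and (4) conclude by applying the classifying space functor $B$ and noting that a map out of $[0,1]$ into the space of morphisms/objects yields the sought homotopy, invoking Lemma \ref{lemma: fibre-bundles} to translate smooth maps into $X\times[0,1]$ into the required homotopies.

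The main obstacle I anticipate is step (3): making the identification of the ``ends'' of $\tau_Q$ with $\tau_P$, $\tau_{P'}$ genuinely functorial and continuous, rather than just pointwise on diffeomorphism classes. One must be careful that the collar of $Q$ near $\partial Q$ is chosen compatibly with the collars $h_I$ of the ambient $\Sigma^\ell_k$-manifolds so that $M\times\psi(Q)$ actually lands in the embedding space $\mathcal{E}_{\Sigma^\ell_k}(-)$ of Definition \ref{defn: i-P maps} and varies continuously in families; this is essentially the same bookkeeping carried out in \cite[proof of the analogous statement]{P 13}, and the argument there transfers verbatim once the product structures are checked to be compatible. The rest—contractibility of the relevant embedding spaces from Theorem \ref{thm: contractibility} and Theorem \ref{theorem: weakly contractable, bordism space}, and local triviality from Theorem \ref{theorem: local triviality}—is available off the shelf and guarantees that the construction descends correctly to classifying spaces.
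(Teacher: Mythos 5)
Your geometric input is the right one --- the cobordism $Q$ with $\partial Q = P \sqcup P'$, neatly embedded in $[0,1]\times\R^{p+m}$, and the product $M\times\psi(Q)$ --- and this is exactly the ingredient the paper uses. The gap is in the mechanism you propose for converting $M\times\psi(Q)$ into a homotopy $B\tau_P \simeq B\tau_{P'}$. You treat the $[0,1]$ factor as a \emph{parameter} direction and speak of a concordance, equivalently ``a map out of $[0,1]$ into the space of objects.'' This step fails: the slices $\psi(Q)\cap(\{t\}\times\R^{p+m})$ are not a continuous path in $\Ob(\mathbf{Cob}^{\Sigma^{\ell}_{k}}_{d+p})$ (they are not even manifolds at critical levels of the height function, and their diffeomorphism type jumps), and in the sheaf model an element of $\mathbf{D}^{\Sigma^{\ell}_{k}}_{d+p}(X\times[0,1])$ is required to submerse onto $X\times[0,1]$, whereas $W\times Q \to X\times[0,1]$ submerses only if the height function on $Q$ has no critical points, i.e.\ only if $Q$ is a product. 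So the ``family over $[0,1]$'' you describe is not a legitimate object of any of the moduli spaces $\mathcal{M}_{\Sigma^{\ell}_{k}}(-)$ or sheaves $\mathbf{D}^{\Sigma^{\ell}_{k}}_{d+p}$, and no amount of collar bookkeeping in your step (3) repairs this.

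The fix, which is the paper's proof, is to place $Q$ in the cobordism (``time'') direction of the target category rather than in a new parameter direction: choosing collared embeddings $\Phi_a: Q \to [a,a+1]\times\R^{p+m}$ restricting to $\phi$ and $\phi'$ at the ends, the product $M\times\Phi_a(Q)$ is a genuine \emph{morphism} of $\mathbf{Cob}^{\Sigma^{\ell}_{k}}_{d+p}$ from $\tau_P(M,a)$ to $T_1\circ\tau_{P'}(M,a)$, where $T_1$ is the translation functor $(M,a)\mapsto(M,a+1)$. In the morphism direction no submersion or local triviality over a parameter is required --- that is the whole point of a cobordism category. The assignment $(M,a)\mapsto M\times\Phi_a(Q)$ is a natural transformation $\tau_P \Rightarrow T_1\circ\tau_{P'}$, and a natural transformation of functors induces a homotopy of the maps of classifying spaces; since $T_1$ is (homotopic to) the identity, $B\tau_P \simeq B\tau_{P'}$. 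None of the heavier machinery you invoke (Theorem \ref{thm: contractibility}, Theorem \ref{theorem: local triviality}, Lemma \ref{lemma: fibre-bundles}) is needed.
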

\begin{proof}
Let $W$ be a cobordism from $P$ to $P'$, i.e.\ $W$ is a compact manifold with $\partial W = P\sqcup P'$. 
For each $a \in \R$, 
let $\Phi_{a}: W \longrightarrow [a, a+1]\times\R^{p+m}$ be a collared embedding such that the following diagram commutes,
$$\xymatrix{
P \ar[d]^{\phi} \ar@{^{(}->}[rr] && W \ar[d]^{\Phi} && P' \ar@{_{(}->}[ll] \ar[d]_{\phi'} \\
\{a\}\times\R^{p+m} \ar[rr]   && [a, a+1]\times\R^{p+m} && \{a+1\}\times\R^{p+m}. \ar[ll]
}$$
For each $t \in \R$, let 
$$T_{t}: \mathbf{Cob}^{\Sigma^{\ell}_{k}}_{d} \longrightarrow B\mathbf{Cob}^{\Sigma^{\ell}_{k}}_{d}$$
be the functor defined by $(M, a) \mapsto (M, t + a)$ and $(W, a, b) \mapsto (W, t + a, s + b)$. 
Clearly this transformation is an isomorphism with inverse given by the functor $T_{-t}$.
For any object $(M, a) \in \mathbf{Cob}^{\Sigma^{\ell}_{k}}_{d}$, the product 
\begin{equation} \label{equation: natural transformation}
M\times\Phi_{a}(W) \; \subset \; (\R^{k}_{+}\times\R^{\infty}\times\R^{\bar{p}+\bar{m}})\times([a, a+1]\times\R^{p+m})
\end{equation}
determines a morphism from $\tau_{P}(M, a)$ to $\tau_{P'}(M, a)$ in $\mathbf{Cob}^{\Sigma^{\ell}_{k}}_{d+p}$ (after identifying the ambient space $(\R^{k}_{+}\times\R^{\infty}\times\R^{\bar{p}+\bar{m}})\times[a, a+1]\times\R^{p+m}$ with $[a, a+1]\times\R^{k}_{+}\times(\R^{\infty}\times\R^{p+m})\times\R^{\bar{p}+\bar{m}}$.)
The correspondence 
$$(M, a) \mapsto \bigg(M\times\Phi_{a}(W): \tau_{P}(M, a) \rightarrow \tau_{P'}(M, a)\bigg)$$
defines a natural transformation 
$$\tau_{P}\longrightarrow T_{1}\circ\tau_{P'}$$
 and thus induces a homotopy between the maps $B\tau_{P}$ and $B\tau_{P'}$. 
 This concludes the proof of the proposition. 
\end{proof}

We will need to consider the functors
$$\xymatrix@C-.10pc@R-1.5pc{
\partial_{\ell+1}: \mathbf{Cob}^{\Sigma^{\ell}_{k}}_{d} \longrightarrow \mathbf{Cob}^{\Sigma^{\ell}_{k-1}}_{d-1}, &  (M, a) \mapsto (\partial_{\ell+1}M, a), \\
\beta_{\ell+1}: \mathbf{Cob}^{\Sigma^{\ell+1}_{k}}_{d} \longrightarrow \mathbf{Cob}^{\Sigma^{\ell}_{k-1}}_{d-1-p_{\ell+1}}, &  (M, a) \mapsto (\beta_{\ell+1}M, a),
}
$$
and the functor
$$\tau_{P_{\ell}}: \mathbf{Cob}^{\Sigma^{\ell}_{k}}_{d-p_{\ell+1} -1} \longrightarrow \mathbf{Cob}^{\Sigma^{\ell}_{k-1}}_{d-1}$$
from (\ref{equation: times P map}), defined using the embedding $\phi_{\ell}$ which was specified at the beginning of Section \ref{notation: embeddings}. 
It follows directly from the construction of the category $\mathbf{Cob}^{\Sigma^{\ell}_{k}}_{d}$, that the commutative square 
$$\xymatrix{
\mathbf{Cob}_{d}^{\Sigma^{\ell+1}_{k}} \ar[d]^{\beta_{\ell+1}} \ar[rrr] &&& \mathbf{Cob}_{d}^{\Sigma^{\ell}_{k}}
  \ar[d]^{\partial_{\ell + 1}}\\ 
  \mathbf{Cob}_{d-p_{\ell+1}-1}^{\Sigma^{\ell}_{k-1}}
  \ar[rrr]^{\tau_{P_{\ell+1}}} &&&
  \mathbf{Cob}_{d-1}^{\Sigma^{\ell}_{k-1}}
  }
$$ 
is a pull-back square (a pull-back in the category of topological categories), where the top-horizontal map is the natural inclusion. 
In Section \ref{section: A fibre sequence of classifying spaces}, we will prove that the induced commutative square obtained when passing to classifying spaces is homotopy-cartesian, which is the statement of Theorem \ref{thm: homotopy cartesian} from the introduction.

\section{A sheaf model for the cobordism category} \label{section: sheaf model}

\subsection{A recollection from \cite{GMTW 09} of sheaves} \label{section: A Recollection of Sheaves}
Let $\mathcal{X}$ denote the category with objects given by smooth manifolds without
boundary and with morphisms given by smooth maps. 
A sheaf
on $\mathcal{X}$ is defined to be a functor $\mathcal{F}: \mathcal{X}^{\op} \longrightarrow \textbf{Sets}$ which satisfies the following
condition. 
For any open covering $\{U_{i} \; | \; i \in \Lambda
\}$ of $X \in \Ob(\mathcal{X})$, and every collection $s_{i} \in
\mathcal{F}(U_{i})$ satisfying $s_{i}\mid_{U_{i}\cap U_{j}} =
s_{j}\mid_{U_{i}\cap U_{j}}$ for all $i, j \in \Lambda$, there is a
unique $s \in \mathcal{F}(X)$ such that $s\mid_{U_{i}} = s_{i}$ for
all $i \in \Lambda$. 

We are interested in the concordance theory of sheaves. 
Let $\mathcal{F}$ be a sheaf on $\mathcal{X}$. Two elements $s_{0}$
and $s_{1}$ of $\mathcal{F}(X)$ are said to be concordant if there
exists an element $s \in \mathcal{F}(X\times\R)$ that agrees with $\text{proj}_{X}^{*}(s_{0})$
in an open neighborhood of $X\times(-\infty, 0]$ and agrees with
$\text{proj}_{X}^{*}(s_{1})$ in an open neighborhood of $X\times[1,\infty)$.
We denote the set of concordance
classes of $\mathcal{F}(X)$ by $\mathcal{F}[X]$.  The correspondence
$X \mapsto \mathcal{F}[X]$ is clearly functorial in $X$. 

\begin{defn}For a sheaf $\mathcal{F}$ we define  the \textit{representing space}, denoted by $|\mathcal{F}|$, to be the geometric realization of the simplicial set given by the formula
$k \; \mapsto \; \mathcal{F}(\triangle^{k}_{e}),$
where 
$$\triangle^{k}_{e} := \{(x_{0}, x_{1}, \dots, x_{n}) \in \R^{n+1} \; | \; \sum x_{i} = 1\}$$ 
is the standard extended $k$-simplex. 
\end{defn}
From this definition it follows immediately that any natural transformation of sheaves
$\mathcal{F} \rightarrow \mathcal{G}$ induces a map between the
representing spaces $|\mathcal{F}| \rightarrow |\mathcal{G}|$.

In addition to set-valued sheafs, we will also have to consider
sheaves on $\mathcal{X}$ which take values in the category of small categories, which we denote by $\mathbf{Cat}$. 
A $\mathbf{Cat}$-valued
sheaf on $\mathcal{X}$ is a contravariant functor from $\mathcal{X}$
to $\mathbf{Cat}$ satisfying the same \textit{sheaf condition} given above. 
For a $\mathbf{Cat}$-valued sheaf $\mathcal{F}$ and for each positive integer $k$, one has 
the set valued sheaf
$\mathcal{N}_{k}\mathcal{F}$ defined by sending $X \in
\Ob(\mathcal{X})$ to the $k$-th nerve set of the category
$\mathcal{F}(X)$.
The correspondence,
$\ell \; \mapsto \; \mathcal{F}(\triangle^{\ell}_{e})$ defines a
simplicial category.  
Furthermore, the representing space 
$|\mathcal{F}|$ obtains the structure of a topological category with
$$\xymatrix{
\Ob(|\mathcal{F}|) \; = \; |\mathcal{N}_{0}\mathcal{F}|, & \Mor(|\mathcal{F}|) \; = \; |\mathcal{N}_{1}\mathcal{F}|.
}$$
One then has
\begin{equation} B|\mathcal{F}| \; \cong \; |k \mapsto \mathcal{N}_{k}\mathcal{F}(\triangle_{e}^{k})|, \end{equation}
where the space on the left is the classifying space of the topological category $|\mathcal{F}|$ and the right-hand side is the geometric realization of the simplicial space defined by the correspondence $k \mapsto \mathcal{N}_{k}\mathcal{F}(\triangle_{e}^{k})$. 

\subsection{The cobordism category sheaf} \label{subsection: the cobordism category sheaf}
We now define a $\mathbf{Cat}$-valued sheaf whose representing space is the topological category $\mathbf{Cob}^{\Sigma^{\ell}_{k}}_{d}$. 
The definitions of this section are analogous (and actually generalize) those in \cite[Section 2.3]{GMTW 09} and the results stated are analogous to those in \cite[Section 4]{GMTW 09} and are proven in exactly the same way. 
We omit the proofs of most of the propositions but we define everything explicitly.

Let $X \in \Ob(\mathcal{X})$. 
We will need to consider $\Sigma^{\ell}_{k}$-submanifolds (see Definition \ref{defn: Sigma-submanifold})
$$W \subset X\times\R\times(\R^{k}_{+}\times\R^{\infty}\times\R^{\bar{p}+\bar{m}})$$
over $X\times\R$, with $\partial_{0}W = \emptyset$. 
For such $\Sigma^{\ell}_{k}$-submanifolds, we will need to fix some notation.
\begin{Notation} The following notational conventions will be useful when dealing with sheaves on $\mathcal{X}$. 
\begin{itemize}
\itemsep.2cm
\item
Let $X \in \Ob(\mathcal{X})$ and let 
$W \subset X\times\R\times\R^{k}_{+}\times\R^{\infty}\times\R^{\bar{p}+\bar{m}}$
be a $\Sigma^{\ell}_{k}$-submanifold over $X\times \R$. 
We will denote by
\begin{equation} \label{equation: projection notation}
(\pi, f): W \longrightarrow X\times\R
\end{equation}
the restriction to $W$ of the projection, $\text{proj}_{X\times\R}: X\times\R\times\R^{k}_{+}\times\R^{\infty}\times\R^{\bar{p}+\bar{m}} \longrightarrow X\times\R$. 
\item
Let $a, b: X \longrightarrow \R$ be smooth functions with $a(x) < b(x)$ for all $x \in X$. 
We denote
$$\begin{aligned}
X\times (a, b) \;  &= \; \{(x, t) \in X\times\R \; | \; a(x) < t < b(x) \; \text{for all $x \in X$} \; \}, \\
X\times [a, b] \;  &= \; \{(x, t) \in X\times\R \; | \; a(x) \leq t \leq b(x) \; \text{for all $x \in X$}\; \}.
\end{aligned}
$$
\end{itemize}
\end{Notation}
The next definition is analogous to and generalizes \cite[Definition 2.6]{GMTW 09}.
\begin{defn} \label{defn: C transverse}
Let $X \in \Ob(\mathcal{X})$ and let $a, b: X \longrightarrow \R$ be smooth functions with $a(x) \leq b(x)$ for all $x \in X$.
For a positive real number $\varepsilon$, 
we define $\mathbf{C}^{\Sig{k}{\ell}, \pitchfork}_{d}(X; a, b, \varepsilon)$ to be the set of $(d + \dim(X))$-dimensional $\Sig{k}{l}$-submanifolds
$$W \; \subset \; X\times(a - \varepsilon, b + \varepsilon)\times (\R^{k}_{+}\times\R^{\infty}\times\R^{\bar{p}+\bar{m}})$$
over $X\times(a - \varepsilon, b + \varepsilon)$, subject to  the following conditions:
\begin{enumerate} \itemsep.2cm
\item[i.] The projection $\pi: W \longrightarrow X$ is a $\Sigma^{\ell}_{k}$-submersion. 
\item[ii.] The projection $(\pi, f): W \longrightarrow X\times(a - \varepsilon, b + \varepsilon)$ is a proper $\Sigma^{\ell}_{k}$-map.
\item[iii.] For $\nu = a, b$, the restriction of $\pi$ to the pre-image $(\pi, f)^{-1}(X\times(\nu - \varepsilon, \nu + \varepsilon))$ is a $\Sigma^{\ell}_{k}$-submersion (see Definition \ref{defn: sigma-k submersion}). 
\end{enumerate}
\end{defn}
We then eliminate dependence on $\varepsilon$ by setting
$\mathbf{C}^{\Sig{k}{\ell}, \pitchfork}_{d}(X; a, b) \; := \; {\displaystyle \colim_{\varepsilon \to 0}}\mathbf{C}^{\Sig{k}{\ell}, \pitchfork}_{d}(X; a, b, \varepsilon).$

\begin{defn} For $X \in \Ob(\mathcal{X})$, we set
$$\mathbf{C}^{\Sig{k}{\ell}, \pitchfork}_{d}(X) \; := \; {\displaystyle \coprod_{a \leq b}} \; \mathbf{C}^{\Sig{k}{\ell}, \pitchfork}_{d}(X; a, b)$$
with union ranging over all pairs of smooth functions $a, b: X \longrightarrow \R$ with $a \leq b$, such that the set
$\{x \in X \; | \; a(x) = b(x) \}$
is an open subset of $X$.
\end{defn}

The assignment $X \mapsto \mathbf{C}^{\Sig{k}{\ell}, \pitchfork}_{d}(X)$ for $X \in \mathcal{X}$ is a contravriant functor. Indeed, if $g: X \rightarrow Y$ is a smooth map, then for any $\Sig{k}{l}$-submanifold
 $$ W \subset  Y\times(a - \epsilon_{0}, b + \epsilon_{0})\times (\R^{k}_{+}\times\R^{\infty}\times\R^{\bar{p}+\bar{m}})$$
 representing an element of $\mathbf{C}^{\Sig{k}{\ell}, \pitchfork}_{d}(Y)$, it follows from condition iii. of Definition \ref{defn: C transverse} that the space $g^{*}(W)$ defined by forming the \textit{pull-back}, 
 $$\xymatrix{
 g^{*}(W) \ar[d]^{\hat{\pi}} \ar[rr]^{g^{*}} && W \ar[d]^{\pi}\\
 X \ar[rr]^{g} && Y}$$
 is a $\Sig{k}{\ell}$- submanifold over $X$. 
One can check also that $\mathbf{C}^{\Sig{k}{\ell}, \pitchfork}_{d}$ satisfies the sheaf condition on $\mathcal{X}$. 
Furthermore for each $X \in \Ob(\mathcal{X})$, the set $\mathbf{C}^{\Sig{k}{\ell}, \pitchfork}_{d}(X)$ is endowed with the structure of a category in the same way as described in \cite[Page 10]{GMTW 09}. Thus, $\mathbf{C}^{\Sig{k}{\ell}, \pitchfork}_{d}$ defines a $\mathbf{Cat}$-valued sheaf on $\mathcal{X}$. 
 \begin{defn} \label{defn: C sheaf} Let 
$\mathbf{C}^{\Sig{k}{\ell}}_{d}(X; a, b) \subseteq \mathbf{C}^{\Sig{k}{\ell}, \pitchfork}_{d}(X; a, b)$
be the subset satisfying the further condition:
\begin{enumerate}
\item[vi.] For $\nu = a, b$ and $x \in X$, let $J_{\nu}(x)$ be the interval $((\nu - \epsilon_{0})(x), (\nu + \epsilon_{0})(x)) \subseteq \R$ and let
$$V_{\nu} \; = \; (\pi, f)^{-1}(\{x\}\times J_{\nu}(x)) \subseteq \{x\}\times J_{\nu}(x)\times\R^{k}_{+}\times\R^{\infty}\times\R^{\bar{p}+\bar{m}}.$$
Then, there exists some $(d-1)$-dimensional $\Sigma_{k}^{\ell}$-submanifold,
$$M \subset \R^{k}_{+}\times\R^{\infty}\times\R^{\bar{p}+\bar{m}}$$
such that
$V_{\nu} \; = \; \{x\}\times J_{\nu} \times M \; \subset \; \{x\}\times J_{\nu}(x)\times\R^{k}_{+}\times\R^{\infty}\times\R^{\bar{p}+\bar{m}}.$
\end{enumerate}
\end{defn} 

We then proceed to define 
$$\mathbf{C}^{\Sig{k}{l}}_{d}(X) \; := \; \coprod_{a \leq b}\mathbf{C}^{\Sig{k}{l}}_{d}(X; a, b).$$
As with $\mathbf{C}^{\Sig{k}{l}, \pitchfork}_{d}$, the contravariant functor $X \mapsto \mathbf{C}^{\Sig{k}{l}}_{d}(X)$ defines a $\textbf{Cat}$-valued sheaf on $\mathcal{X}$. 
This added condition from Definition \ref{defn: C sheaf} implies that given 
$W \in  \mathbf{C}^{\Sig{k}{l}}_{d}(X; a, b)(X),$
for all $x \in X$ the inclusion map of the  
pre-mage 
$$(\pi, f)^{-1}(\{x\}\times[a(x), b(x)]) \; \subset \; [a(x), b(x)]\times\R^{k}_{+}\times\R^{\infty}\times\R^{\bar{p}+\bar{m}}$$
is $\Sigma_{k}^{\ell}$-subcobordism and is thus a an element of the morphism space $\Mor(\Cob_{d}^{\Sigma^{\ell}_{k}})$. 
Using the topological structure on $\mathbf{Cob}^{\Sigma^{\ell}_{k}}_{d}$ given in (\ref{eq: topological structure}), together with Lemma \ref{lemma: fibre-bundles} regarding fibre-bundles with $\Sigma^{\ell}_{k}$-manifold fibres, 
there is a natural isomorphism 
\begin{equation}
 C^{\infty}(\; \underline{\hspace{.3cm}} \;, \mathbf{Cob}^{\Sigma^{\ell}_{k}}_{d}) \stackrel{\cong} \longrightarrow \mathbf{C}^{\Sigma^{\ell}_{k}}_{d},
\end{equation}
 given by sending a smooth map $f: X \longrightarrow \mathbf{Cob}^{\Sigma^{\ell}_{k}}_{d}$ to the fibre-bundle of $\Sigma^{\ell}_{k}$-manifold cobordisms over $X$ that $f$ classifies. 
The topological structure on $\mathbf{Cob}^{\Sigma^{\ell}_{k}}_{d}$ defined in (\ref{eq: topological structure}) implies the following result (see also \cite[Proposition 2.9]{GMTW 09}).
\begin{proposition} \label{eq: singular set eq} 
There is a weak homotopy equivalence
$B|\mathbf{C}^{\Sig{k}{\ell}}_{d}| \; \simeq \; B\mathbf{Cob}^{\Sig{k}{\ell}}_{d}.$
\end{proposition}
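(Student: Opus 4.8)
The plan is to follow the strategy of \cite[Section 4]{GMTW 09} verbatim, since the topological category $\mathbf{Cob}^{\Sigma^{\ell}_{k}}_{d}$ has been set up precisely so that the $\mathbf{Cat}$-valued sheaf $\mathbf{C}^{\Sigma^{\ell}_{k}}_{d}$ plays the role that the sheaf denoted $\mathcal{C}_{d}$ plays there. First I would invoke the natural isomorphism of sheaves $C^{\infty}(\;\underline{\hspace{.2cm}}\;, \mathbf{Cob}^{\Sigma^{\ell}_{k}}_{d}) \xrightarrow{\ \cong\ } \mathbf{C}^{\Sigma^{\ell}_{k}}_{d}$ recorded just above the statement; this comes from Lemma \ref{lemma: fibre-bundles}, which identifies smooth maps $X \to \mathcal{M}_{\Sigma^{\ell}_{k}}(M)$ (resp.\ into the cobordism moduli spaces) with fibre bundles of $\Sigma^{\ell}_{k}$-manifolds over $X$, together with the explicit topology on $\mathbf{Cob}^{\Sigma^{\ell}_{k}}_{d}$ from (\ref{eq: topological structure}). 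Restricting this isomorphism to the extended simplices $\triangle^{\ell}_{e}$ and taking $k$-th nerves gives, for each $k$, a level-wise isomorphism of simplicial sets $\mathcal{N}_{k}\mathbf{C}^{\Sigma^{\ell}_{k}}_{d}(\triangle^{\bullet}_{e}) \cong \Mor^{k}\big(C^{\infty}(\triangle^{\bullet}_{e}, \mathbf{Cob}^{\Sigma^{\ell}_{k}}_{d})\big)$, the singular simplicial set of the $k$-fold composable morphism space of $\mathbf{Cob}^{\Sigma^{\ell}_{k}}_{d}$.

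The next step is to assemble these degree-wise identifications into the bisimplicial picture. On one side, $B|\mathbf{C}^{\Sigma^{\ell}_{k}}_{d}|$ is by definition the realization of the bisimplicial set $(k,\ell) \mapsto \mathcal{N}_{k}\mathbf{C}^{\Sigma^{\ell}_{k}}_{d}(\triangle^{\ell}_{e})$, as recorded in the displayed formula $B|\mathcal{F}| \cong |k \mapsto \mathcal{N}_{k}\mathcal{F}(\triangle^{k}_{e})|$ in Section \ref{section: A Recollection of Sheaves}. On the other side, $B\mathbf{Cob}^{\Sigma^{\ell}_{k}}_{d}$ is the realization of the simplicial space $k \mapsto \Mor^{k}(\mathbf{Cob}^{\Sigma^{\ell}_{k}}_{d})$, i.e.\ the $k$-fold fibre product of $\Mor$ over $\Ob$. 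So one needs: for each fixed $k$, the realization of $\ell \mapsto \mathcal{N}_{k}\mathbf{C}^{\Sigma^{\ell}_{k}}_{d}(\triangle^{\ell}_{e})$ is weakly equivalent to the space $\Mor^{k}(\mathbf{Cob}^{\Sigma^{\ell}_{k}}_{d})$. Since the latter space is (via (\ref{eq: topological structure}) and Lemma \ref{lemma: fibre-bundles}) a disjoint union of pieces of the form $\mathcal{M}_{\Sigma^{\ell}_{k}}(\cdots)\times(\text{open subset of }\R^{k+1})$, and each $\mathcal{M}_{\Sigma^{\ell}_{k}}(\cdots)$ carries the structure of an (infinite-dimensional) smooth manifold by the remark following Theorem \ref{theorem: local triviality}, the required equivalence is exactly the statement that for a smooth manifold $Y$ the singular simplicial set of smooth simplices $\ell \mapsto C^{\infty}(\triangle^{\ell}_{e}, Y)$ realizes to a space weakly equivalent to $Y$. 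This is the standard fact that smooth and continuous singular complexes of a manifold are weakly equivalent, used already in \cite{GMTW 09}; one must check it applies to the infinite-dimensional manifolds $\mathcal{M}_{\Sigma^{\ell}_{k}}(\cdots)$ in the sense of \cite{KM 97}, but this is routine because these spaces are metrizable and locally look like open subsets of locally convex spaces.

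Having the level-wise weak equivalence of simplicial spaces $|\ell \mapsto \mathcal{N}_{k}\mathbf{C}^{\Sigma^{\ell}_{k}}_{d}(\triangle^{\ell}_{e})| \xrightarrow{\ \simeq\ } \Mor^{k}(\mathbf{Cob}^{\Sigma^{\ell}_{k}}_{d})$, compatible with the simplicial structure maps in $k$ (face and degeneracy maps on both sides are induced by the functor structure of $\mathbf{Cob}^{\Sigma^{\ell}_{k}}_{d}$ and correspond under the sheaf isomorphism), I would then realize in the remaining $k$-direction and appeal to the standard fact that a degree-wise weak equivalence of (good, or Reedy-cofibrant) simplicial spaces induces a weak equivalence on realizations. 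Both simplicial spaces are good in the sense of Segal because the degeneracies are inclusions of closed sub-objects (the identity morphisms form a closed subspace of $\Mor$, as in \cite{GMTW 09}), so this step is clean. Concatenating gives $B|\mathbf{C}^{\Sigma^{\ell}_{k}}_{d}| \simeq B\mathbf{Cob}^{\Sigma^{\ell}_{k}}_{d}$.

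The main obstacle is the smooth-versus-continuous comparison for the infinite-dimensional moduli spaces $\mathcal{M}_{\Sigma^{\ell}_{k}}(\cdots)$: one must be sure that "smooth map from $\triangle^{\ell}_{e}$" in the sheaf-theoretic sense agrees with the Kriegl--Michor notion and that the resulting smooth singular complex computes the correct weak homotopy type. In \cite{GMTW 09} and \cite{P 13} this is handled by the fact that these are classifying spaces of diffeomorphism groups and the relevant fibre bundles are smooth in the appropriate Fréchet sense; here nothing genuinely new happens because the extra Baas--Sullivan structure only cuts out closed conditions (products with the fixed $P_{i}$'s and collar compatibilities), so the moduli spaces remain smooth manifolds of the same flavour, and Theorem \ref{theorem: local triviality} provides the local triviality needed. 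Everything else is a formal transcription of \cite[Section 4]{GMTW 09}, which is why the proof can be stated briefly.

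\begin{proof}
This follows by transcribing the argument of \cite[Section 4]{GMTW 09}. By Lemma \ref{lemma: fibre-bundles} together with the description of the topology on $\mathbf{Cob}^{\Sigma^{\ell}_{k}}_{d}$ in (\ref{eq: topological structure}), there is a natural isomorphism of $\mathbf{Cat}$-valued sheaves $C^{\infty}(\;\underline{\hspace{.2cm}}\;,\mathbf{Cob}^{\Sigma^{\ell}_{k}}_{d}) \xrightarrow{\ \cong\ } \mathbf{C}^{\Sigma^{\ell}_{k}}_{d}$; evaluating on the extended simplices and passing to $k$-th nerves gives, for each $k$, an isomorphism of simplicial sets between $\mathcal{N}_{k}\mathbf{C}^{\Sigma^{\ell}_{k}}_{d}(\triangle^{\bullet}_{e})$ and the smooth singular complex of the $k$-fold composable morphism space $\Mor^{k}(\mathbf{Cob}^{\Sigma^{\ell}_{k}}_{d})$. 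By (\ref{eq: topological structure}) and Lemma \ref{lemma: fibre-bundles} the latter space is a disjoint union of pieces $\mathcal{M}_{\Sigma^{\ell}_{k}}(\,\cdots)\times U$ with $U$ an open subset of a Euclidean space, and each $\mathcal{M}_{\Sigma^{\ell}_{k}}(\,\cdots)$ is a smooth (infinite-dimensional) manifold by the remark following Theorem \ref{theorem: local triviality}. Hence the realization of the smooth singular complex is weakly equivalent to $\Mor^{k}(\mathbf{Cob}^{\Sigma^{\ell}_{k}}_{d})$, exactly as in \cite{GMTW 09}. These equivalences are compatible with the simplicial structure maps in the $k$-direction, and both simplicial spaces $k \mapsto |\ell \mapsto \mathcal{N}_{k}\mathbf{C}^{\Sigma^{\ell}_{k}}_{d}(\triangle^{\ell}_{e})|$ and $k \mapsto \Mor^{k}(\mathbf{Cob}^{\Sigma^{\ell}_{k}}_{d})$ are good, since identity morphisms form a closed subspace of the morphism space. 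Realizing in the $k$-direction and using that a degree-wise weak equivalence of good simplicial spaces is a weak equivalence on realizations yields $B|\mathbf{C}^{\Sigma^{\ell}_{k}}_{d}| \simeq B\mathbf{Cob}^{\Sigma^{\ell}_{k}}_{d}$.
\end{proof}
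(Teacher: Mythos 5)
Your proposal is correct and follows essentially the same route as the paper, which likewise deduces the statement from the topological structure (\ref{eq: topological structure}) together with the natural isomorphism $C^{\infty}(\;\underline{\hspace{.2cm}}\;,\mathbf{Cob}^{\Sig{k}{\ell}}_{d}) \cong \mathbf{C}^{\Sig{k}{\ell}}_{d}$ and the corresponding argument of \cite[Proposition 2.9]{GMTW 09}. You have merely spelled out the details (smooth versus continuous singular complexes of the moduli spaces $\mathcal{M}_{\Sig{k}{\ell}}(\cdots)$, and realization of good simplicial spaces) that the paper leaves implicit.
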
 
The next proposition is then proven in the same way as \cite[Proposition 4.4]{GMTW 09}.
\begin{proposition} \label{prop: pitchfork to non-pitchfork}
The natural transformation $\mathbf{C}^{\Sig{k}{\ell}}_{d} \longrightarrow \mathbf{C}^{\Sig{k}{\ell}, \pitchfork}_{d}$ induced by inclusion induces a weak homotopy equivalence $B|\mathbf{C}^{\Sig{k}{\ell}}_{d}| \stackrel{\simeq} \longrightarrow B|\mathbf{C}^{\Sig{k}{\ell}, \pitchfork}_{d}|$. 
\end{proposition}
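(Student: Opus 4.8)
The plan is to follow the argument of \cite[Proposition 4.4]{GMTW 09} essentially verbatim, since the only difference between $\mathbf{C}^{\Sig{k}{\ell}}_{d}$ and $\mathbf{C}^{\Sig{k}{\ell}, \pitchfork}_{d}$ is that the former requires the $\Sig{k}{\ell}$-submanifold $W$ to be a \emph{product} (a cylinder on a fixed $\Sig{k}{\ell}$-submanifold $M$) near the levels $\nu = a, b$, whereas the latter only requires $\pi$ to be submersive there. The key point is that near such a level the map $(\pi, f)$ is already submersive onto $X \times (\nu - \varepsilon, \nu + \varepsilon)$, so $W$ restricted to a small neighborhood of that level is \emph{diffeomorphic} (as a $\Sig{k}{\ell}$-submersion) to a product; the content is to make this ``straightening'' natural enough to be performed by a concordance, compatibly with the singularity stratification.

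First I would set up the standard framework: to show $B|\mathbf{C}^{\Sig{k}{\ell}}_{d}| \to B|\mathbf{C}^{\Sig{k}{\ell}, \pitchfork}_{d}|$ is a weak equivalence it suffices, by the realization lemma for simplicial spaces, to check that for each $p$ the map on $p$-simplices $\mathcal{N}_{p}\mathbf{C}^{\Sig{k}{\ell}}_{d} \to \mathcal{N}_{p}\mathbf{C}^{\Sig{k}{\ell}, \pitchfork}_{d}$ induces a weak equivalence of representing spaces, and by the relative surjectivity criterion of \cite{GMTW 09} (their Appendix, using that both are sheaves) it is enough to show: given $W \in \mathbf{C}^{\Sig{k}{\ell}, \pitchfork}_{d}(X)$ whose germ over a closed set $A \subseteq X$ already lies in $\mathbf{C}^{\Sig{k}{\ell}}_{d}$, there is a concordance from $W$ to an element of $\mathbf{C}^{\Sig{k}{\ell}}_{d}(X)$, rel a neighborhood of $A$. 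The concordance is built by a two-step deformation: shrink the ``transition'' windows at the ends so that the submersive collars become honest product collars via the flow of a vector field lifting $\partial / \partial t$ (this uses properness from condition ii.\ of Definition \ref{defn: C transverse} and the $\Sig{k}{\ell}$-submersion condition to integrate the flow while preserving every stratum $\partial_{I}W$ and every factorization $\partial_{I}W = \beta_{I}W \times P^{I}$), and then re-embed using the contractibility of the embedding spaces.

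The second, and more delicate, ingredient is the re-embedding: after flowing, the fibre over $\{x\} \times J_{\nu}(x)$ is abstractly a cylinder $J_{\nu}(x) \times M_{x}$, but to satisfy condition vi.\ of Definition \ref{defn: C sheaf} it must be embedded in $\{x\} \times J_{\nu}(x) \times \R^{k}_{+} \times \R^{\infty} \times \R^{\bar p + \bar m}$ as a \emph{metric} product, i.e.\ independent of the $J_{\nu}$-coordinate. Here I would invoke Theorem \ref{thm: contractibility} (weak contractibility of $\mathcal{E}_{\Sig{k}{\ell}}(M)$) and its local-triviality companion Theorem \ref{theorem: local triviality}, exactly as in the GMTW argument, to interpolate within a single path component of the relevant embedding space between the given embedding near the end and a product embedding; because the embedding spaces are contractible this interpolation exists and can be done continuously in $x$ and relative to $A$. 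Stretching the interpolation out along an extra $\R$-coordinate produces the required concordance in $\mathbf{C}^{\Sig{k}{\ell}, \pitchfork}_{d}(X \times \R)$.

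The main obstacle I anticipate is bookkeeping rather than conceptual: all of the deformations — the flow that straightens the submersive collar, and the embedding interpolation — must be carried out \emph{simultaneously and compatibly on every stratum} $\partial_{I}W$ for $I \subseteq \kseto{k}$, respecting the collar compatibility conditions (a) and (b) of Definition \ref{defn: closed Manifold def} and the product structures $\partial_{I}W = \beta_{I}W \times P^{I}$ of Definition \ref{defn: Sigma-submanifold}. Concretely, the lifting vector field must be chosen tangent to each $\partial_{i}W$ and split as (field on $\beta_{I}W$) $\times$ (zero on $P^{I}$) over each $\partial_{I}W$ with $I \subseteq \kset{\ell}$; the inductive structure of Definition \ref{defn: closed Manifold def} (first build the field on the deepest strata $\beta_{I}W$ with $|I|$ maximal, then extend outward using the collars $h_{I}$) makes this possible, and this is precisely the place where the extra hypotheses over the plain-manifold case of \cite{GMTW 09} are used. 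Since the statement asserts the proof is ``the same'' as \cite[Proposition 4.4]{GMTW 09}, I would in the writeup simply indicate that the GMTW straightening argument applies, flagging that all constructions are performed stratum-by-stratum using the collar data, and that the contractibility inputs \ref{thm: contractibility} and \ref{theorem: local triviality} replace the corresponding unstructured embedding-space facts.
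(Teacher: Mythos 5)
Your proposal takes essentially the same route as the paper, which gives no independent argument and simply states that the proposition is proven in the same way as \cite[Proposition 4.4]{GMTW 09}; the collar-straightening by a lifted flow, the re-embedding via contractibility of the embedding spaces, and the stratum-by-stratum compatibility you flag are exactly the intended adaptation. No gap.
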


We now define a $\mathbf{Set}$-valued sheaf whose representing space has the weak homotopy type of the classifying space $B\mathbf{Cob}^{\Sigma^{\ell}_{k}}_{d}$. 
  \begin{defn} \label{defn: D-sheaf} Let $X \in \Ob(\mathcal{X})$. 
We define $\mathbf{D}^{\Sig{k}{\ell}}_{d}(X)$ to be the set of $(\dim(X) + d)$-dimensional $\Sigma^{\ell}_{k}$-submanifolds  
$W \; \subset \; X\times \R\times (\R^{k}_{+}\times \R^{\infty}\times \R^{\bar{p}+\bar{m}})$
subject to the following conditions:
\begin{enumerate} \itemsep.2cm
\item[i.] The projection map $\pi: W \longrightarrow X$ is a $\Sigma^{\ell}_{k}$-submersion. 
\item[ii.] The projection $(\pi, f): W \longrightarrow X\times\R$ is a proper $\Sigma_{k}^{\ell}$-map.
\item[iii.] For any compact submanifold $K \subset X$, there exists $n \in \N$ such that 
$$\pi^{-1}(K) \subset K\times\R\times(\R^{k}_{+}\times \R^{n}\times \R^{\bar{p}+\bar{m}}).$$
\end{enumerate}
\end{defn}
It can be verified that $\mathbf{D}^{\Sig{k}{\ell}}_{d}$ satisfies the sheaf condition. 
We define a $\mathbf{Cat}$-valued sheaf $\mathbf{D}^{\Sig{k}{\ell}, \pitchfork}_{d}$ which can be compared directly to both $\mathbf{C}^{\Sig{k}{\ell}, \pitchfork}_{d}$ and $\mathbf{D}^{\Sig{k}{\ell}}_{d}$.

\begin{defn} For $X \in \Ob(\mathcal{X})$, we define $\Ob(\mathbf{D}^{\Sig{k}{\ell}, \pitchfork}_{d}(X))$ to be the set of pairs $(W, a)$ subject to the following conditions:
\begin{enumerate} \itemsep.2cm
\item[i.] $W \in  \mathbf{D}^{\Sig{k}{\ell}}_{d}(X)$, 
\item[ii.] $a: X \longrightarrow \R$ is a smooth function, 
\item[iii.] for each $x \in X$, the restriction map $f|_{\pi^{-1}(x)}: \pi^{-1}(x) \longrightarrow \R$ is transverse to $a(x) \in \R$. 
\end{enumerate}
The morphism set $\Mor(\mathbf{D}^{\Sig{k}{\ell}, \pitchfork}_{d}(X))$ is defined to be the set of triples $(W, a, b)$ such that 
\begin{enumerate} \itemsep.2cm
\item[(a)] $(W, a), (W, b) \in Ob(\mathbf{D}^{\Sig{k}{\ell}, \pitchfork}_{d}(X))$, 
\item[(b)] $a(x) \leq b(x)$ for all $x \in X$. 
\end{enumerate}
Two morphisms $(W, a, b), (V, c, d) \in \mathbf{D}^{\Sig{k}{\ell}, \pitchfork}_{d}(X)$ can be composed if and only if $V = W$ and $b(x) = c(x)$ for all $x \in X$. 
The composition of $(W, a, b)$ and $(V, c, d)$ is then given by $(W, a, d)$.  
\end{defn}
There is a natural transformation 
 $\gamma: \mathbf{D}^{\Sig{k}{\ell}, \pitchfork}_{d} \longrightarrow \mathbf{C}^{\Sig{k}{\ell}, \pitchfork}_{d}$
which is defined in exactly the same way as in \cite[Page 17]{GMTW 09}.
The following proposition is then proven in the same way as \cite[Proposition 4.3]{GMTW 09}.
\begin{proposition} \label{proposition: D' to C} The natural transformation $\gamma: \mathbf{D}^{\Sig{k}{\ell}, \pitchfork}_{d} \longrightarrow \mathbf{C}^{\Sig{k}{\ell}, \pitchfork}_{d}$ induces a weak homotopy equivalence 
$B|\gamma|: B|\mathbf{D}^{\Sig{k}{\ell}, \pitchfork}_{d}| \stackrel{\simeq} \longrightarrow B|\mathbf{C}^{\Sig{k}{\ell}, \pitchfork}_{d}|.$
\end{proposition}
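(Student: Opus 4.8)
The plan is to run the argument of \cite[Proposition 4.3]{GMTW 09} essentially unchanged: the whole point is that every piece of $\Sigma^{\ell}_{k}$-differential topology the argument consumes has already been assembled above --- contractibility of the embedding spaces (Theorem~\ref{thm: contractibility}), local triviality of the moduli fibrations (Theorem~\ref{theorem: local triviality}), and the way collars restrict to boundary strata --- so that only bookkeeping separates the present statement from its counterpart in \cite{GMTW 09}. Concretely, I would feed $\gamma$ into the standard representability criterion for maps of sheaves on $\mathcal{X}$.

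Recall that criterion (the sheaf-theoretic lemma underlying \cite{GMTW 09}): a natural transformation $u\colon\mathcal{F}\to\mathcal{G}$ of $\mathbf{Set}$-valued sheaves induces a weak homotopy equivalence $|\mathcal{F}|\to|\mathcal{G}|$ as soon as it is \emph{relatively surjective}, meaning that for every $X\in\Ob(\mathcal{X})$, every closed $A\subseteq X$, every $s\in\mathcal{G}(X)$, and every germ $\hat t$ along $A$ of a section of $\mathcal{F}$ lifting the restriction of $s$, there is a global $t\in\mathcal{F}(X)$ with $u(t)=s$ and with germ $\hat t$ along $A$. For $\mathbf{Cat}$-valued sheaves one applies this degreewise to the nerve sheaves $\mathcal{N}_{k}$ and uses $B|\mathcal{F}|\cong|k\mapsto|\mathcal{N}_{k}\mathcal{F}||$. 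A $k$-simplex of the nerve of $\mathbf{C}^{\Sig{k}{\ell},\pitchfork}_{d}(X)$ (resp.\ of $\mathbf{D}^{\Sig{k}{\ell},\pitchfork}_{d}(X)$) is a single $\Sigma^{\ell}_{k}$-submanifold $W$ together with a weakly increasing tuple $a_{0}\le\cdots\le a_{k}\colon X\to\R$ subject to the submersion/transversality conditions at each level, and $\gamma$ touches only $W$, restricting it to a germ-neighbourhood of $X\times[a_{0},a_{k}]$. Hence relative surjectivity of $\mathcal{N}_{k}\gamma$ reduces, uniformly in $k$, to the single lifting problem for $\gamma$, which I would then solve.

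For that lift, take $W\in\mathbf{C}^{\Sig{k}{\ell},\pitchfork}_{d}(X;a,b)$, a $\Sigma^{\ell}_{k}$-submanifold defined only over $X\times(a-\varepsilon,b+\varepsilon)$. Condition iii of Definition~\ref{defn: C transverse} says that over neighbourhoods of the two levels the map $(\pi,f)$ is a $\Sigma^{\ell}_{k}$-submersion, so after shrinking $\varepsilon$ the two ends of $W$ are product cylinders on closed $(d-1)$-dimensional $\Sigma^{\ell}_{k}$-submanifolds $M_{a},M_{b}$ (in families over $X$). Gluing on the infinite half-cylinders $M_{a}\times(-\infty,a-\varepsilon]$ and $M_{b}\times[b+\varepsilon,\infty)$ --- each carrying the product $\Sigma^{\ell}_{k}$-structure, i.e.\ the collars $h_{I}$ of $M_{\nu}$ crossed with the interval and the $\beta_{I}$-factorizations inherited from $M_{\nu}$ --- yields a $\Sigma^{\ell}_{k}$-submanifold over all of $X\times\R$ that is proper over $X\times\R$, and recording $a$ (and $b$) as the slicing function(s) gives an element of $\mathbf{D}^{\Sig{k}{\ell},\pitchfork}_{d}(X)$ with $\gamma(\,\cdot\,)=W$. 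For the relative version I would run the same construction but, using a collar of $A$ and a partition of unity, interpolate between the prescribed germ lift $\hat t$ near $A$ and the standard cylindrical extension away from $A$; since both agree with $W$ over $X\times(a-\varepsilon,b+\varepsilon)$ and both are cylindrical at the ends, the interpolation remains a $\Sigma^{\ell}_{k}$-submanifold and has germ $\hat t$ along $A$. That is exactly relative surjectivity.

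The only step with genuine content beyond \cite{GMTW 09} --- and the place I expect to spend the most care --- is checking that the glued (and interpolated) object really is a $\Sigma^{\ell}_{k}$-submanifold in the sense of Definition~\ref{defn: Sigma-submanifold}: one must produce an embedding into $X\times\R\times\R^{k}_{+}\times\R^{\infty}\times\R^{\bar{p}+\bar{m}}$ satisfying the stratumwise conditions i--iii of Definition~\ref{defn: i-P maps}, and see that these survive the partition-of-unity interpolation. I expect this to be routine in the end: the cylinder $M_{\nu}\times\R$ of a $\Sigma^{\ell}_{k}$-submanifold is canonically one (extend the embedding constantly in the $\R$-coordinate, which is legitimate because over the cylindrical region the embedding is already independent of that coordinate), the gluing along the collared ends is smooth and preserves every $\partial_{I}$ and $\beta_{I}$ by construction of the product structure, and properness over $X\times\R$ together with condition iii of Definition~\ref{defn: D-sheaf} holds because over each compact $K\subseteq X$ only finitely many of the $\R^{\infty}$-coordinates are used. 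Granting this bookkeeping, the sheaf criterion applies and delivers the asserted weak homotopy equivalence $B|\gamma|\colon B|\mathbf{D}^{\Sig{k}{\ell},\pitchfork}_{d}|\xrightarrow{\simeq}B|\mathbf{C}^{\Sig{k}{\ell},\pitchfork}_{d}|$.
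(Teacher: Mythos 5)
Your overall strategy --- reduce $B|\gamma|$ to a weak equivalence of representing spaces in each nerve degree and verify the relative surjectivity (lifting) criterion for maps of sheaves on $\mathcal{X}$ --- is exactly the route the paper intends, since it simply defers to the proof of \cite[Proposition 4.3]{GMTW 09}, which runs this way. The gap is in your solution of the lifting problem. You assert that condition iii of Definition \ref{defn: C transverse} makes the two ends of $W \in \mathbf{C}^{\Sig{k}{\ell}, \pitchfork}_{d}(X; a, b)$ into \emph{product} cylinders after shrinking $\varepsilon$, and you then glue on literal half-infinite cylinders $M_{a}\times(-\infty, a-\varepsilon]$ and $M_{b}\times[b+\varepsilon,\infty)$. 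This is false: the submersion condition near the levels, together with properness, only makes $(\pi,f)^{-1}(X\times(\nu-\varepsilon,\nu+\varepsilon))$ a locally trivial bundle over $X\times(\nu-\varepsilon,\nu+\varepsilon)$; the embedding of the fibre into $\R^{k}_{+}\times\R^{\infty}\times\R^{\bar{p}+\bar{m}}$ may still vary with the $\R$-coordinate. (Already for $X=\star$ and $d=1$ the graph of a nonconstant function $t\mapsto g(t)$ is a proper submersion over an interval but is not a product near its ends, and its union with a translated constant slice is not smooth at the seam.) Being literally a product near the levels is precisely condition vi of Definition \ref{defn: C sheaf}, i.e.\ exactly what distinguishes $\mathbf{C}^{\Sig{k}{\ell}}_{d}$ from $\mathbf{C}^{\Sig{k}{\ell}, \pitchfork}_{d}$; if it held automatically, Proposition \ref{prop: pitchfork to non-pitchfork} would be vacuous. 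As written, your glued object need not be a smooth submanifold, so the lift is not produced.

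The repair is the construction actually used in \cite{GMTW 09}: stretch rather than glue. Choose a smooth family of diffeomorphisms $\varphi_{x}\colon \R \to (a(x)-\varepsilon, b(x)+\varepsilon)$ equal to the identity on a neighbourhood of $[a(x), b(x)]$, and let $W'$ be the preimage of $W$ under the diffeomorphism $(x,t,z)\mapsto(x,\varphi_{x}(t),z)$. Since this map is over $X$ and is the identity in the $\R^{k}_{+}\times\R^{\infty}\times\R^{\bar{p}+\bar{m}}$ coordinates, $W'$ is a $\Sigma^{\ell}_{k}$-submanifold over $X\times\R$ with $\pi'$ a $\Sigma^{\ell}_{k}$-submersion and $(\pi',f')$ proper, it agrees with $W$ near $X\times[a,b]$, and the fibrewise transversality over the levels required for $\mathbf{D}^{\Sig{k}{\ell},\pitchfork}_{d}$ is supplied by condition iii of Definition \ref{defn: C transverse}. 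The relative version is then handled, much as you propose, by interpolating the choice of the reparametrizations $\varphi_{x}$ (rather than the submanifolds themselves) with a partition of unity near the closed set $A$. With this substitution the rest of your argument goes through, and the $\Sigma^{\ell}_{k}$-bookkeeping you flag at the end is indeed routine, because the stretching leaves the collars $h_{I}$ and the $\beta_{I}$-factorizations untouched.
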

Now, $\mathbf{D}^{\Sig{k}{\ell}}_{d}$ is a $\mathbf{Set}$-valued sheaf, however we may consider it to be a $\mathbf{Cat}$-valued sheaf by considering $\mathbf{D}^{\Sig{k}{\ell}}_{d}(X)$ to be the category with $\Ob(\mathbf{D}^{\Sig{k}{\ell}}_{d}(X)) = \mathbf{D}^{\Sig{k}{\ell}}_{d}(X)$ and where the only morphisms are the identity morphisms.  
Defined in this way, the representing space $|\mathbf{D}^{\Sig{k}{\ell}}_{d}|$ is a topological category with only identity morphisms and thus, there is a weak homotopy equivalence
$B|\mathbf{D}^{\Sig{k}{\ell}}_{d}| \simeq |\mathbf{D}^{\Sig{k}{\ell}}_{d}|.$
We have a natural transformation of $\mathbf{Cat}$-valued sheaves,
$$F: \mathbf{D}^{\Sig{k}{\ell}, \pitchfork}_{d} \longrightarrow \mathbf{D}^{\Sig{k}{\ell}}_{d}$$
determined by the rule 
$$(W, a) \mapsto W \quad \text{for $(W, a) \in \mathbf{D}^{\Sig{k}{\ell}}_{d}(X)$, \; $X \in \Ob(\mathcal{X})$.}$$
The following proposition is proven in the same way as \cite[Proposition 4.2]{GMTW 09}
\begin{proposition} \label{proposition: D' to D}
The natural transformation $F: \mathbf{D}^{\Sig{k}{\ell}, \pitchfork}_{d} \longrightarrow \mathbf{D}^{\Sig{k}{\ell}}_{d}$ induces a weak homotopy equivalence
$
B|F|: B|\mathbf{D}^{\Sig{k}{\ell}, \pitchfork}_{d}| \stackrel{\simeq} \longrightarrow  |\mathbf{D}^{\Sig{k}{\ell}}_{d}|.
$
\end{proposition}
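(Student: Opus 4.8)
The plan is to follow the proof of \cite[Proposition 4.2]{GMTW 09} essentially verbatim, so I will only indicate the structure of that argument and the two places where the $\Sig{k}{\ell}$-structure enters. Since $\mathbf{D}^{\Sig{k}{\ell}}_{d}$ is being regarded as a $\mathbf{Cat}$-valued sheaf with only identity morphisms, $B|\mathbf{D}^{\Sig{k}{\ell}}_{d}| \simeq |\mathbf{D}^{\Sig{k}{\ell}}_{d}|$, so it suffices to show that $F$ induces a weak homotopy equivalence $B|\mathbf{D}^{\Sig{k}{\ell},\pitchfork}_{d}| \to B|\mathbf{D}^{\Sig{k}{\ell}}_{d}|$ of classifying spaces of $\mathbf{Cat}$-valued sheaves. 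As in \cite{GMTW 09}, I would deduce this from a relative contractibility statement: for each $X \in \Ob(\mathcal{X})$ and each $W \in \mathbf{D}^{\Sig{k}{\ell}}_{d}(X)$, let $\mathcal{A}_{W}$ be the sheaf on the slice category $\mathcal{X}/X$ whose value on $(g\colon V \to X)$ is the poset, ordered by $a \le a'$, of all smooth functions $a\colon V \to \R$ for which $(g^{*}W, a)$ lies in $\mathbf{D}^{\Sig{k}{\ell},\pitchfork}_{d}(V)$. It is enough to show that $|\mathcal{A}_{W}|$ is weakly contractible for every such $W$: an argument of the type used in \cite{GMTW 09} (reducing a weak equivalence of classifying spaces of sheaves of categories to fibrewise contractibility) then yields that $B|F|$ is a weak homotopy equivalence.

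The contractibility of $\mathcal{A}_{W}$ is the geometric heart, and here the argument of \cite{GMTW 09} combines with the corner-transversality techniques of \cite{G 08}. Non-emptiness: for any $g\colon V \to X$ there is a smooth $a\colon V \to \R$ such that, for every $v \in V$, the value $a(v)$ is simultaneously a regular value of $f$ restricted to $\pi^{-1}(v)$ and to each of the finitely many corner strata $\partial_{I}\pi^{-1}(v)$, $I \subseteq \widehat{\langle k \rangle}$. This follows from the parametrized Sard theorem applied to $(\hat{\pi},\hat{f})\colon g^{*}W \to V\times\R$ and to its restrictions to the substrata $\partial_{I}(g^{*}W)$; since $\pi$ is a $\Sig{k}{\ell}$-submersion and $(\pi,f)$ is proper (conditions i., ii. of Definition \ref{defn: D-sheaf}), the set of such $a$ is open in $C^{\infty}(V,\R)$, and being a finite intersection of open dense subsets it is open and dense. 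One then checks the flexibility hypotheses needed to conclude contractibility in the sense of \cite{GMTW 09}: an admissible $a$ over $V$ pulls back to an admissible $a$ over $V\times\R$, and two admissible cuts over $V$ that coincide near a closed subset $A \subseteq V$ are joined by a path of admissible cuts via a partition-of-unity convex combination, admissibility being $C^{\infty}$-open. This gives $|\mathcal{A}_{W}| \simeq *$, and hence the proposition.

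The one place requiring care beyond the bare transcription of \cite{GMTW 09} is the corner transversality: the cut value $a(v)$ must be a regular value not only of $f|_{\pi^{-1}(v)}$ but of its restriction to every stratum $\partial_{I}\pi^{-1}(v)$, coherently with the collar structure of Definition \ref{defn: closed Manifold def}, and one must verify that this stronger condition is still open and relatively dense. This is exactly the issue handled in \cite[\S2]{G 08} for manifolds with corners: the stratification $\{\partial_{I}W\}_{I\subseteq\widehat{\langle k\rangle}}$ is finite, each stratum is a smooth manifold on which $(\pi,f)$ restricts to a proper map with $\pi$ a submersion, and the collar embeddings ensure that a value regular on $\partial_{I}W$ can be taken regular on a neighborhood of $\partial_{I}W$ in $W$ after shrinking the collar, so the finite conjunction of these open dense conditions is again open and dense. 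The additional Baas--Sullivan data --- the product factorizations $\partial_{I}W = \beta_{I}W\times\phi_{I}(P^{I})$ of condition iii. of Definition \ref{defn: Sigma-submanifold} --- creates no further obstruction, since it is structure carried by $W$ in the fibre directions of the embedding and is untouched by the choice of the cut function $a\colon V\to\R$, which lives on the base. Once these openness and density statements are in hand, the remainder of the proof of \cite[Proposition 4.2]{GMTW 09} goes through word for word.
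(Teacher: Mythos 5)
Your proposal is correct and matches the paper's approach: the paper's entire proof is the assertion that the proposition ``is proven in the same way as \cite[Proposition 4.2]{GMTW 09}'', and your argument is a faithful unfolding of that proof, with the only genuinely new input --- transversality on all corner strata $\partial_{I}W$ and its compatibility with the collar and product structure --- correctly isolated and handled as in \cite{G 08}. One small caution: a convex combination of admissible cut functions need not itself be admissible (regular values do not form a convex set), so the interpolation step should be run as in \cite{GMTW 09} --- via locally constant choices, $C^{\infty}$-openness, and the relative concordance criterion --- rather than by a literal partition-of-unity average of two arbitrary admissible cuts.
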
 

Combining Propositions \ref{eq: singular set eq}, \ref{prop: pitchfork to non-pitchfork}, \ref{proposition: D' to C} and \ref{proposition: D' to D}, we obtain a zig-zag of weak homotopy equivalences
\begin{equation} \label{zig zag 1}
\xymatrix{
B\mathbf{Cob}^{\Sigma^{\ell}_{k}}_{d} \ar[r]^{\simeq} & B|\mathbf{C}^{\Sig{k}{\ell}}_{d}| & B|\mathbf{C}^{\Sig{k}{\ell}, \pitchfork}_{d}| \ar[l]_{\simeq} &  B|\mathbf{D}^{\Sig{k}{\ell}, \pitchfork}_{d}| \ar[l]_{\simeq} \ar[r]^{\simeq} & |\mathbf{D}^{\Sig{k}{\ell}}_{d}|.
}
\end{equation}
\begin{corollary}
There is a weak homotopy equivalence $B\mathbf{Cob}^{\Sigma^{\ell}_{k}}_{d} \simeq |\mathbf{D}^{\Sig{k}{\ell}}_{d}|$. 
\end{corollary}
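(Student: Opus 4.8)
The plan is to obtain the equivalence simply by concatenating the chain (\ref{zig zag 1}), which has already been assembled from results in hand. First I would record the four equivalences that make up the zig-zag: Proposition \ref{eq: singular set eq} supplies a weak homotopy equivalence $B\mathbf{Cob}^{\Sig{k}{\ell}}_{d} \simeq B|\mathbf{C}^{\Sig{k}{\ell}}_{d}|$; Proposition \ref{prop: pitchfork to non-pitchfork} supplies $B|\mathbf{C}^{\Sig{k}{\ell}}_{d}| \simeq B|\mathbf{C}^{\Sig{k}{\ell}, \pitchfork}_{d}|$; Proposition \ref{proposition: D' to C} supplies $B|\mathbf{D}^{\Sig{k}{\ell}, \pitchfork}_{d}| \simeq B|\mathbf{C}^{\Sig{k}{\ell}, \pitchfork}_{d}|$; and Proposition \ref{proposition: D' to D}, together with the observation that $|\mathbf{D}^{\Sig{k}{\ell}}_{d}|$ is the realization of a topological category all of whose morphisms are identities (so that $B|\mathbf{D}^{\Sig{k}{\ell}}_{d}| \simeq |\mathbf{D}^{\Sig{k}{\ell}}_{d}|$), supplies $B|\mathbf{D}^{\Sig{k}{\ell}, \pitchfork}_{d}| \simeq |\mathbf{D}^{\Sig{k}{\ell}}_{d}|$.

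Next I would splice these four equivalences together along their shared endpoints to form the single zig-zag displayed in (\ref{zig zag 1}), taking care that two of the four arrows point ``the wrong way'' and hence must be inverted up to homotopy. This inversion is justified because every space occurring in (\ref{zig zag 1}) is the geometric realization of a simplicial space and therefore has the homotopy type of a CW complex; by Whitehead's theorem a weak homotopy equivalence between such spaces admits a homotopy inverse. Composing the resulting chain of maps and chosen inverses produces a weak homotopy equivalence $B\mathbf{Cob}^{\Sig{k}{\ell}}_{d} \simeq |\mathbf{D}^{\Sig{k}{\ell}}_{d}|$, which is exactly the assertion.

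I do not expect a genuine obstacle here: all of the substantive content --- the transversality and scanning arguments adapted from \cite{GMTW 09} --- is packaged inside Propositions \ref{eq: singular set eq}, \ref{prop: pitchfork to non-pitchfork}, \ref{proposition: D' to C}, and \ref{proposition: D' to D}. The only point demanding care is bookkeeping: verifying that the sources and targets of the four equivalences match up so that (\ref{zig zag 1}) is a well-formed zig-zag, and tracking the orientations of the arrows. If one wants to be scrupulous about the CW hypothesis used to invert arrows, one may appeal to the standard fact that realizations of (good) simplicial spaces are CW complexes, which is implicit throughout this section anyway.
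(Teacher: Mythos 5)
Your proposal is correct and is exactly the paper's argument: the corollary is obtained by concatenating the four weak equivalences of Propositions \ref{eq: singular set eq}, \ref{prop: pitchfork to non-pitchfork}, \ref{proposition: D' to C} and \ref{proposition: D' to D} into the zig-zag (\ref{zig zag 1}). Your extra remark about inverting the wrong-way arrows via CW structures is harmless but not needed, since a zig-zag of weak equivalences already suffices for the assertion as stated.
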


\section{A Fibre Sequence of Classifying Spaces} \label{section: A fibre sequence of classifying spaces}
\subsection{Fibre sequences}
We will need to use some further results from \cite[4.1.5]{MW 07} regarding the concordance theory of sheaves on $\mathcal{X}$. We recall now a certain property for sheaves, analogous to the \textit{covering homotopy property} for spaces. 
\begin{defn} 
A natural transformation of sheaves
$\alpha: \mathcal{F} \longrightarrow \mathcal{G}$
is said to have the \textit{concordance lifting property} if for any $X \in \Ob(\mathcal{X})$, the following condition holds: let 
$$s \in \mathcal{F}(X) \quad \text{and} \quad h \in \mathcal{G}(X\times \R)$$ 
be elements such that there exists $\epsilon > 0$ with
$$\text{proj}_{X}^{*}(\alpha(s))\mid_{X\times(-\infty, \epsilon)} \; = \; h \mid_{X\times(-\infty, \epsilon)}.$$
Then there exists $\widehat{h} \in \mathcal{F}(X\times \R)$ such that
$$\widehat{h}\mid_{X\times(-\infty, \widehat{\epsilon})} \; = \; \text{proj}_{X}^{*}(\alpha(s))\mid_{X\times(-\infty, \widehat{\epsilon})} \quad \text{and} \quad \alpha(\widehat{h}) \; = \; h,$$
where $\widehat{\epsilon}$ is some positive real number, possibly different than $\epsilon$.
\end{defn}
The main result that we will need to use is the following. 
\begin{proposition} \label{prop: concordance lifting property} 
Let $\mathcal{E}, \mathcal{F}$, and $\mathcal{G}$ be sheaves on $\mathcal{X}$ and let 
$u: \mathcal{E} \rightarrow \mathcal{G}$ and $v: \mathcal{F} \rightarrow \mathcal{G}$ be natural transformations. 
Let $\mathcal{E}\times_{\mathcal{G}}\mathcal{F}$ denote the sheaf defined by forming the fibred-product of $u$ and $v$. 
If $u$ has the concordance lifting property, then the projection $\mathcal{E}\times_{\mathcal{G}}\mathcal{F} \rightarrow \mathcal{F}$ has the concordance lifting property and the commutative square,
$$\xymatrix{
|\mathcal{E}\times_{\mathcal{G}}\mathcal{F}| \ar[rr] \ar[d] && |\mathcal{F}| \ar[d]^{|v|} \\
|\mathcal{E}| \ar[rr]^{|u|} && |\mathcal{G}|.
}$$
is homotopy cartesian. 
\end{proposition}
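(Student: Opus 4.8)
The plan is to follow the strategy of \cite[4.1.5]{MW 07} and \cite[Proposition 4.3.1]{MW 07} (which is exactly the analogous statement for the case of a single sheaf): first establish that the projection $p: \mathcal{E}\times_{\mathcal{G}}\mathcal{F} \to \mathcal{F}$ has the concordance lifting property, and then deduce from this that the square of representing spaces is homotopy cartesian. The two halves are essentially independent, and the first is short.

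For the concordance lifting property of $p$, suppose we are given $s = (e_0, f_0) \in (\mathcal{E}\times_{\mathcal{G}}\mathcal{F})(X)$ and a concordance $h \in \mathcal{F}(X\times\R)$ with $\mathrm{proj}_X^*(f_0)$ agreeing with $h$ on $X\times(-\infty,\epsilon)$. We must produce a lift in $(\mathcal{E}\times_{\mathcal{G}}\mathcal{F})(X\times\R)$, i.e.\ a pair $(\widehat{e}, \widehat{f})$ over $X\times\R$ with $\widehat{f} = h$, restricting to $(\mathrm{proj}_X^*(e_0), \mathrm{proj}_X^*(f_0))$ near $X\times(-\infty,\widehat\epsilon)$, and satisfying the fibred-product compatibility $u(\widehat e) = v(\widehat f)$. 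The idea is to take $\widehat f := h$, note that $v(h) \in \mathcal{G}(X\times\R)$ is a concordance that agrees near $X\times(-\infty,\epsilon)$ with $\mathrm{proj}_X^*(v(f_0)) = \mathrm{proj}_X^*(u(e_0))$, and then apply the concordance lifting property of $u$ to the element $e_0 \in \mathcal{E}(X)$ and the concordance $v(h) \in \mathcal{G}(X\times\R)$. This yields $\widehat e \in \mathcal{E}(X\times\R)$ with $u(\widehat e) = v(h) = v(\widehat f)$, agreeing with $\mathrm{proj}_X^*(e_0)$ near $X\times(-\infty,\widehat\epsilon)$ for some $\widehat\epsilon > 0$; possibly shrinking $\widehat\epsilon \le \epsilon$, the pair $(\widehat e, \widehat f)$ is then the desired lift, since by construction it lies in the fibred product and its restriction to a neighbourhood of $X\times(-\infty,\widehat\epsilon)$ is $\mathrm{proj}_X^*(s)$.

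For the homotopy-cartesian conclusion, the point is that the representing-space functor $\mathcal{F} \mapsto |\mathcal{F}|$ sends a natural transformation with the concordance lifting property to a map which is a \emph{quasifibration} in an appropriate sense — more precisely, following \cite{MW 07}, a natural transformation with the concordance lifting property induces on representing spaces a map whose homotopy fibre over a point of $|\mathcal{G}|$ coincides (up to weak equivalence) with the representing space of the fibre sheaf, and concordance-lifting is precisely the condition that makes the relevant comparison a weak equivalence. Since $p: \mathcal{E}\times_{\mathcal{G}}\mathcal{F} \to \mathcal{F}$ has the concordance lifting property (just shown) and the original $u: \mathcal{E}\to\mathcal{G}$ does too, both $|p|$ and $|u|$ are quasifibrations in this sense; moreover $\mathcal{E}\times_{\mathcal{G}}\mathcal{F}$ is, by construction, the fibred product, so on each "fibre sheaf" over a point the two projections restrict to the same sheaf. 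Comparing homotopy fibres of $|p|$ over a point $x\in|\mathcal{F}|$ with those of $|u|$ over $|v|(x)\in|\mathcal{G}|$ gives that the square is homotopy cartesian.

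\emph{The main obstacle} I anticipate is the second step: being careful about what "induces a quasifibration" means at the level of representing spaces, since $|\mathcal{F}|$ is the realization of the simplicial set $k\mapsto\mathcal{F}(\Delta^k_e)$ and one needs the concordance lifting property to interact correctly with this simplicial structure — essentially one must run the argument of \cite[Appendix]{MW 07} (or \cite[\S 4]{GMTW 09}) verbatim, checking that the relevant maps of simplicial sets are degreewise surjective fibrations or that a Bousfield–Kan type fibrancy/realization lemma applies. The first step, by contrast, is a formal diagram chase with the sheaf axioms and the definition of concordance lifting, and should present no difficulty. Since the statement and its proof are, as the paper notes, word-for-word the same as in \cite{MW 07} (the sheaves $\mathcal{E}, \mathcal{F}, \mathcal{G}$ being arbitrary), the honest writeup is to carry out the short fibred-product argument for concordance lifting of $p$ explicitly and then cite \cite[4.1.5]{MW 07} (together with \cite[Proposition 4.3.1]{MW 07}) for the passage to representing spaces.
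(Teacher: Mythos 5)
Your proposal is correct and matches the paper's treatment: the paper states this proposition as a recollection from \cite[4.1.5]{MW 07} and gives no proof of its own, exactly as you anticipate. Your explicit verification of the concordance lifting property for the projection (lift $v(h)$ through $u$ using naturality and $u(e_0)=v(f_0)$, then pair the lift with $h$) is the standard formal argument and is sound, and deferring the homotopy-cartesian conclusion to the representing-space machinery of \cite{MW 07} is precisely what the paper does.
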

For $X \in \Ob(\mathcal{X})$, 
any element $z \in \mathcal{G}(\star)$ gives rise to an element (of the same name) $z \in \mathcal{G}(X)$, defined by pulling back $z \in \mathcal{G}(\star)$ over the constant map. 
\begin{defn} \label{defn: fibre sheaf} Let $\alpha: \mathcal{F} \longrightarrow \mathcal{G}$ and $z \in  \mathcal{G}(\star)$ be as above. 
The \textit{fibre} of the map $\alpha$ over $z$ is the sheaf $\Fib_{\alpha}^{z}$, defined by
$\Fib^{z}_{\alpha}(X) = \{ s \in \mathcal{F}(X) \; | \; \alpha(s) = z \}.$
\end{defn}
\begin{corollary} \label{concordance corollary} If $v: \mathcal{F} \longrightarrow \mathcal{G}$ has the concordance lifting property then for all $z \in  \mathcal{G}(\star)$, the sequence
$\xymatrix{
|\Fib^{z}_{\alpha}| \longrightarrow |\mathcal{F}| \stackrel{|v|} \longrightarrow |\mathcal{G}|
}$
is a homotopy fibre-sequence. 
\end{corollary}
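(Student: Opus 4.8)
The statement to be proven is Corollary~\ref{concordance corollary}: if $v: \mathcal{F} \longrightarrow \mathcal{G}$ has the concordance lifting property, then for every $z \in \mathcal{G}(\star)$ the sequence $|\Fib^{z}_{\alpha}| \to |\mathcal{F}| \to |\mathcal{G}|$ is a homotopy fibre-sequence. The idea is to recognize $\Fib^{z}_{\alpha}$ as a fibred product and then feed this into Proposition~\ref{prop: concordance lifting property}. Concretely, let $\star$ denote the one-point sheaf (the terminal object in sheaves on $\mathcal{X}$, i.e. $\star(X) = \{\ast\}$ for every $X$), and let $z: \star \longrightarrow \mathcal{G}$ denote the natural transformation that on each $X$ picks out the element $z \in \mathcal{G}(X)$ obtained by pulling back $z \in \mathcal{G}(\star)$ along the constant map $X \to \star$. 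Then directly from Definition~\ref{defn: fibre sheaf} one checks that there is a natural isomorphism of sheaves
$$
\Fib^{z}_{\alpha} \;\cong\; \star \times_{\mathcal{G}} \mathcal{F},
$$
where the fibred product is formed from $z: \star \to \mathcal{G}$ and $v = \alpha: \mathcal{F} \to \mathcal{G}$: an element of $(\star\times_{\mathcal{G}}\mathcal{F})(X)$ is a pair consisting of the unique point of $\star(X)$ together with $s \in \mathcal{F}(X)$ such that $\alpha(s) = z$, which is exactly the data of $\Fib^{z}_{\alpha}(X)$.

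Now I would apply Proposition~\ref{prop: concordance lifting property} with $\mathcal{E} = \star$, $u = z: \star \to \mathcal{G}$, and $v = \alpha: \mathcal{F} \to \mathcal{G}$. The hypothesis of the corollary is that $\alpha$ has the concordance lifting property; to invoke Proposition~\ref{prop: concordance lifting property} in the form that makes the square homotopy cartesian, I use the symmetric version (or simply apply it with the roles arranged so that the map with the concordance lifting property is the one whose base change we take). The conclusion is that the square
$$
\xymatrix{
|\star \times_{\mathcal{G}} \mathcal{F}| \ar[rr] \ar[d] && |\mathcal{F}| \ar[d]^{|\alpha|} \\
|\star| \ar[rr]^{|z|} && |\mathcal{G}|
}
$$
is homotopy cartesian. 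Since $\star$ is the terminal sheaf, its representing space $|\star|$ is the geometric realization of the simplicial set that is a single point in every degree, hence $|\star| \simeq \ast$. Combining this with the isomorphism $\Fib^{z}_{\alpha} \cong \star\times_{\mathcal{G}}\mathcal{F}$ from the previous step, the homotopy cartesian square says precisely that $|\Fib^{z}_{\alpha}|$ is the homotopy fibre of $|\alpha|: |\mathcal{F}| \to |\mathcal{G}|$ over the basepoint determined by $|z|$, which is the assertion of the corollary.

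The one point requiring a little care — and the main obstacle, such as it is — is the direction of the concordance lifting hypothesis in Proposition~\ref{prop: concordance lifting property}: that proposition assumes $u$ has the concordance lifting property and concludes the base change $\mathcal{E}\times_{\mathcal{G}}\mathcal{F} \to \mathcal{F}$ does as well, whereas here it is $\alpha = v$ that is assumed to have the property. This is handled by the standard observation that the homotopy cartesian conclusion is symmetric in $u$ and $v$: one may equally apply the proposition reading $v$ as the map with the concordance lifting property, obtaining that $\star\times_{\mathcal{G}}\mathcal{F} \to \star$ has the concordance lifting property and that the same square is homotopy cartesian. Alternatively, one notes that since $|\star|$ is contractible, being homotopy cartesian is equivalent to the left vertical map being a weak equivalence onto the homotopy fibre, and this follows from the concordance lifting property of $\alpha$ by the same argument used to prove Proposition~\ref{prop: concordance lifting property}. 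Everything else is formal: the identification of the fibre sheaf as a fibred product is immediate from the definitions, and the contractibility of $|\star|$ is elementary.
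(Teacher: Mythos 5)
Your proof is correct and is essentially the intended argument: the paper states this as an immediate corollary of Proposition \ref{prop: concordance lifting property} together with Definition \ref{defn: fibre sheaf}, and the identification $\Fib^{z}_{\alpha}\cong \star\times_{\mathcal{G}}\mathcal{F}$ with $|\star|\simeq\ast$ is exactly how it is meant to follow. Your handling of the direction of the hypothesis — applying the proposition with $\mathcal{F}$ in the role of $\mathcal{E}$ and $u=v=\alpha$, so that the map with the concordance lifting property is the one being base-changed along $z:\star\to\mathcal{G}$ — is the right way to resolve the (purely notational) asymmetry in the proposition's statement.
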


\subsection{A fibre sequence of classifying spaces}

We now consider the natural transformation
\begin{equation}
\partial_{\ell}: \mathbf{D}^{\Sigma^{\ell-1}_{k}}_{d} \longrightarrow \mathbf{D}^{\Sigma^{\ell-1}_{k-1}}_{d-1},
\end{equation}
which is defined by sending an element $W \in \mathbf{D}^{\Sigma^{\ell-1}_{k}}_{d}(X)$ to the manifold 
$$\partial_{\ell}W \; \subset \; X\times\R\times\R^{k}_{+, \{\ell\}^{c}}\times\R^{\infty}\times\R^{\bar{p}+\bar{m}},$$
which determines an element in $\mathbf{D}^{\Sigma^{\ell-1}_{k-1}}_{d-1}(X)$.
We will need the following technical result. 

\begin{lemma} \label{lemma: bockstein k-l concordance} The natural transformation
$\partial_{1}: \mathbf{D}^{\Sig{k}{0}}_{d} \longrightarrow \mathbf{D}^{\Sig{k-1}{0}}_{d-1}$ 
has the concordance lifting
  property.
\end{lemma}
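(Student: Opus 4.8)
The plan is to verify the concordance lifting property directly from the definition, using the fact that $\mathbf{D}^{\Sig{k}{0}}_{d}$ consists of manifolds with corners (the case $\ell = 0$) and that the operation $\partial_1$ merely restricts a submanifold to the locus where the first corner-coordinate vanishes. So suppose we are given $W \in \mathbf{D}^{\Sig{k}{0}}_{d}(X)$ and a concordance $h \in \mathbf{D}^{\Sig{k-1}{0}}_{d-1}(X \times \R)$ of $\partial_1 W$, i.e.\ an element of $\mathbf{D}^{\Sig{k-1}{0}}_{d-1}(X\times\R)$ agreeing with $\mathrm{proj}_X^*(\partial_1 W)$ on $X \times (-\infty, \epsilon)$. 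Concretely, $h$ is a $\Sig{k-1}{0}$-submanifold
$$V \subset (X\times\R)\times\R\times\R^{k}_{+, \{1\}^c}\times\R^\infty\times\R^{\bar p + \bar m}$$
whose projection to $X\times\R$ is a $\Sig{k-1}{0}$-submersion, proper over $(X\times\R)\times\R$, and which near $X\times(-\infty,\epsilon)$ is the constant concordance on $\partial_1 W$. I want to produce $\widehat h \in \mathbf{D}^{\Sig{k}{0}}_{d}(X\times\R)$ — a $\Sig{k}{0}$-submanifold $\widehat W \subset (X\times\R)\times\R\times\R^k_+\times\R^\infty\times\R^{\bar p+\bar m}$ — that restricts to $\mathrm{proj}_X^*(W)$ near $X\times(-\infty,\widehat\epsilon)$ and whose $\partial_1$ equals $V$.

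The key step is a collar construction. Since $V$ near the region where it matches $\partial_1 W$ looks like a product with the first coordinate slot absent, and since $\partial_1 W \subset W$ is a boundary face equipped (by Definition \ref{defn: closed Manifold def}(ii)) with a collar $h_{\{1\}}: \partial_1 W \times [0,\infty)^{k+1}_{\{1\}} \hookrightarrow W$, the idea is to build $\widehat W$ by gluing three pieces: (1) over a neighborhood of $X\times(-\infty,\widehat\epsilon)$, simply take $\mathrm{proj}_X^*(W)$; (2) over the rest of $X\times\R$, take a collar $V \times [0,\infty)$ on the $\{1\}$-face, i.e.\ thicken the concordance $V$ in the first corner direction using the embedding $\phi_1$-data only where it is needed and otherwise the trivial collar; (3) interpolate between these over the overlap region, which is possible because on the overlap both descriptions agree with the constant concordance on $W$ near its collar. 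The submersion and properness conditions (i)--(iii) of Definition \ref{defn: D-sheaf} for $\widehat W$ follow because they hold for $V$ and for $W$ and are preserved under taking a product collar, and the pull-back square (\ref{eq: embeddings pull-back}) together with the Serre fibration Lemma \ref{lemma: serre fibration embeddings 1} guarantees that the ambient embeddings can be arranged compatibly into $\R^k_+ \times \R^\infty \times \R^{\bar p + \bar m}$; the $\R^\infty$-coordinate gives enough room to realize the glued manifold as an honest submanifold, and condition (iii) of Definition \ref{defn: D-sheaf} (the bound $\pi^{-1}(K)\subset K\times\R\times\R^k_+\times\R^n\times\R^{\bar p+\bar m}$ over compacta) is met because it holds for $V$ over compact subsets of $X\times\R$.

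I expect the main obstacle to be the gluing in the overlap region: one must choose the interpolation so that the result is genuinely smooth as a manifold with corners, that the first corner coordinate continues to detect exactly the face that maps to $V$, and that the resulting family is still a submersion over $X$ — in particular, the ``tilting'' of the $\R$-coordinate (the function $f$) as one moves from the product piece to the collar piece must be controlled so that properness over $(X\times\R)\times\R$ is not destroyed. This is handled by a standard bump-function argument: choose a smooth $\lambda: X\times\R \to [0,1]$ that is $0$ on a neighborhood of $X\times(-\infty,\widehat\epsilon)$ and $1$ outside a slightly larger neighborhood, and use $\lambda$ to interpolate between the two collar structures; smoothness and the corner condition are local and follow from the compatibility conditions (a), (b) of Definition \ref{defn: closed Manifold def}(ii). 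The verification that $\alpha(\widehat h) = \partial_1 \widehat W = V$ is then immediate from the construction, since restricting to the first-coordinate-zero locus of the collar piece returns $V$ and the product piece over $X\times(-\infty,\widehat\epsilon)$ restricts to $\partial_1 W = \mathrm{proj}_X^*(\partial_1 W)|$, which agrees with $V$ there by hypothesis. This completes the proof.
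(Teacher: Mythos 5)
Your overall strategy --- keep $\mathrm{proj}_X^*$ of the given element near the initial end of the cylinder and graft the concordance of its $\partial_1$-face onto the collar of that face --- is the right one, and it is also the paper's. But there is a genuine gap in how you carry out the grafting, and it sits exactly at the point you flag as ``the main obstacle''. Your interpolation is governed by a bump function $\lambda$ on $X\times\R$ alone, i.e.\ it depends only on the concordance parameter $t$, and your piece (2) is the \emph{trivial} thickening $V\times[0,\infty)$ of the concordance in the corner direction. Now fix $(x,t)$ with $t$ large, so that the fibre $V_{(x,t)}$ is no longer equal to $(\partial_1 W)_x$ but only concordant to it. The fibre of your candidate $\widehat{W}$ over $(x,t)$ must be a $d$-dimensional $\Sigma^{0}_{k}$-manifold whose $\partial_1$-face is $V_{(x,t)}$ and which satisfies the properness condition ii.\ of Definition \ref{defn: D-sheaf}. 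The trivial collar $V_{(x,t)}\times[0,\infty)$ fails properness, and it cannot be capped off by $W_x$ minus its inner collar, because its outer edge $V_{(x,t)}\times\{\kappa\}$ does not match the inner edge $(\partial_1 W)_x\times\{\kappa\}$ of the cap. For $t\geq\epsilon$ there is no ``overlap region'' in the $X\times\R$ direction over which your two pieces agree, so a bump function in $t$ alone cannot produce the gluing.

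The missing idea is that the interpolation must be a function of the pair $(t,s)$, where $s$ is the collar coordinate of the $\partial_1$-face. The paper chooses a map $\rho$ on $\R\times[0,1)$ which is the identity near $s=0$ (so that $\partial_1$ of the lift is exactly the given concordance) and which for $s\geq\tfrac12$ collapses $t$ to a value below $\epsilon$, where the concordance is still constant; it then pulls the concordance back along the induced map $X\times\R\times[0,\kappa)\to X\times\R$ to obtain the collar piece. The fibre of that piece over $(x,t)$ is the trace of the concordance run backwards from time $t$ to time $0$ as $s$ increases --- a mapping cylinder, not a product --- and only this version has outer edge $(\partial_1 W)_x$, so that it glues onto $\mathrm{proj}_X^*W$ with its inner half-collar removed. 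Your appeal to Lemma \ref{lemma: serre fibration embeddings 1} and the pull-back square of embedding spaces does not substitute for this: elements of $\mathbf{D}^{\Sigma^{0}_{k}}_{d}(X\times\R)$ are actual submanifolds satisfying the submersion and properness conditions, and the lift has to be exhibited by such an explicit construction.
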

\begin{proof} 
Fix an element $X \in \Ob(\mathcal{X})$ and let $W \in
\mathbf{D}^{\Sig{k-1}{0}}_{d-1}(X\times\R)$ be a concordance. 
Let $V \in \mathbf{D}^{\Sig{k}{0}}_{d}(X)$ and $\varepsilon > 0$ be such that
\begin{equation} \label{eq: concrdance set-up} 
\text{proj}_{X}^{*}(\partial_{1}V)\mid_{X\times(-\infty, \varepsilon)} \; = \;
W\mid_{X\times(-\infty, \varepsilon)}.
\end{equation}
To prove the lemma we will construct a
concordance $\widetilde{V} \in
\mathbf{D}^{\Sig{k}{0}}_{d}(X\times\R)$ such that
$$ \partial_{1}\widetilde{V} \; = \; W \quad \text{and} \quad
\widetilde{V}\mid_{X\times(-\infty, \varepsilon)} \; = \;
\text{proj}_{X}^{*}(V)\mid_{X\times(-\infty, \varepsilon)}.$$ 

Since $V$ is a $\Sigma^{\ell}_{k}$-submanifold, if follows from condition ii. of Definition \ref{defn: i-P maps} that 
 $$\partial_{1}V \subset X\times\R^{k}_{+, \{1\}^{c}}\times\R^{\infty},$$
and that there exists a positive real number $\kappa$ such that 
$$
V \cap \bigg(X\times[0, \kappa)\times\R^{k}_{+, \{1\}^{c}}\times\R^{\infty}\bigg) \; = \; \partial_{1}V\times[0, \kappa).
$$
 Let $\varepsilon > 0$ be the real number from (\ref{eq: concrdance set-up}) and let
$\rho: \R\times [0,1) \longrightarrow \R\times [0, 1)$
be a smooth function that satisfies the following conditions: 
\begin{enumerate} \itemsep.3cm
\item[i.] The image of $\rho$ is contained in the subspace $\bigg(\R\times[0, \frac{2}{3})\bigg)\bigcup \bigg((-\infty, \frac{2\cdot\varepsilon}{3})\times [0,1)\bigg)$.
\item[ii.] $\rho$ is equal to the identity when restricted to the subspace  
$$\bigg(\R\times[0, \tfrac{1}{3})\bigg)\bigcup \bigg((-\infty, \tfrac{\varepsilon}{3})\times[0, 1)\bigg).$$

\item[iii.] Whenever $(t, s) \; \in \; [\frac{2\cdot\varepsilon}{3}, \infty)\times[\frac{1}{2}, 1)$, the equation $\rho(t, s) = (\frac{2\cdot\varepsilon}{3}, s)$ is satisfied.

\end{enumerate}

Let $\lambda: \R\times[0, 1) \longrightarrow \R$ be the projection $(t, s) \mapsto t$ where $(t, s) \in \R\times[0, 1)$. Using these two functions $\rho$ and $\lambda$, we define a new map
$$\widehat{\rho} : X\times\R\times[0, \kappa) \; \longrightarrow \; X\times\R$$
by the formula
\begin{equation} \label{eq: rho formula} (x, \; t, \; s) \; \mapsto \; (x, \; \; \lambda\circ \rho(t,  \; \tfrac{s}{\kappa}) ). \end{equation}

It follows directly from the definition of $\rho$ and $\lambda$ that for all $x \in X$, the following conditions are satisfied:
\begin{enumerate} \itemsep.3cm
\item[(a)] if $s \leq \frac{\kappa}{3}$ or $t \leq \frac{\varepsilon}{3}$, \; then \; $\widehat{\rho}(x, t, s) = (x, t)$, 
\item[(b)] if $(t, s) \in [\frac{2\cdot\varepsilon}{3}, \infty)\times[\tfrac{\kappa}{2}, \kappa)$, \;  then \; $\widehat{\rho}(x, t, s) = (x, \frac{2\cdot\varepsilon}{3})$.
\end{enumerate}
We now form the pull-back,
$$\xymatrix{
\widehat{\rho}^{*}(W) \ar[d]^{\widehat{\pi}} \ar[rr] && W \ar[d]^{\pi}\\
X\times\R\times[0, \epsilon_{1}) \ar[rr]^{\widehat{\rho}} && X\times\R}$$
where 
\begin{equation} \widehat{\rho}^{*}(W) = \label{eq: pull-back set} \{((x, t, s), w) \; \in \; X\times\R\times[0, \kappa)\times W\; \; | \; \; \widehat{\rho}(x, t, s) = \pi(w)\}. \end{equation} 
By definition of $\mathbf{D}^{\Sig{k-1}{0}}_{d-1}(X\times\R)$, $\pi$ is a $\Sigma^{0}_{k}$-submersion (see Definition \ref{defn: sigma-k submersion}).
It follows from the factorization in condition iii. of Definition \ref{defn: D-sheaf} that $\widehat{\rho}^{*}(W)$ is a $\Sig{k}{0}$-manifold.
The inclusion map 
$$W \hookrightarrow (X\times\R)\times\R\times\R^{k}_{+, \{1\}^{c}}\times\R^{\infty},$$
induces a natural embedding 
$$i: \widehat{\rho}^{*}(W) \; \hookrightarrow \; (X\times\R)\times\R\times[0,\kappa)\times\R^{k}_{+, \{1\}^{c}}\times\R^{\infty}.$$ 
We will denote by $W' \subset (X\times\R)\times\R\times[0,\kappa)\times\R^{k}_{+, \{1\}^{c}}\times\R^{\infty}$ the $\Sigma^{0}_{k}$-submanifold given by the image of this embedding. 
It can be verified directly using condition (b) and (\ref{eq: pull-back set}) that the intersections
$$W' \;  \cap \; \bigg[(X\times\R)\times\R\times[\tfrac{\kappa}{2},\kappa)\times\R^{k}_{+, \{1\}^{c}}\times\R^{\infty}\bigg],$$
$$\text{proj}_{X}^{*}V \; \cap \; \bigg[(X\times\R)\times\R\times[\tfrac{\kappa}{2},\kappa)\times\R^{k}_{+, \{1\}^{c}}\times\R^{\infty}\bigg],$$
are equal. 
Now, let $\widehat{V}$ denote the second of the above intersections. 
This implies that the subspace given by the union 
\begin{equation} \label{equation: concordance lift}
\widehat{V}\cup W' \subset (X\times\R)\times\R\times\R^{k}_{+}\times\R^{\infty}
\end{equation}
is a $\Sigma^{0}_{k}$-submanifold. 
Let 
$\widetilde{V}$ denote this $\Sigma^{0}_{k}$-submanifold given by the union in (\ref{equation: concordance lift}).
It follows that $\tilde{V} \in \mathbf{D}_{d}^{\Sigma^{0}_{k}}(X\times\R)$, that  $\partial_{1}\tilde{V} = W$, and that 
$$\tilde{V}|_{X\times(-\infty, \varepsilon)} \; = \; \text{proj}_{X}^{*}V|_{X\times(-\infty, \varepsilon)}.$$
This completes the proof of the lemma. 
\end{proof}
We now consider the natural transformation 
$\tau_{P_{\ell}}: \mathbf{D}^{\Sig{k-1}{\ell-1}}_{d-p_{\ell}-1} \longrightarrow \mathbf{D}^{\Sig{k-1}{\ell-1}}_{d}$
Defined by sending $W \in \mathbf{D}^{\Sig{k-1}{\ell-1}}_{d-p_{\ell}-1}(X)$ to the $d$-dimensional $\Sigma^{\ell}_{k}$-submanifold given by the product
$W\times\phi_{P_{\ell}}(P_{\ell}).$
Then consider the natural transformations 
$$\beta_{\ell}: \mathbf{D}^{\Sig{k}{\ell}}_{d} \longrightarrow \mathbf{D}^{\Sig{k-1}{\ell-1}}_{d-p_{\ell}-1}, \quad W \mapsto \beta_{\ell}W.$$ 
We then have:
\begin{corollary} \label{thm: homotopy cartesian sheaf}  For all $0 \leq \ell \leq k$, the commutative square
\begin{equation} \label{eq: cartesian rep space 2}
\xymatrix{
|\mathbf{D}^{\Sig{k}{\ell}}_{d}| \ar[rrr] \ar[d]^{|\beta_{\ell}|} &&& |\mathbf{D}^{\Sig{k}{\ell-1}}_{d}| \ar[d]^{|\partial_{\ell}|}\\
|\mathbf{D}^{\Sig{k-1}{\ell-1}}_{d-p_{\ell}-1}| \ar[rrr]^{|\tau_{P_{\ell}}|} &&& |\mathbf{D}^{\Sig{k-1}{\ell-1}}_{d-1}|
}
\end{equation}
is homotopy cartesian.
\end{corollary}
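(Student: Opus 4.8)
The plan is to realize the square (\ref{eq: cartesian rep space 2}) as the image under the representing-space functor $\mathcal F\mapsto |\mathcal F|$ of a fibred-product square of set-valued sheaves on $\mathcal X$ in which one of the two structure maps has the concordance lifting property, and then to quote Proposition \ref{prop: concordance lifting property}. So the proof splits into two ingredients: identifying $\mathbf{D}^{\Sig{k}{\ell}}_{d}$ with a fibred product, and checking that $\partial_\ell$ lifts concordances.

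First I would check that, as sheaves on $\mathcal X$, there is a natural isomorphism
$$\mathbf{D}^{\Sig{k}{\ell}}_{d} \;\cong\; \mathbf{D}^{\Sig{k}{\ell-1}}_{d} \times_{\mathbf{D}^{\Sig{k-1}{\ell-1}}_{d-1}} \mathbf{D}^{\Sig{k-1}{\ell-1}}_{d-p_{\ell}-1},$$
the fibred product being formed along $\partial_{\ell}$ and $\tau_{P_{\ell}}$. This is the sheaf-theoretic counterpart of the pull-back squares (\ref{equation: pull-back square}) and (\ref{eq: embeddings pull-back}): for $X\in\Ob(\mathcal X)$, an element of the right-hand side over $X$ is a $\Sig{k}{\ell-1}$-submanifold $W$ over $X\times\R$ together with a $\Sig{k-1}{\ell-1}$-submanifold $V$ such that $\partial_{\ell}W = V\times\phi_{P_{\ell}}(P_{\ell})$ as submanifolds; but this equality is precisely the datum of a structure map $\psi_\ell$ promoting $W$ to a $\Sig{k}{\ell}$-submanifold with $\beta_\ell W = V$, and conversely. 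I would then verify that under this bijection all the conditions of Definition \ref{defn: D-sheaf} (the submersion and properness conditions, and the local finiteness condition iii), together with the collar and structure-map compatibilities built into the notion of $\Sig{k}{\ell}$-submanifold, correspond on the two sides, and that the isomorphism is natural in $X$.

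Second I would observe that $\partial_{\ell}: \mathbf{D}^{\Sig{k}{\ell-1}}_{d} \to \mathbf{D}^{\Sig{k-1}{\ell-1}}_{d-1}$ has the concordance lifting property, by the verbatim argument proving Lemma \ref{lemma: bockstein k-l concordance} after relabelling the distinguished face $\partial_{1}$ used there as $\partial_{\ell}$. Indeed, the auxiliary map $\widehat{\rho}$ of (\ref{eq: rho formula}) and the resulting concordance $\widetilde{V}$ modify the given submanifold only inside a collar of the face $\partial_{\ell}$, leaving the other faces and, in particular, all of the product decompositions $\partial_{I}W = \beta_{I}W\times P^{I}$ for $I\subseteq\langle\ell-1\rangle$ untouched; hence $\widetilde{V}$ is again a $\Sig{k}{\ell-1}$-submanifold which agrees over the given subinterval with $\mathrm{proj}_X^*V$ and satisfies $\partial_{\ell}\widetilde{V}=W$. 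With these two ingredients, applying Proposition \ref{prop: concordance lifting property} with $u=\partial_{\ell}$ and $v=\tau_{P_{\ell}}$ shows that the square with corners $|\mathbf{D}^{\Sig{k}{\ell}}_{d}|$, $|\mathbf{D}^{\Sig{k-1}{\ell-1}}_{d-p_{\ell}-1}|$, $|\mathbf{D}^{\Sig{k}{\ell-1}}_{d}|$, $|\mathbf{D}^{\Sig{k-1}{\ell-1}}_{d-1}|$ is homotopy cartesian; transposing it — which preserves the property of being homotopy cartesian — yields (\ref{eq: cartesian rep space 2}).

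The only place that demands genuine care is the first step. The set-level bijection is immediate, but one must honestly check that every clause of Definition \ref{defn: D-sheaf} — especially the local finiteness condition iii and the germ-of-collar data encoded in the definition of a $\Sig{k}{\ell}$-submanifold — transports correctly across the fibred-product identification, and likewise that the explicit lift constructed in the second step respects the full $\Sig{k}{\ell-1}$-structure rather than merely the underlying manifold-with-corners. Neither of these is difficult, but both require the bookkeeping to be carried out carefully; everything else is a formal consequence of Proposition \ref{prop: concordance lifting property}.
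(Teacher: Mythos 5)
Your proposal is correct and follows essentially the same route as the paper: exhibit the sheaf square as a fibred product, establish the concordance lifting property for the $\partial$-map via the argument of Lemma \ref{lemma: bockstein k-l concordance}, and apply Proposition \ref{prop: concordance lifting property}. The only (cosmetic) difference is that the paper proves the lifting property just for $\partial_{1}$ on $\Sigma^{0}_{k}$-sheaves and handles general $\ell$ by induction, whereas you generalize the lifting lemma directly to $\partial_{\ell}$, noting correctly that the construction of $\widetilde{V}$ must be checked to preserve the product decompositions $\partial_{I}W=\beta_{I}W\times P^{I}$.
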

\begin{proof}
Consider the commutative diagram,
$$
\xymatrix{
\mathbf{D}^{\Sig{k}{1}}_{d} \ar[rrr] \ar[d]^{\beta_{1}} &&& \mathbf{D}^{\Sig{k}{0}}_{d} \ar[d]^{\partial_{1}}\\
\mathbf{D}^{\Sig{k-1}{0}}_{d-p_{\ell}-1} \ar[rrr]^{\tau_{P_{1}}} &&& \mathbf{D}^{\Sig{k-1}{0}}_{d-1}.}
$$
It follows by inspection that this is a pull-back square. 
By Lemma \ref{lemma: bockstein k-l concordance} it follows that the left-vertical map $\beta_{1}$ has the concordance lifting property.
It then follows from Proposition \ref{prop: concordance lifting property} that the lemma holds for $\ell = 1$. 
The general case of the lemma then follows by induction. 
\end{proof}

We obtain the following corollary, which is a restatement of Theorem \ref{thm: homotopy cartesian} from the introduction. 
\begin{corollary} The commutative square
$$
\xymatrix{
B\mathbf{Cob}_{d}^{\Sigma^{\ell+1}_{k}} \ar[rrr] \ar[d]^{B\beta_{\ell +1}} &&& B\mathbf{Cob}_{d}^{\Sigma^{\ell}_{k}} \ar[d]^{B\partial_{\ell+1}}\\
B\mathbf{Cob}_{d-p_{\ell+1}-1}^{\Sigma^{\ell}_{k-1}} \ar[rrr]^{B\tau_{P_{\ell+1}}} &&& B\mathbf{Cob}_{d-1}^{\Sigma^{\ell}_{k-1}}, 
}
$$
is homotopy-cartesian. 
\end{corollary}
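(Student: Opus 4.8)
The plan is to derive the statement from Corollary~\ref{thm: homotopy cartesian sheaf} --- applied with the index $\ell$ there replaced by $\ell+1$ here --- together with the zig-zag of weak homotopy equivalences~(\ref{zig zag 1}). The only real work is to promote that zig-zag to a zig-zag of \emph{commutative squares}, natural with respect to the four functors appearing in the square in question. First I would observe that each of $\beta_{\ell+1}$, $\partial_{\ell+1}$, $\tau_{P_{\ell+1}}$, and the inclusion $\mathbf{Cob}_{d}^{\Sigma^{\ell+1}_{k}} \to \mathbf{Cob}_{d}^{\Sigma^{\ell}_{k}}$ is modeled, in a uniform way, on each of the four sheaf models $\mathbf{C}^{\Sig{k}{\bullet}}_{d}$, $\mathbf{C}^{\Sig{k}{\bullet},\pitchfork}_{d}$, $\mathbf{D}^{\Sig{k}{\bullet},\pitchfork}_{d}$, $\mathbf{D}^{\Sig{k}{\bullet}}_{d}$ of Section~\ref{section: sheaf model}: $\partial_{\ell+1}$ restricts a family $W$ to the face $\partial_{\ell+1}W$, $\beta_{\ell+1}$ records its $\beta$-factor $\beta_{\ell+1}W$, $\tau_{P_{\ell+1}}$ forms the fibrewise product with $\phi_{P_{\ell+1}}(P_{\ell+1})$, and the last is induced by regarding a $\Sigma^{\ell+1}_{k}$-submanifold as a $\Sigma^{\ell}_{k}$-submanifold. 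Each of these operations affects only the ``singularity coordinates'' $\R^{k}_{+}\times\R^{\bar{p}+\bar{m}}$ (and, for $\partial_{\ell+1}$, the passage to a face), and leaves untouched the $\R$-factor that carries the height function $f$.

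Second I would check that each comparison natural transformation occurring in~(\ref{zig zag 1}) --- the inclusion $\mathbf{C}^{\Sig{k}{\bullet}}_{d} \hookrightarrow \mathbf{C}^{\Sig{k}{\bullet},\pitchfork}_{d}$, the transformation $\gamma$, the transformation $F$, and the isomorphism $C^{\infty}(-,\mathbf{Cob}^{\Sigma^{\bullet}_{k}}_{d}) \cong \mathbf{C}^{\Sigma^{\bullet}_{k}}_{d}$ --- commutes with these four functors. This is immediate from the defining formulas, because $\gamma$ and $F$ manipulate, respectively forget, only the coordinate carrying $f$, while the inclusions do nothing at all to the underlying submanifolds. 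Evaluating~(\ref{zig zag 1}) at each of the four corners of the square and assembling, I then obtain a finite chain of commutative squares connecting the square of Corollary~\ref{thm: homotopy cartesian sheaf} (with indices shifted as above) to the square in the present statement, in which every connecting comparison map is a weak homotopy equivalence.

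Finally, invoking the standard fact that a commutative square connected to a homotopy-cartesian square by an entrywise weak equivalence of commutative squares is again homotopy cartesian, and chaining this through the finitely many squares in the chain, gives the conclusion. I expect the bulk of the effort to go into the second step --- tracking the naturality of the comparison maps in~(\ref{zig zag 1}), which was not emphasized in Section~\ref{section: sheaf model} --- rather than into any genuinely new mathematics, since the substantive input (the concordance lifting property for $\beta_{1}$, and hence inductively for all $\beta_{\ell}$) has already been incorporated into Corollary~\ref{thm: homotopy cartesian sheaf}.
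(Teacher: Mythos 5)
Your proposal is correct and follows essentially the same route as the paper: the paper likewise transports the homotopy-cartesian square of Corollary~\ref{thm: homotopy cartesian sheaf} along the zig-zag~(\ref{zig zag 1}) of weak equivalences (assembled into a commutative cube) and concludes by the invariance of homotopy-cartesianness under entrywise weak equivalence. The naturality checks you flag as the ``bulk of the effort'' are exactly what the paper leaves implicit in asserting that its cube commutes.
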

\begin{proof}
The zig-zag of weak homotopy equivalences from (\ref{zig zag 1}) yields a commutative diagram,
 $$\xymatrix{ 
 & |\mathbf{D}^{\Sig{k-1}{\ell-1}}_{d}| \ar@{<~>}[dl]  \ar[dd] \ar[rr] && |\mathbf{D}^{\Sig{k}{\ell}}_{d}| \ar@{<~>}[dl]  \ar[dd]\\ 
 B\mathbf{Cob}_{d}^{\Sig{k-1}{\ell-1}} \ar[rr] \ar[dd] && B\mathbf{Cob}_{d}^{\Sig{k}{\ell}}  \ar[dd] &\\ 
 & |\mathbf{D}^{\Sig{k-1}{\ell-1}}_{d + p_{\ell}-1}|
  \ar@{<~>}[dl] \ar[rr] &&
  |\mathbf{D}^{\Sig{k-1}{\ell-1}}_{d-1}|
  \ar@{<~>}[dl]\\ B\mathbf{Cob}^{\Sig{k-1}{\ell-1}}_{d+p_{\ell}-1} \ar[rr] &&
 B\mathbf{Cob}^{\Sig{k-1}{\ell-1}}_{d-1}. &
  }$$ 
 By Corollary \ref{thm: homotopy cartesian sheaf}, the square in the back is homotopy cartesian. 
 Since all of the zig-zags are weak homotopy equivalences and the diagram commutes, it follows that the front square is homotopy-cartesian as well. 
 This completes the proof of the corollary.
\end{proof}

We now will identify the homotopy-fibres of the map $|\beta_{\ell}|$. 
The target space of the map $|\beta_{\ell}|: |\mathbf{D}^{\Sig{k}{\ell}}_{d}|  \longrightarrow |\mathbf{D}^{\Sig{k-1}{\ell-1}}_{d-p_{\ell}-1}|$ 
is not necessarily path-connected and thus it is not automatic that all fibres are homotopy equivalent. 
We will work to establish this fact.
The proof uses the following lemma.  
\begin{lemma}
The space $|\mathbf{D}^{\Sigma_{k}^{\ell}}_{d}|$ has the structure of a topological monoid that is associative up to homotopy. 
The induced monoid structure on $\pi_{0}(|\mathbf{D}^{\Sigma_{k}^{\ell}}_{d}|)$ is isomorphic to the bordism group $\Omega^{\Sigma^{k}_{\ell}}_{d-1}$. 
Furthermore, the map $|\beta_{\ell}|: |\mathbf{D}^{\Sig{k}{\ell}}_{d}| \longrightarrow |\mathbf{D}^{\Sig{k-1}{\ell-1}}_{d-p_{\ell}-1}|$ is a homomorphism with respect to the monoid structures.
\end{lemma}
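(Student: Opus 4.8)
The plan is to realise the monoid structure as a ``disjoint union'' operation at the level of the sheaf $\mathbf{D}^{\Sig{k}{\ell}}_{d}$, and then transport it to the representing space via the functoriality of $\mathcal{F} \mapsto |\mathcal{F}|$. Split off the first coordinate of the factor $\R^{\infty}$, writing $\R^{\infty} = \R \times \R^{\infty}$ with coordinate $u$ on the new first factor, and fix two smooth open embeddings $j_{0} \colon \R \hookrightarrow (-2,-1)$ and $j_{1} \colon \R \hookrightarrow (1,2)$. For $X \in \Ob(\mathcal{X})$ and $W_{0}, W_{1} \in \mathbf{D}^{\Sig{k}{\ell}}_{d}(X)$, let $\widehat{W}_{\nu}$ be the submanifold obtained from $W_{\nu}$ by applying $j_{\nu}$ to the $u$-coordinate, and set $\mu_{X}(W_{0}, W_{1}) := \widehat{W}_{0} \cup \widehat{W}_{1}$. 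Since $u$ is a coordinate of $\R^{\infty}$, it is disjoint from the half-space coordinates $\R^{k}_{+}$ carrying the collars of Definition~\ref{defn: closed Manifold def}(ii) and from the coordinates $\R^{\bar p + \bar m}$ carrying the product factorisations of Definition~\ref{defn: i-P maps}(iii); hence each $\widehat{W}_{\nu}$ is again a $\Sig{k}{\ell}$-submanifold, the two images lie in disjoint slabs of the $u$-coordinate, and their union inherits the structure of a $\Sig{k}{\ell}$-submanifold. One then checks that conditions i.--iii.\ of Definition~\ref{defn: D-sheaf} are inherited by $\mu_{X}(W_{0},W_{1})$: the submersion and properness conditions are verified fibrewise over $X$, resp.\ over $X\times\R$, and clearly hold for a disjoint union of two submanifolds satisfying them, and the local finiteness condition iii.\ holds after taking the maximum of the two bounds. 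As $\mu_{X}$ is performed slicewise it is natural in $X$, so $\mu$ is a natural transformation $\mathbf{D}^{\Sig{k}{\ell}}_{d} \times \mathbf{D}^{\Sig{k}{\ell}}_{d} \to \mathbf{D}^{\Sig{k}{\ell}}_{d}$ and induces a continuous multiplication on $|\mathbf{D}^{\Sig{k}{\ell}}_{d}|$. The empty submanifold provides a unit up to homotopy (the reparametrisation $W \mapsto \widehat{W}$ is concordant to the identity through embeddings of $\R$), and associativity up to homotopy follows from the fact that both bracketings squeeze the $u$-coordinates of the three inputs into three disjoint open subintervals of $\R$ and that any two such ordered configurations are joined by a path; the resulting slicewise concordance of natural transformations induces the desired homotopy.

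Next I would compute $\pi_{0}$. Since $\mathbf{D}^{\Sig{k}{\ell}}_{d}$ is a sheaf on $\mathcal{X}$, there is a natural bijection $\pi_{0}(|\mathbf{D}^{\Sig{k}{\ell}}_{d}|) \cong \mathbf{D}^{\Sig{k}{\ell}}_{d}[\star]$, the set of concordance classes over a point. An element of $\mathbf{D}^{\Sig{k}{\ell}}_{d}(\star)$ is a $d$-dimensional $\Sig{k}{\ell}$-submanifold $W \subset \R \times \R^{k}_{+} \times \R^{\infty} \times \R^{\bar p + \bar m}$ with $\partial_{0}W = \emptyset$ and $f \colon W \to \R$ proper; sending $W$ to the $\Sig{k}{\ell}$-cobordism class of the level set $f^{-1}(t)$ for a generic $t \in \R$ (for which $t$ is simultaneously a regular value of $f$ and of all restrictions $f|_{\partial_{I}W}$, so that $f^{-1}(t)$ is a closed $(d-1)$-dimensional $\Sig{k}{\ell}$-manifold) defines a map $\mathbf{D}^{\Sig{k}{\ell}}_{d}(\star) \to \Omega^{\Sig{k}{\ell}}_{d-1}$. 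Properness of $f$ shows that $f^{-1}([t_{0},t_{1}])$, after a small perturbation making $f$ and its restrictions boundary-generic, is a compact $\Sig{k}{\ell}$-cobordism, so the class is independent of $t$. The map is surjective ($M \mapsto \R \times M$), and given a concordance $V \in \mathbf{D}^{\Sig{k}{\ell}}_{d}(\R)$ between $W_{0}$ and $W_{1}$, slicing $V$ at a regular value $t$ of its time function and over a closed interval of regular values of its base coordinate yields a compact $\Sig{k}{\ell}$-cobordism between the corresponding level sets of $W_{0}$ and of $W_{1}$; conversely a cobordism $Z$ between level sets spreads to a concordance $\R \times (\text{collared copy of } Z)$. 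Hence the map descends to a bijection $\mathbf{D}^{\Sig{k}{\ell}}_{d}[\star] \cong \Omega^{\Sig{k}{\ell}}_{d-1}$, and under this identification $\mu$ corresponds to disjoint union of level sets, which is precisely the addition in $\Omega^{\Sig{k}{\ell}}_{d-1}$. Thus $\pi_{0}(|\mathbf{D}^{\Sig{k}{\ell}}_{d}|)$ is the bordism group as a monoid, and in particular a group (inverses exist because $M \sqcup M = \partial(M \times [0,1])$ in the unoriented setting).

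Finally, the natural transformation $\beta_{\ell} \colon \mathbf{D}^{\Sig{k}{\ell}}_{d} \to \mathbf{D}^{\Sig{k-1}{\ell-1}}_{d-p_{\ell}-1}$ sends $W$ to $\beta_{\ell}W$, an operation carried out on the submanifold itself that does not touch the $u$-coordinate used to define $\mu$; therefore $\beta_{\ell}(\mu_{X}(W_{0},W_{1})) = \mu_{X}(\beta_{\ell}W_{0}, \beta_{\ell}W_{1})$ for every $X$, i.e.\ $\beta_{\ell}$ commutes with the two multiplications as natural transformations, and consequently $|\beta_{\ell}|$ is a homomorphism of the monoid structures.

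I expect the main obstacle to be the first step: verifying carefully that the disjoint-union construction $\mu$ is genuinely a well-defined natural transformation of sheaves --- that the collar compatibilities of Definition~\ref{defn: closed Manifold def}, the product factorisations over $\langle \ell \rangle$ from Definition~\ref{defn: i-P maps}, and the submersion, properness and local-finiteness conditions of Definition~\ref{defn: D-sheaf} are all inherited by $\widehat{W}_{0} \cup \widehat{W}_{1}$ --- together with pinning down the unit and associativity homotopies coherently over the base $X$. Once $\mu$ is in place, the remaining two assertions reduce to standard bookkeeping about regular level sets and cobordisms.
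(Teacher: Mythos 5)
Your proposal is correct and follows the same overall strategy as the paper: disjoint union defines the monoid, $\pi_{0}$ is identified with concordance classes over a point and hence with $\Omega^{\Sigma^{\ell}_{k}}_{d-1}$ by taking transverse level sets of $f$, and $\beta_{\ell}$ visibly commutes with disjoint union. The one place you diverge is in how disjointness is arranged: the paper works with the partially defined product on the subsheaf $\mathbf{D}^{\Sigma^{\ell}_{k}}_{d}\bar{\times}\mathbf{D}^{\Sigma^{\ell}_{k}}_{d}$ of already-disjoint pairs and invokes a general position argument that the inclusion into the full product is a weak equivalence of sheaves, so the multiplication on $|\mathbf{D}^{\Sigma^{\ell}_{k}}_{d}|$ is only defined after inverting that equivalence (but then has strict unit $\emptyset$); you instead make the product everywhere defined by translating the two inputs into disjoint slabs of a spare coordinate of $\R^{\infty}$, at the cost of having a unit only up to the concordance $W\simeq\widehat{W}$. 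Both devices are standard and both suffice for the statement as given; your version avoids the sheaf-level weak equivalence but requires the (easy) check that the reparametrisation is concordant to the identity and that the two bracketings of a triple product are joined by a concordance, which you supply. The remaining two assertions are argued just as in the paper.
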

\begin{proof}
The monoid structure on $|\mathbf{D}^{\Sigma^{\ell}_{k}}_{d+1}|$ is defined in exactly the same way as in \cite[Proposition 11.1]{P 13} (see also \cite[proof of Theorem 3.8]{MW 07}). 
Let
$\mathbf{D}^{\Sigma^{\ell}_{k}}_{d+1}\bar{\times}\mathbf{D}^{\Sigma^{\ell}_{k}}_{d+1}$
be the sheaf defined by letting
$$(\mathbf{D}^{\Sigma^{\ell}_{k}}_{d+1}\bar{\times}\mathbf{D}^{\Sigma^{\ell}_{k}}_{d+1})(X) \; \subset \; \mathbf{D}^{\Sigma^{\ell}_{k}}_{d+1}(X)\times\mathbf{D}^{\Sigma^{\ell}_{k}}_{d+1}(X)$$
be the subset which consists of all pairs $(W_{1}, W_{2})$ that are disjoint. 
There is a map,
\begin{equation} \label{eq: partial product} 
\mu:
(\mathbf{D}^{\Sigma^{\ell}_{k}}_{d+1}\bar{\times}\mathbf{D}^{\Sigma^{\ell}_{k}}_{d+1})(X)
\longrightarrow \mathbf{D}^{\Sigma^{\ell}_{k}}_{d+1}(X), \quad (W_{1}, W_{2}) \mapsto W_{1}\sqcup W_{2}.
\end{equation}
This map yields a partially defined product on $\mathbf{D}^{\Sigma^{\ell}_{k}}_{d+1}$ which is clearly associative, commutative, and the identity element is given by the empty set. 
Furthermore, it follows from a general position argument that the inclusion map 
$ \mathbf{D}^{\Sigma^{\ell}_{k}}_{d+1}\bar{\times}\mathbf{D}^{\Sigma^{\ell}_{k}}_{d+1} \longrightarrow \mathbf{D}^{\Sigma^{\ell}_{k}}_{d+1}\times\mathbf{D}^{\Sigma^{\ell}_{k}}_{d+1}$
is a weak equivalence of sheaves. 
From this weak equivalence, the representing space $|\mathbf{D}^{\Sigma^{\ell}_{k}}_{d+1}|$ inherits a monoid structure defined by
$$\xymatrix{
|\mathbf{D}^{\Sigma^{\ell}_{k}}_{d+1}|\times|\mathbf{D}^{\Sigma^{\ell}_{k}}_{d+1}| \ar[r]^{\simeq} & |\mathbf{D}^{\Sigma^{\ell}_{k}}_{d+1}\bar{\times}\mathbf{D}^{\Sigma^{\ell}_{k}}_{d+1}| \ar[rr]^{\mu} && |\mathbf{D}^{\Sigma^{\ell}_{k}}_{d+1}|,
}$$
which is associative and commutative up to homotopy and has strict identity. 
It follows that the natural transformation $\beta_{\ell}: \mathbf{D}^{\Sig{k}{\ell}}_{d} \longrightarrow \mathbf{D}^{\Sig{k-1}{\ell-1}}_{d-p_{\ell}-1}$ respects this product and thus the induced map $|\beta_{\ell}|: |\mathbf{D}^{\Sig{k}{\ell}}_{d}| \longrightarrow |\mathbf{D}^{\Sig{k-1}{\ell-1}}_{d-p_{\ell}-1}|$ is a homomorphism of homotopy monoids. 

We define a monoid homomorphism $\alpha: \pi_{0}(\mathbf{D}^{\Sigma^{\ell}_{k}}_{d}) \longrightarrow \Omega^{\Sigma^{k}_{\ell}}_{d-1}$ in a way similar to as in \cite[Lemma 3.2]{H 14}. 
For any $\Sigma^{\ell}_{k}$-submanifold $W \subset \R\times\R^{k}_{+}\times\R^{\infty}\times\R^{\bar{p}+\bar{m}}$, we may perturb $W$ through a small ambient isotopy (through $\Sigma^{k}_{\ell}$-submanifolds) to a new $\Sigma^{\ell}_{k}$ submanifold $W'$, such that the restriction map $f: W' \longrightarrow \R$ is $\Sigma^{\ell}_{k}$-transverse (see Definition \ref{defn: transversality}) to $0 \in \R$, where $f$ is the restriction to $W'$ of the projection 
$\R\times\R^{k}_{+}\times\R^{\infty}\times\R^{\bar{p}+\bar{m}} \rightarrow \R.$
We then define $\alpha(W)$ to be the $\Sigma^{\ell}_{k}$-bordism class in $\Omega^{\Sigma^{\ell}_{k}}_{d-1}.$ 
It is immediate that this map is a homomorphism. 
The inverse to $\alpha$ is defined by sending a bordism class $[M] \in \Omega^{\Sigma^{\ell}_{k}}_{d-1}$ to the concordance class represented by the element 
$\R\times \varphi(M) \; \subset \; \R\times(\R^{k}_{+}\times\R^{\infty}\times\R^{\bar{p}+\bar{m}}),$
where $\varphi: M \longrightarrow \R^{k}_{+}\times\R^{\infty}\times\R^{\bar{p}+\bar{m}}$ is some element of the embedding space $\mathcal{E}_{\Sigma^{\ell}_{k}}(M)$. 
It is an easy exercise to see that this yields a well-defined map (see \cite[Lemma 3.2]{H 14} for details). 
Since $\Omega^{\Sigma^{\ell}_{k}}_{d-1}$ is a group, it follows from that fact that $\alpha$ is a bijective homomrphism that $\pi_{0}(\mathbf{D}^{\Sigma^{\ell}_{k}}_{d})$ is a group as well. 
This completes the proof of the lemma. 
\end{proof}

The next corollary follows from the fact that the map $|\beta_{\ell}|: |\mathbf{D}^{\Sig{k}{\ell}}_{d}| \longrightarrow |\mathbf{D}^{\Sig{k-1}{\ell-1}}_{d-p_{\ell}-1}|$ is a homomorphism of \textit{group-like monoids}.
\begin{corollary} \label{corollary: equivalent fibres}
All non-empty fibres of the map $|\beta_{\ell}|: |\mathbf{D}^{\Sig{k}{\ell}}_{d}| \longrightarrow |\mathbf{D}^{\Sig{k-1}{\ell-1}}_{d-p_{\ell}-1}|$ are homotopy equivalent. 
\end{corollary}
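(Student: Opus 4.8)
The plan is to reduce the statement to a formal fact about grouplike topological monoids. By the preceding lemma the map
$f := |\beta_\ell| \colon |\mathbf{D}^{\Sig{k}{\ell}}_{d}| \longrightarrow |\mathbf{D}^{\Sig{k-1}{\ell-1}}_{d-p_\ell-1}|$
is a homomorphism of topological monoids that are associative, unital and commutative up to homotopy and grouplike (their $\pi_0$'s being the bordism groups $\Omega^{\Sig{k}{\ell}}_{d-1}$ and $\Omega^{\Sig{k-1}{\ell-1}}_{d-p_\ell-2}$, which are abelian groups). It therefore suffices to prove the general fact: \emph{if $f\colon G \to H$ is a homomorphism of such monoids, then all of its non-empty homotopy fibres are homotopy equivalent.} I would also note at the outset that the fibres in the statement may be taken to be homotopy fibres: $\beta_\ell$ is obtained by iterated pullback from the face maps $\partial_j$, each of which has the concordance lifting property by the argument of Lemma \ref{lemma: bockstein k-l concordance}, so $\beta_\ell$ has the concordance lifting property by Proposition \ref{prop: concordance lifting property}, and hence by Corollary \ref{concordance corollary} the strict fibres of $|\beta_\ell|$ agree with its homotopy fibres.

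To prove the general fact, write $F_h := \hofibre_h(f)$ and let $h_0, h_1 \in H$ be points with $F_0 := F_{h_0}$ and $F_1 := F_{h_1}$ non-empty. Step one is the change-of-basepoint equivalence, valid for any map: if $h$ and $h'$ lie in the same path component of $H$, then $F_h \simeq F_{h'}$ via concatenation with a path from $h$ to $h'$. Step two crosses between path components by translation. Since $F_0$ and $F_1$ are non-empty, the classes $[h_0], [h_1] \in \pi_0(H)$ lie in the image of the group homomorphism $\pi_0(f)\colon \pi_0(G) \to \pi_0(H)$; this image is a subgroup, so there is $c \in G$ with $[f(c)] = [h_1][h_0]^{-1}$. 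Left translation $T_c\colon G \to G$, $g \mapsto c\cdot g$, and $T_{f(c)}\colon H \to H$, $h \mapsto f(c)\cdot h$, are homotopy equivalences because $G$ and $H$ are grouplike, and $f\circ T_c \simeq T_{f(c)}\circ f$ because $f$ is a homomorphism; this exhibits $T_c$ as a homotopy-commuting map of cospans $\big(G\xrightarrow{f}H\xleftarrow{h_0}\{\ast\}\big) \to \big(G\xrightarrow{f}H\xleftarrow{f(c)h_0}\{\ast\}\big)$ whose legs over $G$ and over $H$ are weak equivalences, and hence induces a weak equivalence $F_0 \simeq \hofibre_{f(c)h_0}(f)$ on homotopy pullbacks. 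Finally $[f(c)h_0] = [f(c)][h_0] = [h_1]$, so $\hofibre_{f(c)h_0}(f) \simeq F_1$ by step one; composing gives $F_0 \simeq F_1$.

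The step requiring genuine care — and essentially the only non-formal point — is the translation step: one uses grouplikeness precisely to know that $\pi_0(H)$ is a group, so that non-emptiness of both fibres forces $[h_1][h_0]^{-1}$ into the image of $\pi_0(f)$, and one needs the fact that a homotopy-commutative square two of whose three legs are weak equivalences induces a weak equivalence on homotopy fibres, i.e. the invariance of homotopy pullbacks under weak equivalence (or, concretely, a five-lemma comparison of the two long exact sequences once the relevant sets of components have been matched up). Beyond what the preceding lemma supplies, no further input from the differential topology of $\Sig{k}{\ell}$-manifolds is needed.
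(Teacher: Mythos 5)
Your argument is correct and is exactly the route the paper takes: the paper's entire justification for this corollary is the one-line assertion that $|\beta_{\ell}|$ is a homomorphism of group-like (homotopy-associative, homotopy-commutative) monoids, and your proof simply spells out the standard change-of-basepoint-plus-translation argument underlying that assertion, together with the (correct) observation that the concordance lifting property lets one identify strict and homotopy fibres.
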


The element $\emptyset \in \mathbf{D}^{\Sig{k-1}{\ell-1}}_{d-1}(\; \star \;)$, given by the empty set, determines an element
$\emptyset \in \mathbf{D}^{\Sig{k-1}{\ell-1}}_{d-1}(X)$ for all $X \in \Ob(\mathcal{X})$. 
Recall from Definition \ref{defn: fibre sheaf} the fibre-sheaf defined by,
$$\xymatrix{
\Fib_{\beta_{\ell}}^{\emptyset}(X) \; = \; \{ W \in \mathbf{D}^{\Sig{k}{\ell}}_{d}(X) \; | \; \beta_{\ell}(W) = \emptyset \; \},
}$$
for $X \in \Ob(\mathcal{X})$. 

\begin{proposition} The fibre sheaf $\Fib_{\beta_{\ell}}^{\emptyset}$ is isomorphic to $\mathbf{D}^{\Sig{k-1}{\ell-1}}_{d}$. \end{proposition}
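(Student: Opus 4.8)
The plan is to produce an explicit natural isomorphism of set-valued sheaves $\Phi\colon \Fib_{\beta_\ell}^{\emptyset}\xrightarrow{\cong}\mathbf{D}^{\Sig{k-1}{\ell-1}}_{d}$ that simply forgets the (empty) $\ell$-th face, together with its inverse $\Psi$ which re-instates it. The first step is to unwind the fibre condition. For $X\in\Ob(\mathcal X)$, an element of $\Fib_{\beta_\ell}^{\emptyset}(X)$ is a $\Sig{k}{\ell}$-submanifold $W\subset X\times\R\times\R^k_+\times\R^\infty\times\R^{\bar p+\bar m}$ satisfying conditions i--iii of Definition \ref{defn: D-sheaf} with $\beta_\ell W=\emptyset$. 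Since condition iii of Definition \ref{defn: i-P maps} gives a factorization $\partial_\ell W=\beta_\ell W\times\phi_\ell(P_\ell)$, such a $W$ is precisely a $\Sig{k}{\ell}$-submanifold all of whose structure indexed by a subset containing $\ell$ --- the face $\partial_\ell W$, its collar, its structure map, and the manifold $\beta_\ell W$ --- is empty.

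Next I would set up the ambient identification. Because the embedding of a $\Sig{k}{\ell}$-submanifold is neat in the sense of condition ii of Definition \ref{defn: i-P maps}, one has $\partial_\ell W=W\cap\{x_\ell=0\}$, so $\partial_\ell W=\emptyset$ forces $W$ to lie in the open region $\{x_\ell>0\}$; hence $W\subset X\times\R\times(\R^{k-1}_+\times(0,\infty))\times\R^\infty\times\R^{\bar p+\bar m}$, with $\R^{k-1}_+$ indexed by $\langle k\rangle\setminus\{\ell\}$. Fixing once and for all (independently of $X$) a diffeomorphism $(0,\infty)\cong\R$, the order-preserving relabelling $\langle k\rangle\setminus\{\ell\}\cong\langle k-1\rangle$, the splitting $\R^{\bar p+\bar m}=\R^{\bar p'+\bar m'}\times\R^{p_k+m_k}$ (where $\bar p',\bar m'$ are the sums over $\langle k-1\rangle$), and an identification $\R\times\R^\infty\times\R^{p_k+m_k}\cong\R^\infty$ compatible with the standard exhaustions, one gets a homeomorphism between the ambient space of $\mathbf{D}^{\Sig{k}{\ell}}_{d}$ cut down to $\{x_\ell>0\}$ and the ambient space of $\mathbf{D}^{\Sig{k-1}{\ell-1}}_{d}$. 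Under this identification, $\Phi$ sends $W$ to the same underlying submanifold, re-indexed and equipped with the face decomposition $\partial W=\partial_0W\cup\bigcup_{i\in\langle k\rangle\setminus\{\ell\}}\partial_iW$ and the structure maps $\psi_I$ ($I\subseteq\langle\ell-1\rangle$) inherited from $W$; note the product embeddings $\phi_I$ for $I\subseteq\langle\ell-1\rangle$ involve only $\phi_1,\dots,\phi_{\ell-1}$ and so are unchanged. The inverse $\Psi$ takes a $\Sig{k-1}{\ell-1}$-submanifold $V$ to the same submanifold viewed inside the larger ambient space, with its new $\ell$-th face and its $\beta_\ell$-manifold declared empty.

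Then I would check that the two maps are well defined, mutually inverse, and natural. The dimension count is immediate: $\Phi$ and $\Psi$ do not alter the underlying manifold, so the index $d$ is preserved (dimension $d+\dim X$ throughout), which is what the statement asserts. The collar condition, the factorizations through the $\beta$-manifolds, and the structure-map compatibilities are inherited verbatim for index sets not containing $\ell$, and are vacuous for index sets containing $\ell$ since those faces are empty. The conditions i--iii of Definition \ref{defn: D-sheaf} --- that $\pi$ is a $\Sigma$-submersion, $(\pi,f)$ is proper, and $\pi^{-1}(K)$ lands in a finite-dimensional slice over each compact $K$ --- only refer to the projections to $X$ and to $X\times\R$, which are untouched by the reshuffling of the remaining Euclidean coordinates, so they transfer across $\Phi$ and $\Psi$. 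That $\Phi$ and $\Psi$ are mutually inverse is clear on underlying submanifolds, and naturality in $X$ holds because the chosen identifications are independent of $X$ and pulling back submanifolds along a smooth map $g\colon X\to Y$ commutes with forgetting or adding the $\ell$-th (empty) face.

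The step I expect to be the main obstacle is the ambient identification in the second paragraph: making precise that $\partial_\ell W=\emptyset$ really forces $W$ to be disjoint from the coordinate hyperplane $\{x_\ell=0\}$ --- so that the $\ell$-th half-line coordinate can legitimately be traded for a full real coordinate and absorbed into $\R^\infty$ --- and verifying that the re-embedded $W$ still satisfies the collar compatibility ii of Definition \ref{defn: i-P maps} along all of the remaining walls. Once this neatness point is pinned down, the rest is routine bookkeeping.
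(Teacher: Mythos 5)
Your proof is correct and follows essentially the same route as the paper's: the paper likewise observes that $\beta_{\ell}W=\emptyset$ forces $\partial_{\ell}W=\emptyset$, so that $W$ misses the $\ell$-th wall entirely, and then declares the identification with $\mathbf{D}^{\Sig{k-1}{\ell-1}}_{d}$ immediate from the definitions. You have simply made explicit the ambient re-indexing (trading the $\ell$-th half-line coordinate and the $\R^{p_{\ell}+m_{\ell}}$ factor into $\R^{\infty}$) and the neatness point that the paper leaves implicit.
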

\begin{proof}
For  $X \in \Ob(\mathcal{X})$, an element of $\Fib_{\beta_{\ell}}^{\emptyset}(X)$ is given by a $\Sig{k}{\ell}$-submanifold
$$W \; \subset \; X\times\R\times\R^{k}_{+}\times\R^{\infty}\times\R^{\bar{p}+\bar{m}}$$
satisfying all conditions of the definition of $\mathbf{D}^{\Sig{k}{\ell}}_{d}(X)$, with the added property that 
$$W \cap (X\times\R\times\R^{k}_{+, \{k\}^{c}}\times\R^{\infty}\times\R^{\bar{p}+\bar{m}}) \; = \; \partial_{\ell}W \; = \; \emptyset.$$
The proof of the proposition follows immediately from this observation and the definition of a $\Sigma^{\ell-1}_{k-1}$-manifold. 
\end{proof}

\section{A Cubical Diagram of Thom-Spectra} \label{section: k spaces}
In this section we construct the spectrum which appears in the statement of Theorem \ref{thm: weak homotopy equivalence}. 
We must first cover some preliminaries on cubical diagrams of spaces and spectra. 
\subsection{$k$-Cubic Spaces} \label{section: k- Spaces}
 As in previous sections, let $\langle k \rangle$ denote the set $\{1,
 \cdots, k\}$. 
 We denote by $2^{\langle k \rangle}$
 the category with objects given by the subsets of $\langle k \rangle$, and with morphisms given by the inclusion maps. 
  We call a functor from
 $(2^{\langle k \rangle})^{\text{op}}$ to $\mathbf{Top}$ (the category of
 topological spaces) a \textit{$k$-cubic space}. 
 Such functors will usually be denoted by 
 $$X_{\bullet}: (2^{\langle k \rangle})^{\text{op}} \longrightarrow \Top, \quad J \mapsto X_{J}.$$
 In order to define a $k$-cubic space, one needs to associate to each subset $J \subseteq \langle k \rangle$ a space $X_{J}$ and to pairs of subsets $I \subseteq J \subseteq \langle k \rangle$ maps 
$f_{J, I}: X_{J} \longrightarrow X_{I}$,
such that for any triple of subsets $K \subseteq I \subseteq J \subseteq \langle k \rangle$, the equation $f_{J, K} = f_{I, K}\circ f_{J, I}$ holds, and $f_{I, I} = Id_{X_{I}}$. 
We will sometimes refer to the spaces $X_{I}$ as \textit{vertices} and the maps $f_{J, I}: X_{J} \longrightarrow X_{I}$ as \textit{edges}.

 A \textit{morphism} of $k$-cubic spaces $F_{\bullet}: X_{\bullet} \longrightarrow Y_{\bullet}$ is defined to be a natural transformation of the functors $X_{\bullet}$ and $Y_{\bullet}$. 
 We denote the space of all such $k$-cubic space maps by 
$\Maps_{\kset{k}}(X_{\bullet}, \; Y_{\bullet}).$
This space is topologized naturally as a subset of the product
$\prod_{J\subseteq \kset{k}}\Maps(X_{J}, Y_{J})$,
where $\Maps(X_{J}, Y_{J})$ is the space of continuous maps from $X_{J}$ to $Y_{J}$, topologized in the \textit{compact-open} topology.

A $\Sigma_{k}$-manifold $W$ determines a $k$-cubic space by the correspondence $I \mapsto \partial_{I}W$.
Also notice that the spaces $\R^{k}_{+}$ and $\R^{\bar{p} + \bar{m}}$ from the previous sections determine $k$-cubic spaces via the correspondences,
$$\begin{aligned}
I \; \mapsto \; \R^{k}_{+, I} \; &= \; \{(t_{1}, \dots, t_{k}) \; | \; t_{i} = 0 \; \; \text{if $i \notin I$} \},\\
I \; \mapsto \; \R^{\bar{p} + \bar{m}}_{I} \; &= \; \{(x_{1}, \dots , x_{k}) \in \R^{p_{1} + m_{1}}\times\cdots \times\R^{p_{k}+m_{k}} \; | \; x_{i} = 0 \; \; \text{if $i \notin I$} \}.
\end{aligned}$$
In addition to $k$-cubic spaces we will also have to consider $k$-cubic spectra. 
A $k$-cubic spectrum is a functor 
$\mathsf{X}_{\bullet}: (2^{\kset{k}})^{\text{op}} \longrightarrow \Spec$
where $\Spec$ is the category of spectra. 
It is required that for each pair of subsets $I \subseteq J \subseteq \kset{k}$, the associated map
$\mathsf{X}_{J} \longrightarrow \mathsf{X}_{I}$
is a \textit{strict} map of spectra of degree $0$. 

Let $\mathsf{X}_{\bullet}$ be a $k$-cubic spectrum. 
Then for each integer $n$, there is a $k$-cubic space $(\mathsf{X}_{\bullet})_{n}$ defined by sending each $J \subseteq \kset{k}$ to the $n$th space of the spectrum $\mathsf{X}_{J}$. 
The operations of suspending $\mathsf{X}_{\bullet} \mapsto \Sigma\mathsf{X}_{\bullet}$, and de-suspending $\mathsf{X}_{\bullet} \mapsto \Sigma^{-1}\mathsf{X}_{\bullet}$ are defined in the obvious way.

\subsection{The total homotopy cofibre of a $k$-cubic space} \label{subsection: total cofibres}
We define an important type of homotopy colimit associated to a $k$-cubic space called the \textit{total homotopy cofibre}. 
We first introduce some new notation. 
For $j = 1, \dots, k$, consider the function
$$
\sigma_{j}: \langle k-1 \rangle \longrightarrow \langle k \rangle, \quad \quad
\sigma(i) = 
\begin{cases}
i &\quad \text{if $i < j$,}\\
i + 1 &\quad \text{if $i \geq j$.}
\end{cases}
$$
We then define functors 
\begin{equation}
\begin{aligned}
\partial_{j}: 2^{\kset{k-1}} \longrightarrow 2^{\kset{k}}, & \quad \{i_{1}, \dots, i_{n}\} \mapsto \{\sigma_{j}(i_{1}), \dots, \sigma_{j}(i_{n})\}\cup\{j\},\\
\bar{\partial}_{j}: 2^{\kset{k-1}} \longrightarrow 2^{\kset{k}}, & \quad \{i_{1}, \dots, i_{n}\} \mapsto \{\sigma_{j}(i_{1}).\dots, \sigma_{j}(i_{n})\}.
\end{aligned}
\end{equation}
For any $k$-cubic space $X_{\bullet}: (2^{\kset{k}})^{\text{op}} \; \longrightarrow \; \Top$, 
we define $X_{\bullet, \; j}$ and $X_{\bullet, \; \bar{j}}$ to be the $(k-1)$-cubic spaces obtained from $X_{\bullet}$ by precomposing $X_{\bullet}$ with the functors $\partial_{j}$ and $\bar{\partial}_{j}$ respectively. 
There is a natural map of $(k-1)$-cubic spaces
\begin{equation} \label{eq: k-1 space map} X_{\bullet, j} \; \longrightarrow X_{\bullet, \bar{j}}, \end{equation} 
induced by the maps $X_{I\cup\{j\}} \longrightarrow X_{I}$, for subsets $I \subseteq \langle k \rangle\setminus\{j\}$. 

\begin{defn} \label{defn: total homotopy cofibre}
 Let $X_{\bullet}$ be a $k$-cubic space. We define the $\textit{Total Homotopy Cofibre}$ of $X_{\bullet}$, which we denote by 
$\tCofibre_{\kset{k}}X_{\bullet},$
inductively on $k$ as follows. 
If $k = 1$, we define
$\tCofibre_{\kset{1}}X_{\bullet}$ to be the homotopy cofibre (or mapping-cone) of the natural map $X_{\{1\}} \longrightarrow X_{\emptyset}$. 
Assume now that the total homotopy cofibre is defined for all $(k-1)$-cubic spaces. 
The map from (\ref{eq: k-1 space map}) induces a map, 
$$\xymatrix{
\tCofibre_{\kset{k-1}} X_{\bullet, k} \longrightarrow \tCofibre_{\kset{k-1}} X_{\bullet, \bar{k}}.
}$$
Using this we define
\begin{equation} \label{eq: inductive t cofibre} 
\xymatrix{
\tCofibre X_{\bullet} \; := \; \Cofibre\bigg(\tCofibre_{\kset{k-1}} X_{\bullet, k} \longrightarrow \tCofibre_{\kset{k-1}} X_{\bullet, \bar{k}}\bigg). 
}
\end{equation}
\end{defn}
The total homotopy cofibre of a $\langle k \rangle$-cubic spectrum is defined similarly. 

\subsection{Loop Spaces} \label{subsection: Loop Spaces} In this section we describe a useful mapping space associated to a $\langle k \rangle$-cubic space. 
 \begin{defn} For $n \geq 0$, let $D^{n, \langle k \rangle}_{\bullet}$ denote the $k$-cubic space defined by setting
 $$D^{n, \langle k \rangle}_{J} \; := \; (\R^{k}_{+, J^{c}}\times\R^{n})^{c},$$
 where the super-script $c$ signifies the \textit{one-point compactification}. The maps $D^{n, \langle k \rangle}_{J} \rightarrow D^{n, \langle k \rangle}_{I}$ for pairs of subsets $I \subseteq J \subseteq \langle k\rangle$ are given by inclusion. \end{defn}
For a $k$-cubic space $X_{\bullet}$ we define, 
 \begin{equation}
 \xymatrix{
   \Omega^{n+k}_{\langle k \rangle}X_{\bullet} \; := \; \Maps_{\langle k \rangle}(D^{n, \langle k \rangle}_{\bullet}, \; X_{\bullet}).
   } \end{equation}
Let $\mathsf{X}_{\bullet}$ be a $k$-cubic
 spectrum. 
 For integers $m$ and $n$ there are maps
 $$\Omega^{n+k}_{\langle k \rangle}(\mathsf{X}_{\bullet})_{n+k+m} \;
 \longrightarrow \; \Omega^{n+k+1}_{\langle k
   \rangle}(\mathsf{X}_{\bullet})_{n+k+m+1}$$ induced by the structure maps of the spectrum. 
 Using these maps we define,
 \begin{equation} \Omega^{\infty - m}_{\langle k \rangle}\mathsf{X}_{\bullet} \; := \; \colim_{n\to\infty}\Omega^{n+k}_{\langle k \rangle}(\mathsf{X}_{\bullet})_{n+k+m}. \end{equation}
For each pair of integers $n$ and $m$, the natural maps
$$\xymatrix{
(\mathsf{X}_{J})_{n+k+m} \; \longrightarrow \; (\tCofibre_{\kset{k}}\mathsf{X}_{\bullet})_{n+k+m}}$$ 
induce a map
\begin{equation} \label{eq: limit natural map}
\xymatrix{ \Omega^{\infty-m}_{\langle k \rangle}\mathsf{X}_{\bullet} \; \longrightarrow \; \Omega^{\infty-m}\tCofibre_{\kset{k}}\mathsf{X}_{\bullet}}, 
\end{equation}
where the space on the right is the infinite loopspace associated to the spectrum $\Sigma^{-m}\tCofibre_{\kset{k}}\mathsf{X}_{\bullet}$. 
The following result is essentially the same as \cite[Proposition 3.15]{G 08} (see also \cite[Lemmas 3.1.2 and 3.1.4]{L 00}).
\begin{theorem} \label{thm: loopspace eq 1} 
For all $m$, the map from {\rm (\ref{eq: limit natural map})} yields a
homotopy equivalence,
 $$\xymatrix{
 \Omega^{\infty-m}_{\langle k \rangle}\mathsf{X}_{\bullet} \; \simeq \; \Omega^{\infty-m}\tCofibre_{\kset{k}}\mathsf{X}_{\bullet}.
 }$$
 \end{theorem}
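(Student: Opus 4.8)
The plan is to prove Theorem \ref{thm: loopspace eq 1} by induction on $k$, matching the inductive definition of $\tCofibre_{\langle k \rangle}$ from Definition \ref{defn: total homotopy cofibre} with a parallel inductive decomposition of the cubical loop-space functor $\Omega^{\ast}_{\langle k \rangle}$; throughout, $\Maps$ denotes the space of \emph{based} maps (the relevant source and target cubes are all levelwise based). For $k=0$ the statement is trivial, with the convention $\tCofibre_{\langle 0 \rangle}\mathsf X_\bullet=\mathsf X_\emptyset$: one has $D^{n,\langle 0 \rangle}_\bullet=S^n$, so both sides equal $\Omega^{\infty-m}\mathsf X_\emptyset$. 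For the inductive step, fix $k\geq 1$. Recall from Section \ref{subsection: total cofibres} that a $k$-cubic spectrum $\mathsf X_\bullet$ is the same data as a strict map of $(k-1)$-cubic spectra $r\colon \mathsf X_{\bullet,k}\to\mathsf X_{\bullet,\bar k}$, the faces containing (respectively omitting) the index $k$, and that by definition
\[ \tCofibre_{\langle k \rangle}\mathsf X_\bullet \; = \; \Cofibre\bigl(\tCofibre_{\langle k-1 \rangle}\mathsf X_{\bullet,k}\longrightarrow\tCofibre_{\langle k-1 \rangle}\mathsf X_{\bullet,\bar k}\bigr). \]

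The first step of the inductive argument is to unwind $D^{n,\langle k \rangle}_\bullet$ along the same decomposition. Using $(A\times B)^c=A^c\wedge B^c$, for $I\subseteq\langle k-1 \rangle$ one gets $D^{n,\langle k \rangle}_{I\cup\{k\}}=(\R^{k-1}_{+,I^c})^c\wedge S^n=D^{n,\langle k-1 \rangle}_I$, so the restriction of $D^{n,\langle k \rangle}_\bullet$ to the faces containing $k$ is exactly $D^{n,\langle k-1 \rangle}_\bullet$; while $D^{n,\langle k \rangle}_I=(\R^{k-1}_{+,I^c})^c\wedge\bigl([0,\infty)\times\R^n\bigr)^c=:E_I$ defines the restriction $E_\bullet$ to the faces omitting $k$, whose edge maps all have the form $\bigl(\text{edge of }(\R^{k-1}_{+,\bullet^c})^c\bigr)\wedge\mathrm{id}$. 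The key point is that $\bigl([0,\infty)\times\R^n\bigr)^c$ is a based space that contracts onto its basepoint; smashing this contraction with $\mathrm{id}$ over all $I$ produces a homotopy $\mathrm{id}_{E_\bullet}\simeq\mathrm{const}$ through maps of $(k-1)$-cubic spaces, so that $\Maps_{\langle k-1 \rangle}(E_\bullet,Z_\bullet)$ is contractible for every $(k-1)$-cubic space $Z_\bullet$. Moreover, the edges of $D^{n,\langle k \rangle}_\bullet$ running from the $k$-containing to the $k$-omitting faces assemble into the inclusion $\iota\colon D^{n,\langle k-1 \rangle}_\bullet=(\R^{k-1}_{+,\bullet^c})^c\wedge S^n\hookrightarrow(\R^{k-1}_{+,\bullet^c})^c\wedge\bigl([0,\infty)\times\R^n\bigr)^c=E_\bullet$, which is built from the boundary inclusion $S^n=\bigl(\{0\}\times\R^n\bigr)^c\hookrightarrow\bigl([0,\infty)\times\R^n\bigr)^c$ and is therefore a cofibration in the relevant sense.

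Writing $\ell=n+k+m$, a map of $k$-cubic spaces $D^{n,\langle k \rangle}_\bullet\to(\mathsf X_\bullet)_\ell$ is precisely the data of a map $D^{n,\langle k-1 \rangle}_\bullet\to(\mathsf X_{\bullet,k})_\ell$, a map $E_\bullet\to(\mathsf X_{\bullet,\bar k})_\ell$, and an identification of the two induced maps $D^{n,\langle k-1 \rangle}_\bullet\to(\mathsf X_{\bullet,\bar k})_\ell$ (one obtained by composing with $r$, the other by precomposing with $\iota$); equivalently, $\Omega^{n+k}_{\langle k \rangle}(\mathsf X_\bullet)_\ell$ is the pullback of
\[ \Maps_{\langle k-1 \rangle}\bigl(D^{n,\langle k-1 \rangle}_\bullet,(\mathsf X_{\bullet,k})_\ell\bigr)\xrightarrow{\;r_\ast\;}\Maps_{\langle k-1 \rangle}\bigl(D^{n,\langle k-1 \rangle}_\bullet,(\mathsf X_{\bullet,\bar k})_\ell\bigr)\xleftarrow{\;\iota^\ast\;}\Maps_{\langle k-1 \rangle}\bigl(E_\bullet,(\mathsf X_{\bullet,\bar k})_\ell\bigr). \]
Since the right-hand term is contractible and $\iota^\ast$ is a Serre fibration (restriction along the cofibration $\iota$), this pullback computes $\hofibre(r_\ast)$, hence $\Omega^{n+k}_{\langle k \rangle}(\mathsf X_\bullet)_\ell\simeq\hofibre\bigl(\Omega^{n+k-1}_{\langle k-1 \rangle}(\mathsf X_{\bullet,k})_\ell\to\Omega^{n+k-1}_{\langle k-1 \rangle}(\mathsf X_{\bullet,\bar k})_\ell\bigr)$ by the definition of $\Omega^{\ast}_{\langle k-1 \rangle}$. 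Passing to the colimit over $n$ (filtered colimits commute with $\hofibre$) and observing that $\ell-(n+k-1)=m+1$, this identifies $\Omega^{\infty-m}_{\langle k \rangle}\mathsf X_\bullet$ with $\hofibre\bigl(\Omega^{\infty-(m+1)}_{\langle k-1 \rangle}\mathsf X_{\bullet,k}\to\Omega^{\infty-(m+1)}_{\langle k-1 \rangle}\mathsf X_{\bullet,\bar k}\bigr)$. Applying the inductive hypothesis in dimension $k-1$ with shift $m+1$, the two terms become $\Omega^{\infty-(m+1)}$ of the spectra $\tCofibre_{\langle k-1 \rangle}\mathsf X_{\bullet,k}$ and $\tCofibre_{\langle k-1 \rangle}\mathsf X_{\bullet,\bar k}$, the map between them being induced by $r$; and since $\Omega^{\infty-j}$ of a map of spectra has homotopy fibre $\Omega^{\infty-j}$ of the fibre spectrum, while for spectra the fibre of a map is $\Sigma^{-1}$ of its cofibre, one obtains
\[ \Omega^{\infty-m}_{\langle k \rangle}\mathsf X_\bullet\;\simeq\;\Omega^{\infty-(m+1)}\Sigma^{-1}\Cofibre\bigl(\tCofibre_{\langle k-1 \rangle}\mathsf X_{\bullet,k}\to\tCofibre_{\langle k-1 \rangle}\mathsf X_{\bullet,\bar k}\bigr)\;=\;\Omega^{\infty-m}\tCofibre_{\langle k \rangle}\mathsf X_\bullet. \]
Checking that these identifications are compatible with the maps $(\mathsf X_J)_\ell\to(\tCofibre_{\langle k \rangle}\mathsf X_\bullet)_\ell$ shows that the resulting equivalence is the one induced by (\ref{eq: limit natural map}), which completes the induction.

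The main obstacle, which is exactly the content of \cite[Proposition 3.15]{G 08} and \cite[Lemmas 3.1.2 and 3.1.4]{L 00} in the manifolds-with-corners case, is the analysis of $D^{n,\langle k \rangle}_\bullet$ in the inductive step: recognizing that its two $(k-1)$-dimensional faces are $D^{n,\langle k-1 \rangle}_\bullet$ (the $k$-containing face) and the cubically contractible cube $E_\bullet$ (the $k$-omitting face), verifying that the connecting edge $\iota$ is a cofibration so that $\iota^\ast$ is a fibration, and thereby promoting the defining pullback of $\Omega^{n+k}_{\langle k \rangle}$ to an honest homotopy fibre sequence. Once this is in place, the remainder is bookkeeping with loop-degrees against spectrum levels together with the standard fibre/cofibre duality for spectra.
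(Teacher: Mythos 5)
Your argument is correct and follows essentially the same route as the paper: induction on $k$, comparing the cubical mapping space with the total cofibre through a fibre sequence in each variable, with the base case $D^{n,\langle 0\rangle}=S^n$. The only (immaterial) presentational difference is that you exhibit $\Omega^{\infty-m}_{\langle k\rangle}\mathsf X_\bullet$ as the homotopy fibre of the map of $(k-1)$-cubic loop spaces in degree $m+1$, whereas the paper writes the equivalent once-delooped fibre sequence with $\Omega^{\infty-m}_{\langle k\rangle}\mathsf X_\bullet$ as base; your analysis of the cube $D^{n,\langle k\rangle}_\bullet$ (the $k$-containing face being $D^{n,\langle k-1\rangle}_\bullet$, the $k$-omitting face being cubically contractible, and the connecting edge a cofibration) supplies exactly the details the paper defers to Genauer and Laures.
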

 \begin{proof}[Proof sketch]
 This is proven by induction on $k$. 
Let $k = 0$. We have $D^{n, \langle 0 \rangle} \cong S^{n}$ for all $n \in \N$. 
It follows then from Definition \ref{defn: total homotopy cofibre} that if $\mathsf{X}_{\bullet}$ is a $0$-cubic spectrum (which is just a spectrum), there is a homeomorphism,
$\Omega^{\infty-m}_{\langle 0 \rangle}\mathsf{X}_{\bullet} \;  \cong \; \Omega^{\infty-m}\mathsf{X}_{\emptyset}.$
This proves the base case. 
Now suppose that the lemma holds for all $(k-1)$-cubic spectra. 
The result then follows from considering the map of fibre sequences
$$\xymatrix{
 \Omega^{\infty-m}_{\langle k-1 \rangle}\mathsf{X}_{\bullet, k} \ar[d] \ar[rr]^{\simeq\ \ \ \ \ \ \ \ } && \Omega^{\infty-m}\tCofibre_{\kset{k-1}}\mathsf{X}_{\bullet, k} \ar[d]\\
  \Omega^{\infty-m}_{\langle k-1 \rangle}\mathsf{X}_{\bullet, \bar{k}} \ar[d] \ar[rr]^{\simeq\ \ \ \ \ \ \ \ } && \Omega^{\infty-m}\tCofibre_{\kset{k-1}}\mathsf{X}_{\bullet, \bar{k}} \ar[d]\\
   \Omega^{\infty-m}_{\langle k \rangle}\mathsf{X}_{\bullet} \ar[rr] && \Omega^{\infty-m}\tCofibre_{\kset{k}}\mathsf{X}_{\bullet},\\
}$$
 where the top two horizontal maps are weak homotopy equivalences by the induction hypothesis. 
 \end{proof}

\subsection{A $k$-cubic Thom-spectrum} \label{subsection: k-cube of thom spectra}
We now construct the $k$-cubic spectrum 
$\MT_{\Sigma^{\ell}_{k}}(d)_{\bullet}$ from the statement of Theorem \ref{thm: weak homotopy equivalence} in the introduction.
This $k$-cubic spectrum will be defined in such a way so that for each subset $J \subseteq \langle k \rangle$, 
$$\MT_{\Sigma^{\ell}_{k}}(d)_{J} = \Sigma^{-|J|}\MT(d-p_{J}-|J|),$$
where $\MT(d-p_{J}-|J|)$ is the spectrum defined in \cite{GMTW 09} and is homotopy equivalent to the Thom-spectrum associated to the formal inverse of the universal vector bundle 
$$U_{d-p_{J}-|J|} \longrightarrow BO(d-p_{J}-|J|).$$ 
We must first fix some notation.
\begin{Notation} The following are some national conventions that will be used in this section and the next. 
\begin{itemize} \itemsep.3cm
\item
For each manifold $P_{i}$ in the sequence $\Sigma$, let 
$N_{P_{i}} \longrightarrow P_{i}$
denote the normal bundle associated to the embedding
$\phi_{i}: P_{i} \hookrightarrow \R^{p_{i} + m_{i}}$ from Section \ref{eq: P-embeddings}. 
\item
For each subset $I \subseteq \kset{k}$, we let 
$N_{P^{I}} \longrightarrow P^{I}$
denote the normal bundle associated to the product embedding
$\phi_{I}: P^{I} \hookrightarrow
\R^{\bar{p}+\bar{m}}_{I}$. 
\end{itemize}
\end{Notation}
For each subset $I \subseteq \langle k \rangle$, there is a factorization
$N_{P^{I}} \; = \; \prod_{i \in I}N_{P_{i}}.$
We fix once and for all tubular neighborhood embeddings,
\begin{equation} \label{eq: tube nbh embeddings P 1}
e_{P_{i}}: N_{P_{i}} \; \hookrightarrow \; \R^{p_{i} + m_{i}} := \R^{\bar{p} + \bar{m}}_{\{i \}}.
\end{equation}
For each subset $J \subseteq \kset{k}$, the product embeddings,
$e_{P^{J}} := \prod_{j \in J}e_{P_{j}}: N_{P^{J}} \; \hookrightarrow \; \R^{\bar{p} + \bar{m}}_{J}$
give tubular neighborhood embeddings for the normal bundles $N_{P^{J}} \longrightarrow P^{J}$. These tubular neighborhoods induce \textit{collapsing maps} 
\begin{equation} \label{eq: collapse map P 1}
c_{P^{J}}: (\R^{\bar{p} + \bar{m}}_{J})^{c} \; \longrightarrow \; \Th(N_{P^{J}}).
\end{equation} 
We will be using these maps throughout the rest of this paper.

\begin{defn} For $n \in \N$ and a subset $J \subseteq \langle k \rangle$, 
let $\Grass{J}{d}{n}$ denote the Grassmannian manifold consisting of all $(d - |J| - p_{J})$- dimensional vector subspaces 
of 
$$\R\times \R^{k}_{J^{c}}\times\R^{\widehat{n}-1+d}\times\R^{\bar{p}+\bar{m}}_{J^{c}}.$$
If $d - |J| - p_{I} \leq 0$, then $\Grass{J}{d}{n}$ is defined to be a single point. 
\end{defn}

For each subset $J \subseteq \langle k \rangle$, the space $\Grass{J}{d}{n}$ is equipped with the \textit{canonical vector bundle}, which we denote by 
$U_{\Sigma_{k},d, n, J} \longrightarrow \Grass{J}{d}{n}.$
We then denote by 
$$U^{\perp}_{\Sigma_{k}, d, n, J} \longrightarrow \Grass{J}{d}{n},$$
the orthogonal compliment bundle.
Let 
$$i_{J, n}: \Grass{J}{d}{n} \; \longrightarrow \; \Grass{J}{d}{n+1}$$
be the embedding induced by the inclusion map
$$\R\times \R^{k}_{J^{c} }\times\R^{d-1+\widehat{n}}\times\R^{\bar{p}+\bar{m}}_{J^{c}} \; \hookrightarrow \; \R\times \R^{k}_{J^{c}}\times\R^{d-1+\widehat{n}+1}\times\R^{\bar{p}+\bar{m}}_{J^{c}}.$$
This embedding yields the bundle map 
$$\xymatrix{
 \epsilon^{1}\oplus U^{\perp}_{\Sigma_{k}, d, n, J} \ar[rr]^{i_{J, n}^{*}} \ar[d] && U^{\perp}_{\Sigma_{k}, d, n+1, J} \ar[d]\\
\Grass{J}{d}{n} \ar[rr]^{i_{J, n}} && \Grass{J}{d}{n+1},}$$
which in-turn induces a map of \textit{Thom-spaces},
\begin{equation} \label{eq: structure map 1} 
\Th(i^{*}_{J, n}):  S^{1}\wedge\Th(U^{\perp}_{\Sigma_{k}, d, n, J}) \longrightarrow \Th(U^{\perp}_{\Sigma_{k}, d, n+1, J}). 
\end{equation}

\begin{defn} For any subset $J \subseteq \kset{k}$, $\MT_{\Sigma_{k}}(d)_{J}$ is the spectrum defined by setting,
$$(\MT_{J}(d)_{\Sigma_{k}})_{(d+n+k)} \; := \; \Th(U^{\perp}_{\Sigma_{k}, d, n, J})\wedge (\R^{\bar{p} + \bar{m}}_{J})^{c}.$$
The structure maps 
$$\sigma^{n}_{J}: (\MT_{\Sigma_{k}}(d)_{J})_{(d+n+k)} \; \longrightarrow \; (\MT_{\Sigma_{k}}(d)_{J})_{(d+n+k+1)}$$
are given by smashing $\Th(i^{*}_{J, n})$ from (\ref{eq: structure map 1}) with the identity on $(\R^{\bar{p} + \bar{m}}_{J})^{c}$. 
\end{defn}

We now assemble the spectra $\MT_{\Sigma_{k}}(d)_{J}$ together into the $k$-cubic spectrum $\MT_{\Sigma_{k}}(d)_{\bullet}$. 
For subsets $I \subseteq J \subseteq \langle k \rangle$, we construct maps
$\kappa_{J, I}: \MT_{\Sigma_{k}}(d)_{J} \; \longrightarrow \; \MT_{\Sigma_{k}}(d)_{I}$
as follows. 

For each subset $J \subseteq \kset{k}$, we denote by $G(\bar{p}, \bar{m}, J)$ the Grassmannian manifold consisting of $p_{J}$-dimensional vector subspaces of $\R^{\bar{p} + \bar{m}}_{J}$. 
We denote by 
$$
U_{\bar{p},\bar{m}, J} \longrightarrow G(\bar{p}, \bar{m}, J) \quad \text{and} \quad
U^{\perp}_{\bar{p},\bar{m}, J} \longrightarrow G(\bar{p}, \bar{m}, J),
$$
the canonical vector bundle and its orthogonal compliment. 
For each pair of subsets $I \subseteq J \subseteq \langle k \rangle$ and $n \geq 0,$ there are maps
\begin{equation} \label{eq: mult then include 1} 
\tau_{J, I}: \Grass{J}{d}{n}\times G(\bar{p}, \bar{m}, J\setminus I) \longrightarrow \Grass{I}{d}{n} 
\end{equation}
given by sending a pair of vector subspaces,  
$$W \subset \; \R^{k}_{J^{c}}\times\R^{d-1+\widehat{n}}\times\R^{\bar{p}+\bar{m}}_{J^{c}},  \quad \quad V \subset \; \R^{\bar{p} + \bar{m}}_{J \setminus I },$$ 
to the subspace of $\R^{k}_{I^{c}}\times\R^{d-1+\widehat{n}}\times\R^{\bar{p}+\bar{m}}_{\langle I^{c} \rangle}$  given by the product,
$\R^{k}_{J\setminus I}\times W\times V$.

The maps $\tau_{J,I}$ are covered by bundle maps
\begin{equation} \label{eq: mult map plus include}
\xymatrix{
U^{\perp}_{\Sigma_{k}, d, n, J}\times U^{\perp}_{\bar{p}, \bar{m}, J\setminus I} \ar[rrr]^{\tau^{*}_{J,I}} \ar[d] &&&  U^{\perp}_{\Sigma_{k}, d, n, I} \ar[d] \\
\Grass{J}{d}{n}\times G(\bar{p}, \bar{m}, J\setminus I) \ar[rrr]^{\tau_{J, I}} &&& \Grass{I}{d}{n},
}
\end{equation} 
which are isomorphisms on the fibres. 
For each subset $J \subseteq \langle k \rangle$, the normal bundle $N_{P^{J}} \rightarrow P^{J}$ admits a Gauss map,
\begin{equation} \label{eq: Gauss map P 1}
\xymatrix{
N_{P^{J}} \ar[d] \ar[rr]^{\gamma^{*}_{P^{J}}} && U^{\perp}_{\bar{p}, \bar{m}, J} \ar[d]\\
P^{J} \ar[rr]^{\gamma_{P^{J}}} && G(\bar{p}, \bar{m}, J),}
\end{equation}
which induces a map of Thom-spaces,
\begin{equation} \label{eq: Thom gamma map} \Th(\gamma^{*}_{P^{J}}): \Th(N_{P^{J}}) \; \longrightarrow \; \Th(U^{\perp}_{\bar{p}, \bar{m}, J}). \end{equation}
For a pair of subsets $I \subseteq J \subseteq \langle k \rangle$, we define 
\begin{equation} \label{eq: k spectra n+d maps} \kappa^{n+d}_{J, I}: (\MT_{\Sigma_{k}}(d)_{J})_{n+d} \; \longrightarrow \; (\MT_{\Sigma_{k}}(d)_{I})_{n+d} \end{equation}
by the composition
$$\xymatrix{
\Th(U^{\perp}_{\Sigma_{k},d, n, J})\wedge (\R^{\bar{p} + \bar{m}}_{ J\setminus I })^{c}\wedge (\R^{\bar{p} + \bar{m}}_{ I })^{c} \ar[rr]^{(1)} && \Th(U^{\perp}_{\Sigma_{k},d, n, J})\wedge\Th(N_{P^{J\setminus I }})\wedge(\R^{\bar{p} + \bar{m}}_{ I })^{c} \ar[dd]^{(2)} \\
\\
 \Th(U^{\perp}_{\Sigma_{k},d, n, J})\wedge\Th(U^{\perp}_{\bar{p}, \bar{m}, J\setminus I})\wedge(\R^{\bar{p} + \bar{m}}_{ I })^{c} && \ar[ll]^{(3)} \Th(U^{\perp}_{\Sigma_{k},d, n, I})\wedge (\R^{\bar{p} + \bar{m}}_{I })^{c}.}
$$ The first map is given is given by $Id\wedge (c_{P^{J\setminus
    I}})\wedge Id$ where $c_{P^{J\setminus I}}$ the collapsing map
from (\ref{eq: collapse map P 1}). The second map is given by
$Id\wedge \Th(\gamma^{*}_{P^{J\setminus I}})\wedge Id$ where
$\Th(\gamma^{*}_{P^{J\setminus I}})$ is the map from (\ref{eq: Thom
  gamma map}). The third map is given by $\Th(\tau^{*}_{J,I})\wedge
Id$ where $\tau^{*}_{J,I}$ is the map from (\ref{eq: mult map plus
  include}).  
 It is easy to check that the maps $\kappa^{n+d}_{J, I}$ are compatible. 
 Namely, for subsets $K \subseteq I \subset J \subseteq \kset{k}$, we have
 $$\kappa^{n+d}_{I, K}  \circ \kappa^{n+d}_{J, I}  \; = \; \kappa^{n+d}_{J, K}.$$
From this compatibility, the maps $\kappa^{n+d}_{J, I}$ make $(\MT_{\Sigma_{k}}(d)_{\bullet})_{d+n}$ into a $k$-cubic space for each integer $n \geq 0$. 
 It follows from the construction that for each pair of subsets $I \subseteq J \subseteq \langle k \rangle$, the diagram
 $$\xymatrix{
 (\MT_{J}(d)_{\Sigma_{k}})_{d +n+1} \ar[rr]^{\kappa^{n+1}_{J,I}} && (\MT_{I}(d)_{\Sigma_{k}})_{d+n+1}\\
 S^{1}\wedge(\MT_{J}(d)_{\Sigma_{k}})_{n+d} \ar[u]^{\sigma^{n}} \ar[rr]^{\kappa^{n}_{J, I}} && S^{1}\wedge(\MT_{}(d)_{\Sigma_{k}})_{n+d} \ar[u]^{\sigma^{n}}}$$
 commutes, thus the $\kappa^{n}_{J,I}$ piece together to define maps of spectra
 \begin{equation} \label{eq: k-spectra maps} 
 \kappa_{J,I}: \MT_{J}(d)_{\Sigma_{k}} \; \longrightarrow \; \MT_{I}(d)_{\Sigma_{k}}
  \end{equation}
 such that $\kappa_{I, K}  \circ \kappa_{J, I}  \; = \; \kappa_{J, K}$ for any $K \subseteq I \subseteq J \subseteq \langle k \rangle$. 
We obtain a $k$-cubic spectrum, $\MT_{\Sigma_{k}}(d)_{\bullet}$. 
 We then denote
\begin{equation} \label{eq: total cofibre notation}  
\xymatrix{
\MT_{\Sigma_{k}}(d) \; := \; \tCofibre_{\kset{k}}\MT_{\Sigma_{k}}(d)_{\bullet}. 
}
\end{equation}
Recall that by definition, 
\begin{equation} \label{eq: total cofibre spectrum 1}
\xymatrix{
\MT_{\Sigma_{k}}(d) \; = \;
\Cofibre\bigg(\tCofibre_{\kset{k-1}}\MT_{\Sigma_{k}}(d)_{\bullet, k}
  \rightarrow \tCofibre_{\kset{k-1}}\MT_{\Sigma_{k}}(d)_{\bullet,
    \bar{k}}\bigg).
    }
 \end{equation}   
By inspection one sees that
$$\MT_{\Sigma_{k}}(d)_{\bullet, k} \; = \; \Sigma^{-1}\MT_{\Sigma_{k-1}}(d-p_{k}-1)_{\bullet}, \quad \quad \MT_{\Sigma_{k}}(d)_{\bullet, \bar{k}} \; = \; \MT_{\Sigma_{k-1}}(d)_{\bullet}.$$
Combining this observation with (\ref{eq: total cofibre spectrum 1}), we obtain a cofibre sequence
$$\Sigma^{-1}\MT(d-p_{k}-1)_{\Sigma_{k-1}} \; \longrightarrow \; \MT(d)_{\Sigma_{k-1}} \; \longrightarrow \; \MT(d)_{\Sigma_{k}}.$$
Continuing this cofibre sequence one term to the right and then applying the functor $\Omega^{\infty}( \underline{\hspace{.2cm}})$ yields a homotopy-fibre sequence,
$$\xymatrix{
\Omega^{\infty}\MT_{\Sigma_{k-1}}(d) \; \longrightarrow \; \Omega^{\infty}\MT_{\Sigma_{k}}(d) \; \longrightarrow \; \Omega^{\infty}\MT_{\Sigma_{k-1}}(d-p_{k}-1).
}$$
For the case that $k = 1$ and so $\Sigma_{k} = (P)$ for some closed manifold $P$, the above fibre sequence is the fibre sequence studied in \cite[Page 38]{P 13}. 

\section{The Main Theorem} \label{section: Main Theorem}
In this section we identify the homotopy type of the space $|\mathbf{D}^{\Sigma_{k}}_{d}|$. 
We will prove:
\begin{theorem} \label{thm: Main Theorem} There is a weak homotopy equivalence
$|\mathbf{D}^{\Sigma_{k}}_{d}| \simeq \Omega^{\infty-1}\MT(d)_{\Sigma_{k}}.$
\end{theorem}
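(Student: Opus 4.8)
The plan is to construct a parametrized Pontryagin--Thom map
$$
\mathrm{PT}\colon |\mathbf{D}^{\Sigma_{k}}_{d}| \lra \Omega^{\infty-1}_{\langle k\rangle}\MT_{\Sigma_{k}}(d)_{\bullet},
$$
to show it is a weak homotopy equivalence, and then to apply Theorem~\ref{thm: loopspace eq 1} (with $m=1$), which identifies the target with $\Omega^{\infty-1}\tCofibre_{\langle k\rangle}\MT_{\Sigma_{k}}(d)_{\bullet}=\Omega^{\infty-1}\MT_{\Sigma_{k}}(d)$. The map is built exactly as in \cite{GMTW 09}: given $W\in\mathbf{D}^{\Sigma_{k}}_{d}(X)$, each face $\partial_{I}W=\beta_{I}W\times\phi_{I}(P^{I})$ is a product (Definition~\ref{defn: Sigma-submanifold}), so collapsing onto a stabilized normal tube of $\beta_{I}W\subset X\times\R\times\R^{k}_{+,I^{c}}\times\R^{\infty}\times\R^{\bar{p}+\bar{m}}_{I^{c}}$ and using the collapse maps $c_{P^{I}}$ of~(\ref{eq: collapse map P 1}) to absorb the $\phi_{I}(P^{I})$ factor produces, compatibly over inclusions $I\subseteq J$, a family of maps into the vertices $\MT_{\Sigma_{k}}(d)_{J}=\Sigma^{-|J|}\MT(d-p_{J}-|J|)$ smashed with $(\R^{\bar{p}+\bar{m}}_{J})^{c}$. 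Such a family is a point of $\Omega^{\infty-1}_{\langle k\rangle}\MT_{\Sigma_{k}}(d)_{\bullet}$; the construction is natural in $X$ and passes to concordance classes, hence descends to $\mathrm{PT}$. On the $\emptyset$-vertex it is the usual GMTW scanning map.

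I would show that $\mathrm{PT}$ is a weak equivalence by induction on $k$. For $k=0$ there are no singularities: $\mathbf{D}^{\Sigma_{0}}_{d}=\mathbf{D}_{d}$ and $\MT_{\Sigma_{0}}(d)=\MT(d)$, so the claim is the theorem of Galatius--Madsen--Tillmann--Weiss, $|\mathbf{D}_{d}|\simeq\Omega^{\infty-1}\MT(d)$ (equivalently, one could take $\ell=0$ as the base case, where it is Genauer's theorem for manifolds with corners \cite{G 08}).

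For the inductive step I would compare two homotopy fibre sequences. Geometrically, by Section~\ref{section: A fibre sequence of classifying spaces} the transformation $\beta_{k}$ has the concordance lifting property (Lemma~\ref{lemma: bockstein k-l concordance}, Proposition~\ref{prop: concordance lifting property}), its fibre over $\emptyset$ is $\mathbf{D}^{\Sigma_{k-1}}_{d}$, and all non-empty fibres of $|\beta_{k}|$ are equivalent (Corollary~\ref{corollary: equivalent fibres}), so Corollary~\ref{concordance corollary} gives
$$
|\mathbf{D}^{\Sigma_{k-1}}_{d}| \lra |\mathbf{D}^{\Sigma_{k}}_{d}| \st{|\beta_{k}|}{\lra} |\mathbf{D}^{\Sigma_{k-1}}_{d-p_{k}-1}|.
$$
On the spectrum side, rotating the cofibre sequence $\Sigma^{-1}\MT_{\Sigma_{k-1}}(d-p_{k}-1)\to\MT_{\Sigma_{k-1}}(d)\to\MT_{\Sigma_{k}}(d)$ from Section~\ref{section: k spaces} and applying $\Omega^{\infty-1}$ yields the fibre sequence
$$
\Omega^{\infty-1}\MT_{\Sigma_{k-1}}(d) \lra \Omega^{\infty-1}\MT_{\Sigma_{k}}(d) \lra \Omega^{\infty-1}\MT_{\Sigma_{k-1}}(d-p_{k}-1).
$$
By construction $\mathrm{PT}$ intertwines these: collapsing the last face $\partial_{k}$ corresponds to the structure maps $\kappa_{\langle k\rangle,\cdot}$ used to form $\tCofibre_{\langle k\rangle}$, and regarding a $\Sigma_{k-1}$-manifold as a $\Sigma_{k}$-manifold with empty $k$-face corresponds to $\MT_{\Sigma_{k-1}}(d)\to\MT_{\Sigma_{k}}(d)$. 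The two outer vertical maps are equivalences by the induction hypothesis applied in dimensions $d$ and $d-p_{k}-1$. Since $|\mathbf{D}^{\Sigma_{k}}_{d}|$ is group-like (the monoid lemma of Section~\ref{section: A fibre sequence of classifying spaces} identifies $\pi_{0}$ with the group $\Omega^{\Sigma_{k}}_{d-1}$), $|\beta_{k}|$ is a homomorphism of group-like $H$-spaces, and likewise on the spectrum side, a comparison over the basepoint $\emptyset$ suffices, and a five-lemma argument on the two long exact sequences forces the middle vertical map to be a weak equivalence. This closes the induction, and Theorem~\ref{thm: loopspace eq 1} then finishes the proof.

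The main obstacle is setting up $\mathrm{PT}$ carefully enough to verify that it intertwines the purely geometric $\beta_{k}$-fibre sequence with the cofibre sequence hard-wired into $\tCofibre_{\langle k\rangle}$; once that compatibility is established, everything else is GMTW/Genauer input or formal homotopy theory. A variant that sidesteps the cubic loop space would build directly a map $|\mathbf{D}^{\Sigma_{k}}_{d}|\to\Omega^{\infty-1}\MT_{\Sigma_{k}}(d)$ into the total cofibre and run the same two-fibre-sequence comparison, trading the cubic bookkeeping for a slightly more delicate identification with $\tCofibre_{\langle k\rangle}$.
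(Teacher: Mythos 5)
Your proposal follows essentially the same route as the paper: a parametrized Pontryagin--Thom natural transformation into the sheaf represented by the cubical loop space $\Omega^{\infty-1}_{\langle k\rangle}\MT_{\Sigma_{k}}(d)_{\bullet}$, an induction on $k$ comparing the geometric fibre sequence of Section \ref{section: A fibre sequence of classifying spaces} with the spectrum-level one via the five lemma, and Theorem \ref{thm: loopspace eq 1} to pass to the total cofibre. The only (inessential) differences are that you start the induction at $k=0$ with the GMTW theorem rather than at $k=1$ with the Part~1 result, and you elide the auxiliary sheaf $\mathbf{D}^{\Sigma_{k},\nu}_{d,n}$ of vertical tubular neighborhoods that the paper uses to make the choice of normal tubes functorial.
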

Combining this with the weak homotopy equivalence $|\mathbf{D}^{\Sigma_{k}}_{d}| \; \simeq \; B\mathbf{Cob}_{d}^{\Sigma_{k}}$ proved in Section \ref{subsection: the cobordism category sheaf} yields the weak homotopy equivalence 
$B\mathbf{Cob}_{d}^{\Sigma_{k}} \; \simeq \; \Omega^{\infty -1}\MT(d)_{\Sigma_{k}}$ asserted in Theorem \ref{thm: weak homotopy equivalence}. 
By Theorem \ref{thm: loopspace eq 1} it will suffice to prove the homotopy equivalence
$|\mathbf{D}^{\Sigma_{k}}_{d}| \; \simeq \; \Omega^{\infty -1}_{\kset{k}}\MT_{\Sigma_{k}}(d)_{\bullet}.$

\subsection{A parametrized Pontryagin Thom construction} \label{subsection: A Parametrized Thom Pontryagin Construction}
We now work to construct a zig-zag of weak homotopy equivalences
$\xymatrix{
|\mathbf{D}^{\Sigma_{k}}_{d}| \; \ar@{<~>}[r] & \; \Omega^{\infty -1}_{\kset{k}}\MT_{\Sigma_{k}}(d)_{\bullet}.
}$
For each $n \in \N$, we will need a modified version of the sheaf $\mathbf{D}^{\Sigma_{k}}_{d}$.
We need a preliminary technical definition. 
\begin{defn} 
Let $p: Y \longrightarrow X$ be a submersion. Let $i_{C}: C
\hookrightarrow Y$ be a smooth submanifold and suppose that
$p\mid_{C}$ is still a submersion. A \textit{vertical} tubular
neighborhood for $C$ in $Y$ consists of a smooth vector bundle $q: N
\rightarrow C$ (which in our case will always be the normal bundle of
$C$) with zero section $s$, along with an open embedding $e: N
\longrightarrow Y$ such that $e\circ s = i_{C}$, and $p\circ e =
p\circ i_{C} \circ q$. \end{defn}
\begin{defn} \label{defn: D vert sheaf} For $X \in \mathcal{X}$ we define $\mathbf{D}^{\Sigma_{k}, \nu}_{d, n}(X)$ to be the set pairs $(W, e)$ such that
\begin{enumerate} \itemsep.2cm
\item[i.] $W \in \mathbf{D}^{\Sigma_{k}}_{d, n}(X)$,
\item[ii.] The map 
$$e: N_{W} \hookrightarrow X\times\R\times\R^{k}_{+}\times\R^{d-1+\widehat{n}}\times\R^{\bar{p}+\bar{m}},$$
  where $N_{W}$ is the normal bundle of $W$, is a \textit{vertical
    tubular neighborhood} of $W$ with respect to the submersion
  $\pi: W \longrightarrow X$, that satisfies the following condition:
   for each subset $I \subseteq \langle k\rangle$,
  the restriction $e\mid_{\partial_{I}W}$ has the factorization,
$e\mid_{N_{\partial_{I}W}} \; = \; e_{N_{\beta_{I}W}}\times e_{P^{I}}$
where 
$$e_{N_{\beta_{I}W}}: N_{\beta_{I}W} \hookrightarrow X\times\R \times\R^{k}_{+, I^{c}}\times\R^{d-1+\widehat{n}}\times\R^{\bar{p}+\bar{m}}_{I^{c}},
$$
is a vertical tubular neighborhood for $\beta_{I}W$, $N_{\beta_{I}W}$ the is normal bundle, and 
$$e_{P^{I}}: N_{P^{I}} \hookrightarrow \R^{\bar{p}+\bar{m}}_{I}$$
is the tubular neighborhood for $N_{P^{I}}$ specified in (\ref{eq: tube nbh embeddings P 1}). 
\end{enumerate}
\end{defn}
Clearly, $\mathbf{D}^{\Sigma_{k}, \nu}_{d, n}$ satisfies the sheaf condition. 
For each $n$ there is a forgetful map
\begin{equation} \label{equation: forgetful map}
\mathbf{D}^{\Sigma_{k}, \nu}_{d, n} \longrightarrow \mathbf{D}^{\Sigma_{k}}_{d}.
\end{equation}
It follows easily from the existence of \textit{tubular neighborhoods} that the induced map in the limit
$$\colim_{n\to\infty}|\mathbf{D}^{\Sigma_{k}, \nu}_{d, n}| \stackrel{\simeq} \longrightarrow |\mathbf{D}^{\Sigma_{k}}_{d}|$$
is a homotopy equivalence. 
For $X \in \Ob(\mathcal{X})$, we define 
$$\mathcal{Z}^{\Sigma_{k}}_{d}(X) := \Maps(X\times\R, \; \Omega^{\infty-1}_{\langle k \rangle}\MT_{\Sigma_{k}}(d)_{\bullet}).$$
The assignment $X \mapsto \mathcal{Z}^{\Sigma_{k}}_{d}(X)$ defines a sheaf on $\mathcal{X}$. 
The representing space $|\mathcal{Z}^{\Sigma_{k}}_{d}|$, is weakly homotopy equivalent to $\Omega^{\infty-1}_{\langle k \rangle}\MT_{\Sigma_{k}}(d)_{\bullet}$. 
We now proceed for each $n$ to define a natural transformation
$T^{\Sigma_{k}}_{d,n}: \mathbf{D}^{\Sigma_{k}, \nu}_{d, n} \longrightarrow \mathcal{Z}^{\Sigma_{k}}_{d},$ based on the \textit{Pontryagin-Thom construction}. 
\begin{Construction}
Let $X \in \Ob(\mathcal{X})$ and let $(W, e) \in \mathbf{D}^{\Sigma_{k}, \nu}_{d, n}(X)$. 
\begin{enumerate} \itemsep.4cm
\item[(a)]
For each subset $J \subseteq \kset{k}$, let 
$N_{\partial_{J}W} \; \longrightarrow \; \partial_{J}W$
denote the normal bundle associated to $\partial_{J}W$ as a submanifold of $\R\times\R^{k}_{+, J^{c}}\times\R^{d-1+\widehat{n}}\times\R^{\bar{p} + \bar{m}}$. 
For each subset $I \subseteq J$, we have
$N_{\partial_{J}W}\mid_{\partial_{I}W} \; = \; N_{\partial_{I}W}$
(see Remark \ref{remark: normal bundles}).
Furthermore, for each for each subset $J \subseteq \langle k \rangle$ there is a factorization 
$$N_{\partial_{J}}W = N_{\beta_{J}W}\times N_{P^{J}}$$ 
where $N_{\beta_{J}W} \rightarrow \beta_{J}W$ is the normal  bundle for $\beta_{J}W \subset \R\times\R^{k}_{+, J^{c}}\times\R^{d-1+\widehat{n}}\times\R^{\bar{p} + \bar{m}}_{J^{c}}$ and $N_{P^{J}} \rightarrow P^{J}$ is the normal bundle for $\phi_{J}(P^{J}) \subset \R^{\bar{p} + \bar{m}}_{J}$. 
\item[(b)]
For each $J \subseteq \kset{k}$, the \textit{Gauss map} for the normal bundle $N_{\partial_{J}W} \rightarrow \partial_{J}W$ factors as the product,
$$\xymatrix{
N_{\partial_{J}W} \; = \; N_{\beta_{J}W}\times N_{P^{J}} \ar[rrr]^{\gamma^{*}_{\beta_{J}}\times \gamma^{*}_{P^{J}}} \ar[d] &&& U^{\perp}_{\Sigma_{k}, d, n, J}\times U^{\perp}_{\bar{p}, \bar{m}, J} \ar[d] \\
\partial_{J}W \; = \; \beta_{J}W\times P^{J} \ar[rrr]^{\gamma_{\beta_{J}}\times \gamma_{P^{J}}} &&&  \; \Grass{J}{d}{n}\times G(\bar{p}, \bar{m}, J).}$$
This bundle map $\gamma^{*}_{\beta_{J}}\times \gamma^{*}_{P^{J}}$ induce maps of Thom-spaces,
\begin{equation} \label{eq: Thom space gauss maps}
 \widehat{\gamma_{\beta_{J}W}}\wedge \widehat{\gamma}_{P^{J}}: \Th(N_{\beta_{J}W})\wedge \Th(N_{P^{J}}) \; \longrightarrow \; \Th(U^{\perp}_{\Sigma_{k}, d, n, J})\wedge \Th(U^{\perp}_{\bar{p}, \bar{m}, J}).
 \end{equation}
 \item[(c)]
The vertical tubular neighborhood 
$$
e_{\beta_{J}W}: N_{\beta_{J}W} \longrightarrow X\times\R\times\R^{k}_{+, J^{c}}\times\R^{d-1+\widehat{n}}\times\R^{\bar{p} + \bar{m}}_{J^{c}}$$
induces a collapsing map 
$$
c_{\beta_{J}W}: X\times\R\times(\R^{k}_{+, J^{c}}\times\R^{d-1+\widehat{n}}\times\R^{\bar{p} + \bar{m}}_{J^{c}})^{c} \longrightarrow \Th(N_{\beta_{J}W}).
$$
Using the collapse map $c_{\beta_{J}W}$ together with $\widehat{\gamma}_{\beta_{J}W}$ from (\ref{eq: Thom space gauss maps}), we define
$$
\widehat{T}^{\Sigma_{k}}_{d+n}(W)_{J}:  
(X\times\R)\times (\R^{k}_{+, J^{c}}\times\R^{d-1+\widehat{n}}\times\R^{\bar{p} + \bar{m}}_{J^{c}})^{c}\wedge(\R^{p_{k}+m_{k}}_{J})^{c} \; \longrightarrow \; \Th(U^{\perp}_{\Sigma_{k}, d, n J})\wedge (\R^{\bar{p}+\bar{m}}_{J})^{c} 
$$
by the formula,
$$\bigg((x, t), \; y, \; z\bigg) \; \mapsto \; \bigg(\widehat{\gamma}_{\beta_{J}W}(c_{\beta_{J}W}((x, t), \; y)) ,\; \; z\bigg).$$
\item[(d)]
By our construction it follows that for each pair of subsets $I \subseteq J \subset \kset{k}$, the diagram 
\begin{equation} \label{eq: commuting I J 1}
\xymatrix{
X\times\R\times(\R^{k}_{+, I^{c}}\times\R^{d-1+\widehat{n}}\times\R^{\bar{p} + \bar{m}})^{c} \ar[rrr]^{\widehat{T}^{\Sigma_{k}}_{d+n}(W)_{I}} &&& \Th(U^{\perp}_{\Sigma_{k}, d, n, I})\wedge (\R^{\bar{p}+\bar{m}}_{I})^{c}\\
X\times\R\times(\R^{k}_{+, J^{c}}\times\R^{d-1+\widehat{n}}\times\R^{\bar{p} + \bar{m}})^{c} \ar[rrr]^{\widehat{T}^{\Sigma_{k}}_{d+n}(W)_{J}} \ar[u] &&& \Th(U^{\perp}_{\Sigma_{k}, d, n J})\wedge (\R^{\bar{p}+\bar{m}}_{J})^{c} \ar[u]^{\kappa_{J,I}^{n+d}}}
\end{equation}
commutes, where the left vertical map is the inclusion and the maps $\kappa_{J,I}^{n+d}$ are the \textit{edges} in the $k$-cubic space $(\MT_{\Sigma_{k}}(d)_{\bullet})_{n+d}$.
It follows that the maps $\widehat{T}_{d+n}^{\Sigma_{k}}(W)_{J}$, for $J \subseteq \langle k \rangle$, are compatible and thus yield a morphism of 
 $k$-cubic spaces
$$\widehat{T}^{\Sigma_{k}}_{d+n}(W)_{\bullet}: \;  X\times\R\times D^{\kset{k}, d-1 +n}_{\bullet} \; \longrightarrow \; (\MT_{\Sigma_{k}}(d)_{\bullet})_{n+d}.$$
\item[(e)] Let $Ad_{d, n}$ denote the adjoint homeomorphism,
$$ \xymatrix{
\Maps_{\kset{k}}\bigg(X\times\R\times D^{\kset{k}, d-1 +n}_{\bullet}, \; (\MT_{\Sigma_{k}}(d)_{\bullet})_{n+d}\bigg) \ar[r]^{\ \ \ \cong} & \Maps\bigg(X\times\R, \; \Omega^{d-1+n}_{\kset{k}}(\MT_{\Sigma_{k}}(d)_{\bullet})_{n+d}\bigg).}
$$
Finally, we define the natural transformation 
$T^{\Sigma_{k}}_{d,n}: \mathbf{D}^{\Sigma_{k}, \nu}_{d, n} \longrightarrow \mathcal{Z}^{\Sigma_{k}}_{d},$
by setting 
$$T_{d+n}^{\Sigma_{k}}(W) := \text{Ad}_{n, d}\bigg(\widehat{T}^{\Sigma_{k}}_{d+n}(W)_{\bullet}\bigg).$$
\end{enumerate}
\end{Construction}

\begin{proof}[Proof of Theorem \ref{thm: Main Theorem}] 
We prove the result by induction on $k$. 
In the limit $n \to \infty$, the natural transformations $T^{\Sigma_{k}}_{d, n}$ induce a map of homotopy fibre-sequences,
\begin{equation} \label{eq: fibre map limit 2}
\xymatrix{
|\mathbf{D}^{\Sigma_{k-1}}_{d}| \ar[rr] && |\mathbf{D}^{\Sigma_{k}}_{d}| \ar[rr] && |\mathbf{D}^{\Sigma_{k-1}}_{d-p_{k}-1}| \\
{\displaystyle \colim_{n\to\infty}}|\mathbf{D}^{\Sigma_{k-1}, \nu}_{d, n}| \ar[u]^{\simeq} \ar[rr] \ar[d] && {\displaystyle \colim_{n\to\infty}}|\mathbf{D}^{\Sigma_{k}, \nu}_{d, n}| \ar[u]^{\simeq} \ar[d] \ar[rr] && {\displaystyle \colim_{n\to\infty}}|\mathbf{D}^{\Sigma_{k-1}, \nu}_{d-p_{k}-1, n}| \ar[u]^{\simeq} \ar[d]\\
|\mathcal{Z}^{\Sigma_{k-1}}_{d}| \ar[d]^{\simeq} \ar[rr] && |\mathcal{Z}^{\Sigma_{k}}_{d}| \ar[d]^{\simeq} \ar[rr] && |\mathcal{Z}^{\Sigma_{k-1}}_{d-p_{k}-1}| \ar[d]^{\simeq} \\
\Omega^{\infty-1}_{\kset{k-1}}\MT_{\Sigma_{k-1}}(d)_{\bullet} \ar[rr] && \Omega^{\infty-1}_{\kset{k}}\MT_{\Sigma_{k}}(d)_{\bullet} \ar[rr] && \Omega^{\infty-1}_{\kset{k-1}}\MT_{\Sigma_{k-1}}(d-p_{k}-1)_{\bullet} 
}
\end{equation}
In the case $k = 1$, the map
$$|T^{\Sigma_{1}}_{d}|: {\displaystyle \colim_{n\to\infty}}|\mathbf{D}^{\Sigma_{1}, \nu}_{d, n}| \longrightarrow |\mathcal{Z}^{\Sigma_{1}}_{d}|$$
is the map from \cite[Theorem 7.1]{P 13}, and thus is a homotopy equivalence for all $d \geq 0$; this proves the base case of the induction. 
Now assume that 
$$|T^{\Sigma_{k-1}}_{d}|: {\displaystyle \colim_{n\to\infty}}|\mathbf{D}^{\Sigma_{k-1}, \nu}_{d, n}| \longrightarrow |\mathcal{Z}^{\Sigma_{k-1}}_{d}|$$
is a homotopy equivalence for all $d$. 
Then, the first and third columns of (\ref{eq: fibre map limit 2}) are homotopy equivalences. The result follows from an application of the \textit{five lemma} on the map of long-exact sequences in homotopy groups induced by the fibre-sequences from (\ref{eq: fibre map limit 2}). 
\end{proof}

\appendix
\section{} \label{embedding restriction}
The following lemma is based on the proof of the main result from \cite{L 63}. 
This lemma implies Lemma \ref{lemma: serre fibration embeddings} from Section \ref{section: mapping spaces}. 
\begin{lemma} \label{lemma: serre fibration embeddings} 
Let $M$ be a closed $\Sig{k}{0}$-manifold. 
Let $0 \leq \ell < k$ be integers. 
Then for any $n \in \N$, the restriction map 
 $$r_{\ell+1}: \mathcal{E}_{\Sig{k}{\ell}, n}(M) \; \longrightarrow \; \mathcal{E}_{\Sig{k-1}{\ell}, n}(\partial_{\ell}M), \quad \varphi \mapsto \varphi|_{\partial_{\ell+1}M}.$$ 
 is a locally trivial fibre-bundle.
 \end{lemma}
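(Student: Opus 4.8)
The plan is to prove that the restriction map $r_{\ell+1}\colon \mathcal{E}_{\Sig{k}{\ell}, n}(M) \longrightarrow \mathcal{E}_{\Sig{k-1}{\ell}, n}(\partial_{\ell+1}M)$ is a locally trivial fibre bundle by adapting the classical argument of Lima \cite{L 63} for the restriction of embeddings to a submanifold, carefully carried through in the presence of the collar, factorization, and corner structure of $\Sig{k}{\ell}$-manifolds. The key point is that Lima's proof is really a local statement: given an embedding $\varphi_0 \in \mathcal{E}_{\Sig{k-1}{\ell}, n}(\partial_{\ell+1}M)$ one builds, over a neighborhood $U$ of $\varphi_0$, a continuous family of ambient isotopies carrying $\varphi_0$ to any nearby $\varphi \in U$, and this family is used to produce a local section and hence a local trivialization. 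I would first set up the relevant function spaces and fix an element $\varphi_0$ in the base; then construct the local trivialization; then verify at every stage that the constructions respect the three pieces of structure that distinguish $\mathcal{E}_{\Sig{k}{\ell},n}$ from an ordinary neat-embedding space.

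First I would recall the structure of the collar of $\partial_{\ell+1}M$ inside $M$: by condition ii of Definition \ref{defn: closed Manifold def} there is an embedding $h_{\{\ell+1\}}\colon \partial_{\ell+1}M \times [0,\infty) \longrightarrow M$, and any element of $\mathcal{E}_{\Sig{k}{\ell},n}(M)$ is, by condition ii of Definition \ref{defn: i-P maps}, required to be a product near this collar. Thus an embedding $\varphi$ of $M$ is determined, on a collar neighborhood of $\partial_{\ell+1}M$, by its restriction $\varphi|_{\partial_{\ell+1}M}$ together with the choice of collar direction in the ambient space $\R^k_+ \times \R^{\widehat n} \times \R^{\bar p+\bar m}$; the normal bundle of $\partial_{\ell+1}M$ in $M$ is trivial (it is the collar direction), and correspondingly the normal directions in the target split off a trivial factor, as in Remark \ref{remark: normal bundles}. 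The point of this observation is that extending a germ of embedding near $\partial_{\ell+1}M$ to all of $M$ reduces, away from the collar, to an ordinary Lima-type extension problem for a compact manifold with corners, while near the collar it is essentially a product with the identity.

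Next I would carry out the local trivialization. Fix $\varphi_0 \in \mathcal{E}_{\Sig{k-1}{\ell},n}(\partial_{\ell+1}M)$ and a lift $\Phi_0 \in \mathcal{E}_{\Sig{k}{\ell},n}(M)$ with $r_{\ell+1}(\Phi_0)=\varphi_0$ (such a lift exists since the collar and the neat-embedding structure allow one to thicken $\varphi_0$). For $\varphi$ in a suitable neighborhood $U$ of $\varphi_0$, use the tubular neighborhood theorem for $\Sig{k-1}{\ell}$-submanifolds (which respects the corner stratification and the $P^I$-factorizations by the compatibility built into Definitions \ref{defn: closed Manifold def} and \ref{defn: i-P maps}) to produce a $\Sig{k-1}{\ell}$-ambient isotopy $\psi_\varphi$ of the target, depending continuously on $\varphi$, with $\psi_\varphi \circ \varphi_0 = \varphi$ and $\psi_{\varphi_0} = \mathrm{id}$. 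The isotopy $\psi_\varphi$ is constructed so as to be a product $\psi_{\beta}\times \mathrm{Id}_{P^I}$ on the relevant strata and to respect the collar coordinates; this is exactly where one uses that $m > p$-type isotopy extension and the fact that all the structure is "standard" in the $\R^{\bar p+\bar m}$ factor. Extending $\psi_\varphi$ to an ambient isotopy of the larger space $\R^k_+\times\R^{\widehat n}\times\R^{\bar p+\bar m}$ that is the identity outside a neighborhood of the collar and on the collar is the identity-times-$\psi_\varphi$, one obtains a homeomorphism $r_{\ell+1}^{-1}(U) \cong U \times r_{\ell+1}^{-1}(\varphi_0)$ sending $\Psi \mapsto (r_{\ell+1}(\Psi),\ \psi_{r_{\ell+1}(\Psi)}^{-1}\circ \Psi)$, which is the desired local trivialization; naturality of all the constructions in the parameter makes this continuous with continuous inverse.

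The main obstacle I expect is the bookkeeping needed to make the ambient isotopies $\psi_\varphi$ genuinely compatible with \emph{all} the structure at once: they must preserve each $\partial_I M$, restrict to product maps $\psi_{\beta_I}\times \mathrm{Id}_{P^I}$ on each $\partial_I M$ for $I \subseteq \langle \ell\rangle$, respect the collars $h_I$ simultaneously for all $I$, and interact correctly with the prescribed embeddings $\phi_i$ of the singularity models. Achieving this forces an inductive construction over the poset of strata (build the isotopy first on the deepest corners $\partial_I M$, then extend outward using the collar identifications), and one must check at each inductive step that the extension can be chosen to depend continuously on $\varphi$ and to be supported near the collar of $\partial_{\ell+1}M$. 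Everything else — the tubular neighborhood theorem, the isotopy extension theorem, the passage from a local trivialization to the fibre-bundle conclusion — is standard once this compatibility is in place, and indeed the structure of the argument is exactly parallel to \cite[Lemma A.1]{P 13} and to Lima's original proof, with the singularity strata handled by the same inductive device used throughout the paper.
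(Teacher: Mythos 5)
Your proposal is correct and follows essentially the same route as the paper: both adapt Lima's local-triviality argument by producing, over a neighborhood $U$ of a fixed $\varphi_0$ in the base, a continuous family of compactly supported ambient $\Sigma$-diffeomorphisms of the target carrying $\varphi_0$ to nearby $\varphi$, extending them as a damped product over the collar coordinate of $\partial_{\ell+1}M$, and trivializing $r_{\ell+1}^{-1}(U)$ by composition. The only real difference is presentational: where you invoke tubular-neighborhood/isotopy-extension machinery and an induction over strata, the paper writes Lima's explicit straight-line formula with cutoff functions and checks directly (via openness of the diffeomorphism group) that it lands in $\Diff_{\Sigma}(\cdot)^{c}$.
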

 \begin{proof}
 We will prove the theorem explicitly in the case that $\ell = 0$. 
 Then general case is proven in exactly the same way. 
 
 Let $f \in \mathcal{E}_{\Sig{k-1}{0}, n}(\partial_{1}M, X)$. 
 Notice, that the space $\R^{k}_{+}\times \R^{n}$ is a (non-compact) $\Sigma^{0}_{k}$-manifold with $\partial_{\ell}(\R^{k}_{+}\times \R^{n}) = \R^{k}_{+,\{\ell\}^{c}}\times \R^{n}$ for all $\ell \in \langle k \rangle$. 
 We may then form the mapping spaces $C^{\infty}_{\Sigma^{0}_{k}}(\R^{k}_{+}\times \R^{n})$ and $\Diff_{\Sigma^{0}_{k}}(\R^{k}_{+}\times \R^{n})$ as in Definition \ref{defn: P-morphism}. 
 We then let
 $C^{\infty}_{\Sigma^{0}_{k}}(\R^{k}_{+}\times \R^{n})^{c}$ denote the subspace of $C^{\infty}_{\Sigma^{0}_{k}}(\R^{k}_{+}\times \R^{n})$ consisting of all smooth $\Sigma^{0}_{k}$-maps $f$, such that $f$ is the identity outside of some compact subset. 
 We define $\Diff_{\Sigma^{0}_{k}}(\R^{k}_{+}\times \R^{n})^{c}$ similarly.
To prove the lemma, it will suffice to find a neighborhood $U \subset \mathcal{E}_{\Sig{k-1}{0}, n}(\partial_{1}M, X)$ of $f$ and a continuous map 
$\eta: U \longrightarrow \Diff_{\Sigma^{0}_{k}}(\R^{k}_{+}\times \R^{n})^{c}$
that satisfies:
\begin{enumerate} \itemsep.2cm
\item[(a)] $\eta(f) = Id_{\R^{k}_{+}\times\R^{n}}$,
\item[(b)] $r_{1}(\eta(g)\circ \hat{f}) = g$ for all $g \in U$ and $\hat{f} \in r_{1}^{-1}(f).$
\end{enumerate}
With such a map $\eta$ constructed, the map
$$U\times r_{1}^{-1}(f) \longrightarrow r^{-1}_{1}(U), \quad (g, \hat{f}) \mapsto \eta(g)\circ\hat{f}$$
defines a trivialization of $r_{1}$ over the neighborhood $U$. 
Thus, the proof of the lemma will be complete.

For any real number $\delta > 0$, let $N_{\delta}$ be the tubular neighborhood about $f(\partial_{1}W)$ in $\R^{k-1}_{+}\times\R^{n}$ and let 
$$\pi: N_{\delta} \longrightarrow f(\partial_{1}W)$$ 
be the projection map. 
Let $U$ be an open neighborhood of $f$ in $\mathcal{E}_{\Sig{k-1}{0}, n}(\partial_{1}M, X)$ with the property that $g(\partial_{1}W) \subset N_{\delta/2}$ for all $g \in U$. 

Let $\lambda: [0, \infty) \longrightarrow [0,1]$ be a non-increasing smooth function that satisfies:
\begin{itemize} \itemsep.2cm
\item $\lambda(t) = 1$ for $t \leq \frac{\delta}{2}$,  
\item $\lambda(t) = 0$ for $t \geq \frac{3\delta}{4}$. 
\end{itemize}
The let $\rho: [0,1] \longrightarrow [0,1]$ be a non-decreasing, smooth function that satisfies: 
 \begin{itemize} \itemsep.2cm
 \item $\rho(t) = 1$ for $t \leq \frac{1}{4}$, 
 \item $\rho(t) = 0$ for $t \geq \frac{3}{4}$. 
 \end{itemize}
We define a map
$$\xymatrix{
\eta: U\times[0, 1] \longrightarrow   C^{\infty}_{\Sigma^{0}_{k-1}}(\R^{k-1}_{+}\times\R^{n})^{c}
}
$$
by the formula
\begin{equation} \label{equation: main function 1}
\eta(g, t)(\bar{y}) \; = \; \bar{y} + \rho(t)\cdot\lambda\bigg(\text{dist}(\bar{y}, \; f(\partial_{1}W))\bigg)\cdot \bigg(g(f^{-1}(\pi(\bar{y}))) - \pi(\bar{y})\bigg),
\end{equation}
where $g \in U$, $t \in [0,1]$ and $\bar{y} \in \R^{k-1}_{+}\times\R^{n}$. 
In the above formula,
$\text{dist}(\bar{y}, \; f(\partial_{1}W))$ is the Euclidean distance and all sums are taken coordinatewise. 
(This function is similar to the function constructed in the proof of the main theorem from \cite{L 63}.)
The following properties of (\ref{equation: main function 1}) can be verified directly:
\begin{enumerate} \itemsep.3cm
 \item[i.] $\eta(f, t)(\bar{y}) = \bar{y}$ \quad for all $t \in [0,1]$ \quad and \quad $\bar{y} \in \R^{k}_{+}\times\R^{n}$, 
\item[ii.] $\eta(g, 0)(f(x)) = g(x)$ \quad for all \quad $x \in \partial_{1}W$.
\item[iii.] $\eta(g, 1)(\bar{y}) = \bar{y}$ \quad for all $g \in \tilde{U}$ and $\bar{y} \in \R^{k}_{+}\times\R^{n}$.
\end{enumerate}
Let 
$$\xymatrix{
\widehat{\eta}: U \longrightarrow \Maps\bigg([0,1], \; C^{\infty}_{\Sigma^{0}_{k-1}}(\R^{k-1}_{+}\times\R^{n})^{c}\bigg)
}$$
be the map determined by $\eta$ under adjunction. 
By property i., $\widehat{\eta}(f)(t) \in C^{\infty}_{\Sigma^{0}_{k-1}}(\R^{k-1}_{+}\times\R^{n})^{c}$ is the identity map for all $t \in [0,1]$. 
Since 
$$\xymatrix{
\Diff_{\Sigma^{0}_{k-1}}(\R^{k-1}_{+}\times\R^{n})^{c} \subset C^{\infty}_{\Sigma^{0}_{k-1}}(\R^{k-1}_{+}\times\R^{n})^{c},
}$$ 
is an open subset (see Proposition \ref{prop: P-diff open set} and \cite[Proposition 3.1]{P 13}),
it follows that 
$$\xymatrix{
\Maps\bigg([0,1], \; \Diff_{\Sigma^{0}_{k-1}}(\R^{k-1}_{+}\times\R^{k})^{c}\bigg) \; \subset \; \Maps\bigg([0,1], \; C^{\infty}_{\Sigma^{0}_{k-1}}(\R^{k-1}_{+}\times\R^{n})^{c}\bigg)
}$$
is an open subset in the compact-open topology. 
Thus, since $\widehat{\eta}(f)(t): \R^{k-1}_{+}\times\R^{n} \longrightarrow \R^{k-1}_{+}\times\R^{n}$ is the identity for all $t \in [0,1]$ (and hence is a diffeomorphism for all $t \in [0,1]$), we may choose a smaller neighborhood $\bar{U} \subset U$ of $f$ such that the restriction 
$\widehat{\eta}|_{\bar{U}}$ has its image contained in the space
$$\xymatrix{
\Maps\bigg([0,1], \; \Diff_{\Sigma^{0}_{k-1}}(\R^{k-1}_{+}\times\R^{k})^{c}\bigg).
}$$
We then define 
\begin{equation}
\xymatrix{
\bar{\eta}: \bar{U} \longrightarrow \Diff_{\Sigma^{0}_{k}}(\R_{+}\times(\R^{k-1}_{+}\times\R^{n}))^{c}
}
\end{equation}
by the formula
$$\bar{\eta}(g)(t, \bar{y}) = 
\begin{cases} \itemsep.2cm
(t, \; \;  \eta(g, t)(\bar{y})) &\quad \text{for $(t, \bar{y}) \in [0,1]\times(\R^{k-1}_{+}\times\R^{n})$,}\\
(t, \; \; \bar{y}) &\quad \text{for $(t, \bar{y}) \in (1, \infty)\times(\R^{k-1}_{+}\times\R^{n})$.}
\end{cases}
$$
Using properties i., ii., and iii. it follows that $\bar{\eta}$ is well defined and it satisfies the required conditions (a) and (b) given above. 
This completes the proof of the lemma. 
\end{proof}

\end{document}